\theoremstyle{plain}
\newtheorem{theorem}{Theorem}[section]
\newtheorem{lemma}[theorem]{Lemma}
\newtheorem{proposition}[theorem]{Proposition}
\theoremstyle{definition}
\newtheorem{definition}[theorem]{Definition}
\newtheorem{remark}[theorem]{Remark}
\newtheorem{example}[theorem]{Example}
\numberwithin{equation}{section}
\title{Ill-Posedness of the Third Order NLS Equation\\
with Raman Scattering Term}
\author{Nobu Kishimoto$\dagger$ and Yoshio Tsutsumi$\ddagger$}
\date{$\dagger$ RIMS, Kyoto University, Kyoto 606-8502, JAPAN \\
$\ddagger$ Department of Mathematics, Kyoto University, \\ Kyoto 606-8502, JAPAN}
\begin{document}

\maketitle

\begin{abstract}
We consider the ill-posedness and well-posedness of the Cauchy problem for the third order NLS equation with Raman scattering term on the one dimensional torus. It is regarded as a mathematical model for the photonic crystal fiber oscillator. Regarding the ill-posedness, we show the nonexistence of solutions in the Sobolev space and the norm inflation of the data-solution map under slightly different conditions, respectively. We also prove the local unique existence of solutions in the analytic function space.
\end{abstract}

\textit{Mathematics Subject Classification 2010} :  Primary 35Q55, 35Q53, Secondary 35A01, 35A10

% -----------------------------------------------------------------------------
%
% MAIN MATTER STARTS FROM HERE
%
% -----------------------------------------------------------------------------

\section{Introduction and Main Theorems}

In the present paper, we consider the ill-posedness of the Cauchy problem for the nonlinear Schr\"odinger equation with third order dispersion and intrapulse Raman scattering term (see (2.3.43) on page 40 of \cite{Agr}):
\begin{align}
     &\partial_t u  = \alpha_1 \partial_x^3 u + i \alpha_2 \partial_x^2 u + i\gamma_1 |u|^2 u +\gamma_2 \partial_x \bigl ( |u|^2 u \bigr ) - i\Gamma u \partial_x \bigl (|u|^2\bigr ),  \label{3NLSR} \\
    &\hskip 5.5cm t \in [-T, T], \quad x \in \mathbf{T} = \mathbf{R}/2 \pi \mathbf{Z},  \nonumber \\
   &u(0, x) = u_0(x), \qquad x \in \mathbf{T}, \label{ic}
\end{align}
where $\alpha_j$, $\gamma_j$ $(j = 1, 2)$ and $\Gamma$  are real constants and $T$ is a positive constant.
Throughout this paper, we assume that
\begin{equation}
\Gamma >0,\qquad \alpha_1 \neq 0,\qquad \frac{2\alpha_2}{3\alpha_1}\not\in \mathbf{Z}.  \label{h1}
\end{equation}
The last and the last but one terms on the right-hand side of \eqref{3NLSR} represent the effect of the intrapulse Raman scattering, which is not negligible for ultrashort optical pulses (see \cite [\S 2.3.2] {Agr}).
The well-posedness in the Sobolev space $H^s$ of the Cauchy problem \eqref{3NLSR} and \eqref{ic} without Raman scattering terms has been intensively studied by Miyaji and the second author \cite{MT1, MT2}.
It is showed that the Cauchy problems of \eqref{3NLSR} and the reduced equation relevant to \eqref{3NLSR} are well-posed in $H^s$, $s \geq 0$ and $s > -1/6$, respectively, in \cite{MT1} and \cite{MT2} (for the reduced equation, see \eqref{red3NLS} below).
For the Cauchy problem \eqref{3NLSR} and \eqref{ic} with the coefficient of the last term $u\partial _x(|u|^2)$ being real rather than imaginary, Takaoka~\cite{Tak} showed the well-posedness in Sobolev spaces $H^s$ for $s\ge 1/2$.
In the present paper, we show that the last term on the right-hand side of \eqref{3NLSR} causes the ill-posedness of the Cauchy problem \eqref{3NLSR} and \eqref{ic}.

Now we briefly explain how the last term on the right-hand side of \eqref{3NLSR} causes the ill-posedness.
We divide the Raman scattering term into the nonresonant and the resonant parts:
\begin{align*}
   \widehat{\bigl [ i u \partial_x |u|^2 \bigr ]} (k)
   &= - \frac{1}{2\pi} \sum_{\begin{subarray}{c} k = k_1 + k_2 + k_3 \\ k_1 + k_2 \neq 0 \end{subarray}} (k_1+k_2) \hat u(k_1) \hat {\bar u}(k_2) \hat u(k_3) \\
   &= - \frac{1}{2\pi} \sum_{(k_1+k_2)(k_2+k_3) \neq 0} - \frac{1}{2\pi} \sum_{\begin{subarray}{c} k_1 + k_2 \neq 0 \\ k_2 + k_3 = 0 \end{subarray}} =: \hat I_1 + \hat I_2,
\end{align*}
where $\hat f$ denotes the Fourier coefficient of $f$ in the $x$ variable (see \eqref{defn:fc} below for the precise definition).
Here, we note that $\hat I_1$ is the nonresonant part and $\hat I_2$ is the resonant part.
We can rewrite $\hat I_2$ as follows.
\begin{align*}
   2\pi \hat I_2 = &- k \hat u(k) \sum_{\begin{subarray}{c} k_2 \neq -k \\ k_2 + k_3 = 0 \end{subarray}} \hat{\bar u}(k_2) \hat u(k_3) %\\
   %&
- \hat u(k) \sum_{\begin{subarray}{c} k_2 \neq -k \\ k_2 + k_3 = 0 \end{subarray}} k_2 \hat{\bar u}(k_2) \hat u(k_3) \\
   = & - k \hat u(k) \sum_{k_2 \in \mathbf{Z}} \hat{\bar u}(k_2) \hat u(-k_2) + k \hat u(k) \hat{\bar u}(-k) \hat u(k) \\
   &- \hat u(k) \sum_{k_2 \in \mathbf{Z}} k_2 \hat{\bar u}(k_2) \hat u(-k_2) - k \hat u(k) \hat{\bar u}(-k) \hat u (k) \\
   = & - k \hat u(k) \sum_{k_2 \in \mathbf{Z}} |\hat u(k_2)|^2 + \hat u(k) \sum_{k_2 \in \mathbf{Z}} k_2 |\hat u(k_2)|^2 \\
   =&  -k \| u \|_{L^2}^2 \hat u(k) + \hat u(k) \sum_{k_2 \in \mathbf{Z}} k_2 |\hat u(k_2)|^2.
\end{align*}
At the last equality but one, we have used the fact that  $\hat u (-k) = \bar{\hat{\bar u}}(k)$.
Therefore, we obtain
\[
   I_2 = \frac{i}{2\pi} \| u \|_{L^2}^2 \partial_x u + \frac{1}{2\pi} \Bigl ( \sum_{k_2 \in \mathbf{Z}} k_2 |\hat u(k_2)|^2 \Bigr ) u.
\]
Hence, since the $L^2$ norm is conserved (see Lemma \ref{lem:L2law} in \S 2), the equation \eqref{3NLSR} can be rewritten as follows:
\begin{align}
     \partial_t u +& i a  \partial_x u = \alpha_1 \partial_x^3 u + i \alpha_2 \partial_x^2 u + i \gamma_1 |u|^2 u + \ i \gamma_2 \partial_x \bigl ( |u|^2 u \bigr )  \label{3NLSR2} \\
     &+ \frac{\Gamma}{(2\pi )^{3/2}} \sum_{k \in \mathbf{Z}} e^{-ikx} \sum_{(k_1+k_2)(k_2+k_3) \neq 0} (k_1 + k_2) \hat u(k_1) \hat {\bar u}(k_2) \hat u(k_3)  \notag \\
     &- \frac{\Gamma}{2\pi} \bigl ( \sum_{k_2 \in \mathbf{Z}} k_2 |\hat u(k_2)|^2 \bigr ) u,  \qquad t \in [-T, T], \  x \in \mathbf{T}, \notag 
\end{align}
where
\[
     a = \frac{\Gamma}{2\pi}  \|u_0\|_{L^2}^2.
\]
Consequently, the Cauchy-Riemann type elliptic operator $\partial_t + i a \partial_x$ appears due to the Raman scattering term.
On the other hand, $I_1$ can be estimated in $H^s$ for $s \geq 1/2$ (see Bourgain \cite[Section 8.I]{Bour1}).
This observation suggests that the Cauchy problem \eqref{3NLSR} and \eqref{ic} should be ill-posed.

Before stating the main theorems in this paper, we define the solution of \eqref{3NLSR} and \eqref{ic}.

\begin{definition} \label{def}
Let $T>0$, $s\ge 0$, and $u_0\in H^s(\mathbf{T})$.
We say $u$ is a solution to the Cauchy problem \eqref{3NLSR} and \eqref{ic} on $[0,T)$ if $u$ satisfies
\[ u\in L^\infty _{loc}([0,T);H^s(\mathbf{T}))\cap L^3_{loc}([0,T)\times \mathbf{T}),\quad \partial _x(|u|^2)\in L^1_{loc}([0,T);H^{-s}(\mathbf{T})),\]
and if \eqref{3NLSR} and \eqref{ic} hold in the sense of distribution; \mbox{i.e.},
\begin{align*}
&-\int _0^T\int _{\mathbf{T}}u\partial _t\phi \,dx\,dt -\int _{\mathbf{T}}u_0\phi (0,\cdot )\,dx \\
&\quad =\int _0^T\int _{\mathbf{T}}\big [ u\bigl ( -\alpha_1 \partial _x^3\phi +i\alpha_2 \partial _x^2\phi \bigr )+|u|^2u\bigl ( i\gamma_1 \phi -\gamma_2 \partial _x\phi \bigr ) \bigr ] \,dx\,dt\\
&\qquad -i\Gamma \int _0^T\bigl \langle \partial _x\bigl (|u|^2\bigr ) (t)\,,\,u(t)\phi (t)\bigr \rangle _{H^{-s},H^s}\,dt
\end{align*}
for any $\phi \in C^\infty _0([0,T)\times \mathbf{T})$.
We say $u$ is a solution on a closed interval $[0,T]$ if $u\in C([0,T];H^s(\mathbf{T}))$ for some $s>1/2$ and $u|_{[0,T)}$ is a solution on $[0,T)$ in the above sense.
A solution on $(-T,0]$, $[-T,0]$ is defined in a similar manner.
\end{definition}

\begin{remark}\label{remsol}
(a) If $u(t,x)$ is a solution to \eqref{3NLSR} and \eqref{ic} on $[0,T)$, then $\overline{u(-t,x)}$ is a solution on $(-T,0]$ to \eqref{3NLSR} and \eqref{ic} with $(\alpha_1,\gamma_2)$ replaced by $(-\alpha_1,-\gamma_2)$.

(b) Let $u$ be a solution on $[0,T)$ to the Cauchy problem \eqref{3NLSR} and \eqref{ic} with $u_0\in H^s(\mathbf{T})$. 
If $s>1/2$ and $u\in C([0,T);H^s(\mathbf{T}))$, then one can show by the Sobolev inequalities that $u\in C^1([0,T);H^{s-3}(\mathbf{T}))$ and \eqref{3NLSR} is satisfied in $C([0,T);H^{s-3}(\mathbf{T}))$, while \eqref{ic} is verified in $H^s(\mathbf{T})$.
In particular, $\hat{u}(\cdot ,k)\in C^1([0,T))$ for any $k\in \mathbf{Z}$ and we have
\begin{align}
\begin{split}
\partial _t\hat{u}(t,k)
=& -i(\alpha_1 k^3+\alpha_2 k^2)\hat{u}(t,k) \\
&+ \sum _{k=k_1+k_2+k_3}\frac{i\gamma_1 +i\gamma_2 k+\Gamma (k_1+k_2)}{2\pi}\hat{u}(t,k_1)\hat{\bar{u}}(t,k_2)\hat{u}(t,k_3),
\end{split}\label{hateq} \\
\hat{u}(0,k)=&\; \hat{u}_0(k)\label{hatic}
\end{align}
for any $k\in \mathbf{Z}$, where the summation on the right-hand side of \eqref{hateq} converges absolutely and uniformly in $t$ on any compact subinterval of $[0,T)$.

(c) By (b) and the continuity at $t=T$, a solution $u$ on a closed interval $[0,T]$ satisfies $\hat{u}(\cdot ,k)\in C^1([0,T])$ for any $k\in \mathbf{Z}$ and \eqref{hateq}, \eqref{hatic} for any $(t,k)\in [0,T]\times \mathbf{Z}$.
\end{remark}

We have the following two theorems concerning the ill-posedness of the Cauchy problem \eqref{3NLSR} and \eqref{ic}.

\begin{theorem} \label{ip0} 
We assume that \eqref{h1} holds.
For any $s\ge 1$, there exists $u_0\in H^s (\mathbf{T})$ such that for \emph{no} $T>0$ the Cauchy problem \eqref{3NLSR} and \eqref{ic} has a solution $u\in C([0,T];H^s(\mathbf{T}))$ on $[0,T]$, or a solution $u\in C([-T,0];H^s(\mathbf{T}))$ on $[-T,0]$.
\end{theorem}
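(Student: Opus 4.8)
The plan is to argue by contradiction on the Fourier side, exhibiting the exponential amplification of positive frequencies produced by the resonant part of the Raman term. Suppose that for some $T>0$ a solution $u\in C([0,T];H^s(\mathbf{T}))$ exists; by Remark \ref{remsol}(c) each $\hat u(\cdot ,k)$ is $C^1$ and solves \eqref{hateq}. First I would extract from the cubic sum the diagonal (resonant) configurations $k_1=k,\ k_2+k_3=0$ and $k_3=k,\ k_1+k_2=0$, which collapse to a multiple of $\hat u(t,k)$ exactly as in the computation of $I_2$ in the Introduction. The Raman factor $\Gamma(k_1+k_2)$ then contributes the term $ak\,\hat u(t,k)$ with $a=\tfrac{\Gamma}{2\pi}\|u_0\|_{L^2}^2>0$, together with the \emph{real} but bounded term $-\tfrac{\Gamma}{2\pi}M(t)\hat u(t,k)$, where $M(t):=\sum_{j}j\,|\hat u(t,j)|^2$ satisfies $|M(t)|\le \|u(t)\|_{H^{1/2}}^2\le C$; the dispersion and the $\gamma_1,\gamma_2$ resonances contribute only purely imaginary multiples of $\hat u(t,k)$. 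Thus \eqref{hateq} takes the scalar form
\begin{equation*}
\partial_t\hat u(t,k)=\Big(ak-\tfrac{\Gamma}{2\pi}M(t)+i\theta(t,k)\Big)\hat u(t,k)+R_k(t),\qquad \theta\ \text{real},
\end{equation*}
where $R_k(t)$ is the genuinely nonresonant sum over $(k_1+k_2)(k_2+k_3)\neq 0$.

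Solving this scalar linear ODE by Duhamel and using that $\exp(-\tfrac{\Gamma}{2\pi}\int_0^t M)$ is bounded above and below uniformly in $k$, I would obtain for $k>0$ a lower bound
\begin{equation*}
|\hat u(t,k)|\ \ge\ c\,e^{akt}\,|\hat u_0(k)|-\int_0^t e^{ak(t-\tau)}\,|R_k(\tau)|\,d\tau .
\end{equation*}
If the remainder can be shown negligible against the first term (see below), this says that the $k$-th coefficient is amplified by $e^{akt}$, which for non-analytic data is incompatible with membership in $H^s$ at any positive time. I would therefore choose $u_0\in H^s(\mathbf{T})$ real, with $\hat u_0$ supported on a symmetric lacunary set $\{\pm N_j\}$ with large gaps and $\hat u_0(\pm N_j)\sim N_j^{-s-1}$: this datum lies in $H^s$, the lacunarity is also used to suppress nonresonant feedback at the distinguished frequencies, and for every $t>0$ one gets
\begin{equation*}
\|u(t)\|_{H^s}^2\ \gtrsim\ \sum_j N_j^{2s}\,e^{2aN_jt}\,N_j^{-2s-2}\ =\ \sum_j e^{2aN_jt}N_j^{-2}\ =\ \infty,
\end{equation*}
the desired contradiction.

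The heart of the argument, and the step I expect to be the main obstacle, is to show that $R_k$ cannot cancel the amplification: the nonresonant factor $\Gamma(k_1+k_2)=\Gamma(k-k_3)$ is of size comparable to $ak$, so a priori the nonlinearity could feed the growth back into the mode $k$. This is exactly where hypothesis \eqref{h1} enters. Passing to the interaction picture $v(t,k)=e^{i(\alpha_1 k^3+\alpha_2 k^2)t}\hat u(t,k)$, every nonresonant term in $R_k$ acquires the oscillation $e^{it\Phi}$ with resonance function
\begin{equation*}
\Phi=(k_1+k_2)(k_2+k_3)\bigl[\,3\alpha_1(k_1+k_3)+2\alpha_2\,\bigr];
\end{equation*}
since $\tfrac{2\alpha_2}{3\alpha_1}\notin\mathbf{Z}$, the bracket never vanishes for integer $k_1+k_3$, so $\Phi\neq 0$ and in fact $|\Phi|\gtrsim |k_1+k_2||k_2+k_3|$ on the nonresonant set. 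An integration by parts in $\tau$ (a normal-form reduction), legitimate because the standing assumption $u\in C([0,T];H^s)$ keeps all $\hat u(t,k)$ bounded, then trades the loss $\Gamma(k_1+k_2)$ for $|\Phi|^{-1}\lesssim |k_1+k_2|^{-1}|k_2+k_3|^{-1}$, turning $R_k$ into a smoothing, genuinely lower-order contribution; combined with the lacunarity of the support, which keeps the nonresonant feedback at the distinguished frequencies under control, this yields the bound $\int_0^t e^{ak(t-\tau)}|R_k(\tau)|\,d\tau\le \tfrac12 c\,e^{akt}|\hat u_0(k)|$ needed at $k=N_j$.

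Finally, the assertion for $[-T,0]$ follows by applying the same analysis to $\overline{u(-t,x)}$: by Remark \ref{remsol}(a) this function solves \eqref{3NLSR} and \eqref{ic} with $(\alpha_1,\gamma_2)$ replaced by $(-\alpha_1,-\gamma_2)$, a system that still obeys \eqref{h1} (as $-\tfrac{2\alpha_2}{3\alpha_1}\notin\mathbf{Z}$) and carries the same $\Gamma>0$, hence the same positive-frequency amplification with the same rate $a$. Choosing $u_0$ real makes the single datum above fail to launch a solution in either time direction.
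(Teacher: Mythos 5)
Your overall scheme is essentially the paper's: reduce to the Fourier ODE \eqref{hateq}, isolate the resonant Raman contribution producing the factor $e^{akt}$ with $a=\frac{\Gamma}{2\pi}\|u_0\|_{L^2}^2$ (the paper secures the constancy of $a$ via the $L^2$ law, Lemma~\ref{lem:L2law}), remove the nonresonant sum by a normal form using $\Phi\neq 0$ under \eqref{h1}, choose lacunary data, and treat $[-T,0]$ through $\overline{u(-t,x)}$. Indeed the paper's identity \eqref{eq:hat-w2} is exactly your Duhamel formula solved for $\hat u_0(k)$, and its estimate \eqref{est:hat-w} is the contrapositive of your lower bound. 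But two of your steps genuinely fail as written. First, the data exponent $\hat u_0(\pm N_j)\sim N_j^{-s-1}$ is precisely the borderline case in which the absorption cannot close. The best available bounds on the nonresonant remainder (Lemma~\ref{lem:multilinear}: $\langle k\rangle ^s$ decay for $F_{3,1}$ and $H$, $\langle k\rangle ^{s+1}$ decay for the boundary term $G$, with constants involving $E_s^+(T)=\sup_t\|u(t)\|_{H^s}$, which is finite under the contradiction hypothesis but in no way small) give, after the integration by parts, $\int_0^te^{ak(t-\tau)}|R_k(\tau)|\,d\tau\lesssim e^{akt}\langle k\rangle ^{-s-1}C(E_s^+(T),a^{-1})$ --- the same size $e^{akt}N_j^{-s-1}$ as your main term, with an uncontrolled constant, so the claimed bound by $\tfrac12 c\,e^{akt}|\hat u_0(k)|$ has no justification. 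Lacunarity cannot rescue this: the solution does not stay supported on the lacunary set for $t>0$, so it cannot suppress $R_k$ at later times; the paper never uses lacunarity in the estimates, only to place $u_0$ in $H^s$. The repair is the paper's choice \eqref{def:u_0}, $\hat u_0(\pm 2^j)=\langle 2^j\rangle ^{-s_0}$ with $s<s_0<s+1$ strictly: then the main term $e^{akt}k^{-s_0}$ dominates $e^{akt}k^{-s-1}$ for large $k$; equivalently, the paper avoids lower bounds altogether by deducing from \eqref{eq:hat-w2} (where $e^{-akt}$ \emph{decays} for $k>0$) the necessary decay $\sup_{k>0}\langle k\rangle ^{s+1}|\hat u_0(k)|<\infty$ --- a condition your data actually \emph{satisfies}, so no contradiction can be extracted from it.

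Second, your blanket normal form over the entire nonresonant set fails in the comparable-frequency region. For example $(k_1,k_2,k_3)=(k+1,-k,k-1)$ is nonresonant with $|\Phi|\sim \langle k\rangle$, while the multiplier $|i\gamma_2k+\Gamma(k_1+k_2)|\sim \langle k\rangle$, so the ratio gains nothing; worse, when $\partial_t$ falls on a factor $\hat w$ in the integrated-by-parts term, the estimate \eqref{est:w_t} costs one derivative, making the outcome strictly worse than the original term. Your bound $|\Phi|\gtrsim |k_1+k_2||k_2+k_3|$ is true but insufficient here. This is exactly why the paper decomposes $D(k)=D_1(k)\cup D_2(k)$: on $D_1$ (where $\tfrac14|k_2|\le |k_1|,|k_3|\le 4|k_2|$) it does \emph{not} integrate by parts but estimates $F_{3,1}$ directly, distributing the derivative loss as $\langle k_1\rangle ^{\frac12}\langle k_2\rangle ^{\frac12}\langle k_3\rangle ^s$; only on $D_2$, where \eqref{est:D2} gives $|\Phi|\gtrsim \max_j\langle k_j\rangle ^2$, is the normal form applied. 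With these two repairs --- the strict exponent $s_0\in(s,s+1)$ and the $D_1/D_2$ split --- your argument becomes the paper's proof of Theorem~\ref{ip}; your treatment of the backward direction via Remark~\ref{remsol} and symmetric real data is sound and matches the paper's.
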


\begin{remark}
In fact, we can show the nonexistence of solutions in a larger class; i.e., in $C_tH^{s_1}_x$ for some $s_1<s$.
Moreover, a slightly weaker nonexistence result holds even for some $C^\infty$ initial data.
See Theorem~\ref{ip} and Theorem~\ref{ip+} below for the precise statements.
\end{remark}

\begin{remark}
In the case of $\mathbf{R}^n$, the Cauchy problem of the semilinear Schr\"odinger equation is well-posed in regular Sobolev spaces (see Hayashi and Ozawa \cite{HO} and Chihara \cite{Ch1} for the one dimensional case and see Chihara \cite{Ch2} for the higher dimensional case).
The same is true of the third order NLS with Raman scattering term (see Staffilani \cite{St}).
It is in sharp contrast to our case of $\mathbf{T}$.
The difference between the cases of $\mathbf{R}$ and $\mathbf{T}$ is that the spectrum of the Laplacian is continuous in the former case, while it is discrete in the latter case.
\end{remark}

\begin{remark}\label{rem:NLS}
The same nonexistence result as Theorem \ref{ip0} holds for the equation \eqref{3NLSR} with $\alpha_1=0$ (see Proposition \ref{prop:NLS} below).
\end{remark}

\begin{theorem}\label{infl0} 
We assume that \eqref{h1} holds.
Then, for any $s\ge 1$, inflation of the $H^s$ norm occurs around any $H^s$ solution in the following sense:
Let $u^*\in C([0,T];H^s(\mathbf{T}))$ be a solution to \eqref{3NLSR} on $[0,T]$ for some $T>0$.
Then, for any $\varepsilon >0$ and $0<\tau \le T$ there exists a real analytic function $\phi$ on $\mathbf{T}$ with $\| \phi \|_{H^s}\le \varepsilon$ such that either there does not exist a solution $u$ to \eqref{3NLSR} on $[0,\tau ]$ with the initial condition $u(0)=u^*(0)+\phi$ in the class $C([0,\tau ];H^s(\mathbf{T}))$, or such a solution exists but 
\[ \sup _{t\in [0,\tau ]}\| u(t)-u^*(t)\| _{H^s}\ge \varepsilon ^{-1}.\]
\end{theorem}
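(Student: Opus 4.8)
The plan is to argue by contradiction. Fix $\varepsilon>0$ and $0<\tau\le T$, let $u^*\in C([0,T];H^s(\mathbf{T}))$ be the given solution, and suppose that for the perturbation $\phi$ constructed below a solution $u\in C([0,\tau];H^s(\mathbf{T}))$ of \eqref{3NLSR} with $u(0)=u^*(0)+\phi$ does exist and satisfies $\sup_{t\in[0,\tau]}\|u(t)-u^*(t)\|_{H^s}<\varepsilon^{-1}$. Then
\[ B:=\sup_{t\in[0,\tau]}\|u(t)\|_{H^s}\le \|u^*\|_{C([0,\tau];H^s)}+\varepsilon^{-1} \]
is finite and, crucially, independent of the large frequency $N$ chosen below. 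I would work from the mode equations \eqref{hateq} together with the $L^2$ conservation law (Lemma~\ref{lem:L2law}): the latter keeps $a=\frac{\Gamma}{2\pi}\|u(0)\|_{L^2}^2$ constant in time, so that, as in the reduction leading to \eqref{3NLSR2}, the resonant part of the Raman term contributes to the $k$-th mode the \emph{real} diagonal rate $ak-b(t)$, where $b(t)=\frac{\Gamma}{2\pi}\sum_{k'}k'|\hat u(t,k')|^2$ is real with $|b(t)|\lesssim B^2$. With $\Gamma>0$ this rate is positive and proportional to $k$ for $k>0$: this is the instability to be excited.

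For the data I would take $\phi=\phi_{\mathrm{lo}}+\phi_{\mathrm{hi}}$, a trigonometric polynomial (hence real analytic) with $\|\phi\|_{H^s}\le\varepsilon$. The low part $\phi_{\mathrm{lo}}$ is supported on finitely many low frequencies and is chosen so that $\|u^*(0)+\phi\|_{L^2}>0$ with a lower bound depending only on $u^*$ and $\varepsilon$; this forces $a\ge a_0>0$ uniformly, even when $u^*\equiv0$ (there $a_0\sim\|\phi\|_{L^2}^2$ is small but positive). The high part $\phi_{\mathrm{hi}}=c_Ne^{iNx}$ is a single mode at a large \emph{positive} frequency $N$ with $|c_N|\sim\delta N^{-s}$, so that $\|\phi_{\mathrm{hi}}\|_{H^s}\sim\delta$ is small while $u^*$, being only $H^s$, has negligible Fourier content near frequency $N$.

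The quantity I would control is a high-frequency energy $E(t)=\sum_k w(k)(1+k^2)^s|\hat u(t,k)|^2$, with $w$ a smooth weight equal to $1$ for $k\ge N$ and supported in $k\ge N/2$, so that $E(0)=\delta^2(1+o(1))$ as $N\to\infty$. Differentiating and using \eqref{hateq}, the diagonal rate contributes $2\sum_k(ak-b(t))w(k)(1+k^2)^s|\hat u(k)|^2\ge(aN-2|b|)E\ge\frac{aN}{2}E$ for $N$ large, since $a>0$ and $w$ is supported in $k\ge N/2$. The main obstacle is the remaining non-resonant quartic flux: its coefficients grow like the output frequency, so a crude bound is of order $N$ and competes with $\frac{aN}{2}E$. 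As in the proof of the nonexistence Theorem~\ref{ip0}, I would tame it by a normal form: integrating the energy identity in time and writing each factor in the interaction representation exposes an oscillatory factor $e^{is\Phi}$, $\Phi=\omega(k)-\omega(k_1)+\omega(-k_2)-\omega(k_3)$ with $\omega(k)=\alpha_1k^3+\alpha_2k^2$, whose integration by parts gains a factor $\Phi^{-1}$. On the dominant region, with one frequency $\sim N$ and the others low, $\Phi\approx 3\alpha_1N^2(k_2+k_3)$, so $|\Phi|\gtrsim N^2$ on the non-resonant set $\{k_2+k_3\ne0\}$; hypothesis \eqref{h1}, in particular $\frac{2\alpha_2}{3\alpha_1}\notin\mathbf{Z}$, rules out the remaining resonances among comparable frequencies and yields a uniform lower bound for $\Phi$. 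Controlling in addition the commutator flux produced by the cutoff $w$ and the time-derivative terms generated by the normal form is where the bulk of the work lies.

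Granting these estimates, the integrated inequality takes the form
\[ E(t)\ge E(0)+cN\int_0^tE(s)\,ds-R_N,\qquad R_N\to0\ (N\to\infty), \]
for some $c>0$, whence Gronwall's inequality gives $E(t)\ge(E(0)-R_N)e^{cNt}\ge\frac{\delta^2}{4}e^{cNt}$ on $[0,\tau]$ for $N$ large. As $\delta$ is fixed, the first time $t^*$ with $E(t^*)=(\varepsilon^{-1}+\|u^*\|_{C([0,\tau];H^s)})^2$ obeys $cNt^*=O(1)$, hence $t^*<\tau$ once $N$ is large. Since $u^*\in H^s$ satisfies $(\sum_kw(k)(1+k^2)^s|\hat u^*(t,k)|^2)^{1/2}\le\|u^*\|_{C([0,\tau];H^s)}$, at $t^*$ the triangle inequality yields
\[ \|u(t^*)-u^*(t^*)\|_{H^s}\ge \sqrt{E(t^*)}-\|u^*\|_{C([0,\tau];H^s)}\ge\varepsilon^{-1}, \]
contradicting the standing hypothesis and proving the theorem. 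The construction of $\phi_{\mathrm{lo}}$ guarantees $a\ge a_0>0$ for every given $u^*$, so the case $u^*\equiv0$ is covered as well; there $a_0$ is small and $N$ must be taken correspondingly large, but the argument is otherwise identical.
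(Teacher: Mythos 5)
Your proposal is a genuinely different route from the paper's. The paper never propagates growth forward in time. Instead it integrates the reduced mode equation \eqref{eq:hat-w} \emph{backward}, writing $\hat u_0(k)$ in terms of $\hat w(T,k)$ as in \eqref{eq:hat-w2}, so that for $k>0$ the exponential $e^{-akt}\le 1$ works in its favor and the factor $\int_0^T e^{-akt}\,dt\lesssim (ak)^{-1}$ absorbs the derivative loss; together with the normal form bounds of Lemma~\ref{lem:multilinear} this yields the a priori estimate \eqref{est:hat-w}, i.e.\ a \emph{necessary smoothness condition} $\langle k\rangle ^{s+1}|\hat u_0(k)|\lesssim R^s\langle \| u_0\| _{L^2}^{-2}\rangle \langle T^{-1}\rangle \langle E_1^+\rangle ^4E_s^+$ on the positive modes of any datum admitting a bounded $H^s$ solution. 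Theorem~\ref{infl0} then follows (via Theorem~\ref{infl+}~(ii), after reducing to $2\varepsilon \le \| u^*(0)\| _{L^2}$ when $u^*(0)\neq 0$, which is legitimate because shrinking $\varepsilon$ strengthens the conclusion) by applying \eqref{est:hat-w} at the single mode $k=k_0$ to both $u$ and $u^*$ and letting $k_0\to \infty$: boundedness of $\sup _t\| u(t)\| _{H^s}$ would contradict $\langle k_0\rangle ^{s+1}|\hat\phi (k_0)|\sim \varepsilon \langle k_0\rangle \to \infty$. Notably, your perturbation $\phi =\phi _{\mathrm{lo}}+c_Ne^{iNx}$ is essentially the paper's $\phi _{\varepsilon ,\tau}$, and your use of $L^2$ conservation, the resonant extraction of the rate $ak-b(t)$, and the normal form with $|\Phi |\gtrsim N^2$ on the nonresonant set all match the paper's machinery; what differs is the functional (a high-frequency energy versus a single Fourier coefficient of $u_0$) and the direction of the estimate. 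Your forward route, if completed, proves more (actual exponential growth of the solution), whereas the paper's backward route is much lighter and in addition yields the nonexistence results of Section~\ref{sec:il} and, in Theorem~\ref{infl+}, perturbations independent of $u^*$.

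That said, as written your argument has a concrete gap precisely at the step you defer. First, all of the paper's trilinear bounds (Lemma~\ref{lem:multilinear}) are mode-wise $\sup _k$ ($\ell ^\infty _k$) estimates, which suffice for a single mode but not for your energy $E(t)=\sum _kw(k)\langle k\rangle ^{2s}|\hat u(t,k)|^2$: you need square-summable-in-$k$ versions of the bounds on $G$, $H$ and the flux (plausible via $\ell ^2*\ell ^1*\ell ^1\to \ell ^2$ convolution estimates, but they must be proved, together with the weight-commutator control). Second, the asserted inequality $E(t)\ge E(0)+cN\int _0^tE\,ds-R_N$ with $R_N\to 0$ is stronger than what the estimates can give: the $H$-type corrections in which $\partial _t$ falls on a high mode carry no negative power of $N$ (compare \eqref{est:H}, which gains nothing in $\langle k\rangle$), so they produce a time-integrated error of size $C_B\,t$ that neither vanishes as $N\to \infty$ nor is proportional to $E$, and since $C_B\sim \varepsilon ^{-O(1)}$ can dwarf $E(0)\sim \delta ^2$, your Gronwall step fails in the stated form. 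The repair is routine — comparing with $G(t)-C_B/(cN)$, where $G(t):=E(0)-R_N+\int _0^t(cNE-C_B)\,ds$, gives $E(t)\ge \tfrac{\delta ^2}{8}e^{cNt}$ once $N$ is large enough that $C_B/(cN)\le \delta ^2/8$ — but without this modification, and without the $\ell ^2$ upgrades, the central inequality on which your contradiction rests is unproved. The paper's choice to estimate one Fourier mode backward in time is exactly what sidesteps both issues.
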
 

\begin{remark}
From Theorem \ref{ip0}, it seems impossible to solve the Cauchy problem \eqref{3NLSR} and \eqref{ic} in Sobolev spaces.
But there is still a chance that the Cauchy problem \eqref{3NLSR} and \eqref{ic} is solvable for a class of $C^\infty$ initial data.
Even if it is the case, Theorem \ref{infl0} shows that the solution map$:u_0 \mapsto u$ is discontinuous everywhere, which implies the continuous dependence of solutions on initial data breaks down in Sobolev spaces.
\end{remark}

\begin{remark}
Similarly to the nonexistence result, we can in fact show inflation of the $H^{s_1}$ norm for some $s_1<s$.
See Theorem~\ref{infl} and Theorem~\ref{infl+} below for the precise statements.
\end{remark}

\begin{remark} \label{af}
A large number of numerical simulations for the Cauchy problem \eqref{3NLSR}--\eqref{ic} have been made though it is ill-posed in Sobolev spaces (see, e.g., \cite{Agr}).
In those numerical computations, such analytic functions as Gaussian and super-Gaussian pulses are chosen as initial data.
So, it is natural to expect that the Cauchy problem \eqref{3NLSR}--\eqref{ic} should be solvable in the analytic function space.
Indeed, we describe the result on the unique solvability in the analytic function space in Section 4 (see Proposition \ref{prop:analytic} below).
\end{remark}

There are many papers concerning the well-posedness issue for the Cauchy problem of nonlinear dispersive equations (see, e.g., \cite{Bour1}, \cite{Chr2}, \cite{CCT}, \cite{ErdTz}, \cite{GKO}, \cite{GO}, \cite{KPV1, KPV2}, \cite{KTzv}, \cite{KwOh}, \cite{MT1, MT2}, \cite{Mol}, \cite{MPV}, \cite{NTT}, \cite{St}, \cite{Tak}, \cite{TT} and \cite{Tzv}).
For the well-posedness of linear Schr\"odinger equations, Mizohata \cite{Miz} and Chihara \cite{Ch} studied necessary and sufficient conditions in the cases of $\mathbf{R}^n$ and $\mathbf{T}^n$, respectively.
In \cite{Ch}, Chihara also treated the ill-posedness of the nonlinear Schr\" odinger equation.
These works on linear equations give deep insight to nonlinear dispersive equations.
On the other hand, in the nonlinear case, a linearized equation can not determine all properties of the original nonlinear equation.
Indeed, the Cauchy-Riemann type operator on the left-hand side of \eqref{3NLSR2} does not immediately imply the ill-posedness of \eqref{3NLSR} and \eqref{ic}.
This is because the singularity caused by the nonlinearity might cancel out the one appearing in the Cauchy-Riemann type operator.
Therefore, we need to estimate the balance between the singularities to which the nonlinearity and the Cauchy-Riemann type operator give rise.
For that purpose, we use the smoothing type effect for the cubic nonlinearity of such nonlinear dispersive equations as the mKdV, the NLS and the third order NLS equations on the one dimensional torus (see, e.g., \cite{ErdTz}, \cite{GKO}, \cite{GO} and \cite{Mol} for the NLS equation, \cite{KwOh}, \cite{MPV}, \cite{NTT} and \cite{TT} for the mKdV equation, and \cite{MT1, MT2} for the third order NLS).
The estimate based on the smoothing type effect enables us to show that the singularity coming from the Cauchy-Riemann type operator is dominant over the one caused by the nonlinearity.

We should here make a remark on whether or not we can recover the well-posedness for the reduced equation relevant to \eqref{3NLSR}.
This is because for the well-posedness issue, we often consider the reduced equation, which is derived from the elimination of bad terms from the original equation (see, e.g., \cite{Bour1}, \cite{KPV1}, \cite{KwOh} and \cite{MT2}).
If we put 
\[
   v(t, x) = u\bigl ( t, x - \frac {\gamma_2} {\pi} \int_0^t \|u(s)\|_{L^2}^2 \, ds \bigr ) e^{-\frac {\gamma_1} \pi i \int_0^t \|u(s)\|_{L^2}^2 \, ds},
\]
then the reduced equation can be formally written as follows.
\begin{align}
   \partial_t v  + i a \partial_x v =&\; \alpha_1 \partial_x^3 v + i \alpha_2 \partial_x^2 v +i \gamma_1 \bigl ( |v|^2 - \frac 1 \pi \| v(t) \|_{L^2}^2  \bigr ) v \label{red3NLS} \\
   &+ \gamma_2 \bigl [ 2 \bigl ( |v|^2 - \frac 1 {2 \pi} \| v(t) \|_{L^2}^2  \bigr ) \partial_x v + v^2 \partial_x \bar v \bigr ]    \nonumber \\
   &+ \frac{\Gamma}{(2\pi )^{3/2}} \sum_{k \in \mathbf{Z}} e^{-ikx} \hspace{-10pt}\sum_{(k_1+k_2)(k_2+k_3) \neq 0} \hspace{-10pt}(k_1 + k_2) \hat v(k_1) \hat {\bar v}(k_2) \hat v(k_3) \notag \\
   &- \frac{\Gamma}{2\pi} \bigl ( \sum_{k_2 \in \mathbf{Z}} k_2 |\hat v(k_2)|^2 \bigr ) v, \qquad t \in [-T, T], \quad x \in \mathbf{T}.  \nonumber
\end{align}
On the left-hand side of \eqref{red3NLS}, the Cauchy-Riemann type operator appears and so Theorem \ref{ip0} also holds for \eqref{red3NLS}.
In fact, we use \eqref{red3NLS} to prove the ill-posedness (see \eqref{eq:hat-w} and Remark \ref{redeq} below).

The plan of this paper is as follows.
In Section 2,  we give several lemmas needed for the proofs of Theorems \ref{ip0} and \ref{infl0} and prove Theorem \ref{ip0}.
We also show that Theorem \ref{ip0} holds for the Schr\" odinger equation with derivative nonlinearity, that is, for the case of $\alpha_1 = 0$ (see Proposition \ref{prop:NLS} below).
In Section 3, we give the proof of Theorem \ref{infl0}.
Finally, in Section 4, we describe the unique local solvability of the Cauchy problem \eqref{3NLSR} and \eqref{ic} in the analytic function space (see Proposition \ref{prop:analytic} below).

We conclude this section with notation given.
We use the following definition of the Fourier coefficients of functions on $\mathbf{T}$: 
\begin{equation}\label{defn:fc}
\hat{f}(k):= \frac 1 {\sqrt{2 \pi}} \int _{\mathbf{T}}e^{-ikx}f(x)\,dx,\qquad k\in \mathbf{Z},
\end{equation}
so that for suitable functions $f$ and $g$ on $\mathbf{T}$ we have
\begin{gather*}
f(x)=\frac{1}{\sqrt{2\pi}}\sum _{k\in \mathbf{Z}}\hat{f}(k)e^{ikx},\qquad \| f\| _{L^2(\mathbf{T})}^2 = \| \hat{f}\| _{\ell ^2(\mathbf{Z})}^2,\\
\widehat{\partial _xf}(k)=ik\hat{f}(k),\qquad \widehat{fg}(k)=\frac{1}{\sqrt{2\pi}}\sum _{l\in \mathbf{Z}}\hat{f}(k-l)\hat{g}(l).
\end{gather*}
The Sobolev norms are defined as $\| f\|_{H^s}:=\| \langle \,\cdot\,\rangle ^s\hat{f}\| _{\ell ^2(\mathbf{Z})}$, where $\langle \xi \rangle :=1+|\xi |$ for $\xi \in \mathbf{R}$.
We define the operator $P_{\pm}$ on $L^2(\mathbf{T})$ by
\[ P_\pm f(x):=\frac{1}{\sqrt{2\pi}}\sum _{k\in \mathbf{Z};\,\pm k>0}\hat{f}(k)e^{ikx}.\]
We denote by $X\lesssim Y$ the estimate $X\le CY$ with a harmless constant $C>0$.
Finally, we write $x+$ (resp.~$x-$) to denote a slightly bigger (resp.~smaller) number than a given $x\in \mathbf{R}$.

%%%%%%%%%%%%%%%%%%%%%%%%%%%%%%%%%%%%%%%%%%%%%
%
%     Proof of Ill-Posedness
%
%%%%%%%%%%%%%%%%%%%%%%%%%%%%%%%%%%%%%%%%%%%%

\section{Nonexistence of $H^s$ solutions}\label{sec:il}

In this section, we shall prove the following two theorems, which imply Theorem \ref{ip0} as a special case.

\begin{theorem} \label{ip} We assume that \eqref{h1} holds.
Let real numbers $s, s_1$ satisfy
\[ 1\le s_1\le s <s_1+1.\]
Then, there exists $u_0\in H^s (\mathbf{T})$ such that for \emph{no} $T>0$ the Cauchy problem \eqref{3NLSR} and \eqref{ic} has a solution $u\in C([0,T];H^{s_1} (\mathbf{T}))$ on $[0,T]$, or a solution $u\in C([-T,0];H^{s_1}(\mathbf{T}))$ on $[-T,0]$.
\end{theorem}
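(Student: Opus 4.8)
The plan is to argue by contradiction: from the existence of a solution $u\in C([0,T];H^{s_1})$ I would extract a spurious gain of regularity for the initial datum, incompatible with a carefully chosen $u_0\in H^s$. First I would pass to the reduced equation \eqref{red3NLS} through the gauge transformation recorded above, which is a bijection on $C_tH^{s_1}$ and fixes the datum at $t=0$, so that the only linear term carrying a nonzero real part is the Cauchy--Riemann term $ia\partial_x$ with $a=\frac{\Gamma}{2\pi}\|u_0\|_{L^2}^2>0$ by \eqref{h1}. Writing $S(t,k):=\exp\{t(ak-i\alpha_1k^3-i\alpha_2k^2)\}$ for the associated Fourier multiplier and setting $w(t,k):=S(t,k)^{-1}\hat v(t,k)$, the equation for the gauged solution $v$ becomes $\partial_tw(t,k)=S(-t,k)\widehat{\mathcal N}(t,k)$, where $\mathcal N$ collects the nonresonant Raman term together with the surviving cubic terms. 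Integrating in time and using $w(0,k)=\hat u_0(k)$ produces the representation
\[ \hat u_0(k)=w(T,k)-\int_0^T S(-\tau,k)\,\widehat{\mathcal N}(\tau,k)\,d\tau. \]

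Second, I would exploit the sign of the exponent. For $k>0$ one has $|S(-\tau,k)|=e^{-ak\tau}$, so the boundary term $w(T,k)=S(-T,k)\hat v(T,k)$ is super-polynomially small in $k$ and lies in every Sobolev space; the whole obstruction is concentrated in the Duhamel integral over the positive frequencies $P_+$. The key claim to establish is that this integral defines a function whose positive-frequency part belongs to $H^{s_1+1}$ whenever $u\in C([0,T];H^{s_1})$. Granting it, one reaches a contradiction by choosing $u_0\in H^s$ with $P_+u_0\notin H^{s_1+1}$, which is possible precisely because $s<s_1+1$ (for instance $\hat u_0(k)=\langle k\rangle^{-s-1/2}(\log\langle k\rangle)^{-1}$ for $k\neq0$). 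The backward interval $[-T,0]$ is treated identically after applying the symmetry $u(t,x)\mapsto\overline{u(-t,x)}$ of Remark \ref{remsol}(a), which exchanges the roles of $P_+$ and $P_-$; accordingly I would require $P_\pm u_0\notin H^{s_1+1}$ for both signs at once.

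Third, the crux is showing that the Duhamel integral, restricted to positive frequencies, gains a full derivative over $H^{s_1}$. Two independent smoothing mechanisms must be combined against the derivative loss carried by the Raman nonlinearity (the factor $k_1+k_2$) and by the term $\gamma_2\partial_x(|u|^2u)$. The first is the real exponential decay $e^{-ak\tau}$ coming from the Cauchy--Riemann term: integrating it in $\tau$ gains inverse powers of $k$ at the top frequency. The second is dispersive: on the nonresonant set $(k_1+k_2)(k_2+k_3)\neq0$ the cubic phase $\Phi$ hidden in $S(-\tau,k)\widehat{\mathcal N}(\tau,k)$ admits a lower bound, with the hypothesis $2\alpha_2/3\alpha_1\notin\mathbf Z$ in \eqref{h1} preventing its degeneration, so a normal-form reduction (integration by parts in $\tau$) trades the oscillation for extra smoothing, in the spirit of the cubic estimates for the NLS, mKdV, and third-order NLS cited in the introduction and of Bourgain's bound for $I_1$. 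I expect the quantitative trilinear smoothing estimate that makes these two gains add up to one derivative---with constants uniform in $T$, and with careful treatment of the borderline region where $k_1,k_2,k_3$ are all comparable to $k$---to be the principal technical obstacle, and the lemmas collected in Section \ref{sec:il} to be exactly what furnishes it.
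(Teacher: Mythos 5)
Your skeleton coincides with the paper's: pass to the reduced equation \eqref{eq:hat-w} via the gauge/interaction-picture transformations, integrate with the factor $e^{akt}$, $a=\frac{\Gamma}{2\pi}\|u_0\|_{L^2}^2>0$, apply a normal form (integration by parts in $t$) on the nonresonant region $D_2(k)$ where \eqref{est:D2} holds, treat the comparable-frequency region $D_1(k)$ directly, and handle the backward interval by the conjugation symmetry of Remark~\ref{remsol}. The genuine gap is your pivotal smoothing claim: that the Duhamel contribution places $P_+u_0$ in $H^{s_1+1}$, i.e.\ a gain of a \emph{full} derivative in the $\ell^2$-based Sobolev scale. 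The lemmas of Section~\ref{sec:il} do not furnish this, and it is strictly stronger than what the method delivers. Lemma~\ref{lem:multilinear} produces bounds only of the form $\sup_{k}\langle k\rangle^{\sigma}|\cdot|$, and the resulting conclusion \eqref{est:hat-w} is the \emph{pointwise} bound $\sup_{k>0}\langle k\rangle^{s_1+1}|\hat u_0(k)|<\infty$: one derivative is gained only in a weighted $\ell^\infty_k$ sense. Converting this to Sobolev membership by Cauchy--Schwarz costs $\tfrac12+$, which is exactly why Theorem~\ref{ip+}~(i) asserts only $P_+u_0\in H^{s+\frac12-}$, not $H^{s+1}$. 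Two concrete obstructions block the upgrade you would need: the quantities to be square-summed, such as $\sup_{t\in[0,T]}|G[w(t)](k)|$, involve $\sup_t$ \emph{inside} the $\ell^2_k$ sum, and $\ell^2_k\sup_t$ is not controlled by $\sup_t\|\cdot\|_{H^{s_1+1}}$ for a merely continuous $H^{s_1}$-valued path (a ``moving bump'' in Fourier space shows strict failure); and the one-power gain from $\int_0^Te^{-akt}\,dt\lesssim (ak)^{-1}$ pairs naturally only with sup-in-$k$ trilinear bounds, since the derivative-losing interactions in $F_{3,1}$ and $H$ are recovered pointwise, not in $\ell^2$.

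As a consequence, your proposed datum $\hat u_0(k)=\langle k\rangle^{-s-1/2}(\log\langle k\rangle)^{-1}$ fails to produce a contradiction in the range $s_1+\tfrac12\le s<s_1+1$: there it satisfies $\sup_{k}\langle k\rangle^{s_1+1}|\hat u_0(k)|<\infty$, so it is perfectly consistent with the provable pointwise bound \eqref{est:hat-w} (and with $P_+u_0\in H^{s_1+\frac12-}$ from Theorem~\ref{ip+}). The missing idea is to exploit the $\ell^\infty$ nature of the gain through \emph{lacunary} data, which is exactly the paper's choice \eqref{def:u_0}: $\hat u_0(k)=\langle k\rangle^{-s_0}$ on $k=\pm2^j$ with $s_0\in(s,s_1+1)$, and $0$ otherwise. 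Sparsity of the support decouples $H^s$ membership from the pointwise decay rate, so $u_0\in H^s$ while $\langle 2^j\rangle^{s_1+1-s_0}\to\infty$ violates \eqref{est:hat-w} along the lacunary sequence, covering the entire range $s<s_1+1$; the symmetric support $\pm2^j$ also handles both time directions at once, since the conjugation $u(t,x)\mapsto\overline{u(-t,x)}$ exchanges positive and negative frequencies. With your key claim weakened to the pointwise one-derivative bound and your datum replaced by the lacunary one, the argument becomes the paper's proof.
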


\begin{theorem}\label{ip+}
We assume that \eqref{h1} holds.
Let $s\ge 1$, and let $u\in C(I;H^{\frac{1}{2}+}(\mathbf{T}))$ be a solution to \eqref{3NLSR}--\eqref{ic} on a closed interval $I$ containing $0$ such that $u(t)\in H^s(\mathbf{T})$ for $t\in I$ and $\sup_{t\in I}\| u(t)\|_{H^s}<\infty$.
The following holds for any $T>0$.
\begin{enumerate}
\item If $I=[0,T]$, then $P_+u_0\in H^{s+\frac{1}{2}-}$.
\item If $I=[-T,0]$, then $P_-u_0\in H^{s+\frac{1}{2}-}$.
\item If $I=[-T,T]$, then $u(t)\in H^\infty$ for $t\in (-T,T)$ and $u_0$ satisfies
\[ \| u_0\|_{H^s}\le C_1^sR_1^{s^2} \]
for any $s\ge 1$ with the constants $C_1=C_1(\sup\limits_{t\in I}\| u(t)\|_{H^1},\,\| u_0\|_{L^2}^{-1},\,T^{-1})>0$ and $R_1>0$ independent of $s$.
\end{enumerate}
\end{theorem}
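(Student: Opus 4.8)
The plan is to extract, on the Fourier side, the one-sided smoothing produced by the Cauchy--Riemann type operator $\partial_t+ia\partial_x$ appearing in \eqref{3NLSR2} and \eqref{red3NLS}. First I pass to the reduced equation \eqref{red3NLS}: the modulation–translation gauge defining $v$ is an $H^{\sigma}$-isometry for every $\sigma$ and removes the resonant $\gamma_1,\gamma_2$ contributions, while the surviving resonant Raman term $-\frac{\Gamma}{2\pi}\bigl(\sum_{k_2}k_2|\hat v(k_2)|^2\bigr)v=:c(t)v$ is a \emph{bounded real} zeroth-order coefficient, since $|c(t)|\lesssim \|v(t)\|_{\dot H^{1/2}}^2\le \|u(t)\|_{H^{1/2+}}^2<\infty$ on $I$. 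Writing $\omega(k):=\alpha_1k^3+\alpha_2k^2$ and $a:=\frac{\Gamma}{2\pi}\|u_0\|_{L^2}^2$, which is positive by \eqref{h1} and constant in $t$ by Lemma~\ref{lem:L2law}, the Fourier-coefficient form \eqref{hateq} (valid here by Remark~\ref{remsol}, as $u\in C(I;H^{1/2+})$) transfers to $v$ and reads
\[ \partial_t\hat v(t,k)=\bigl(ak-i\omega(k)+c(t)\bigr)\hat v(t,k)+\hat{\mathcal N}(t,k), \]
where $\mathcal N$ collects only the nonresonant cubic interactions. The decisive point is $\operatorname{Re}\bigl(ak-i\omega(k)+c(t)\bigr)=ak+c(t)$, which for $k>0$ tends to $+\infty$ with $k$ (the bounded shift $c(t)$ being harmless for large $|k|$).

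For part~(i) I solve the Duhamel formula at the right endpoint. With $E(k):=\exp\bigl(\int_0^T(ak-i\omega(k)+c(s))\,ds\bigr)$,
\[ \hat v_0(k)=E(k)^{-1}\hat v(T,k)-\int_0^T\exp\Bigl(-\int_{t'}^T\bigl(ak-i\omega(k)+c(s)\bigr)\,ds\Bigr)\hat{\mathcal N}(t',k)\,dt'. \]
For $k>0$ the integrating factors have modulus $\lesssim e^{-ak(T-t')}$, so the endpoint term is $\lesssim e^{-akT}|\hat v(T,k)|$, super-exponentially small in $k$ and hence in $H^\infty$. To estimate the Duhamel integral in $H^{s+\sigma}$ I apply Minkowski's inequality in $t'$ and bound, for fixed $t'$, the $\ell^2_k$ norm of $\langle k\rangle^{s+\sigma}e^{-ak(T-t')}\hat{\mathcal N}(t',k)$ by $\sup_{k>0}\bigl(\langle k\rangle^{\sigma+\delta}e^{-ak(T-t')}\bigr)\,\|\mathcal N(t')\|_{H^{s-\delta}}\lesssim (a(T-t'))^{-(\sigma+\delta)}\|u(t')\|_{H^s}^3$, where $\delta$ is the derivative loss of the nonresonant nonlinearity. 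By the Bourgain-type smoothing estimate of this section $\mathcal N$ loses only $\delta=\tfrac12$ derivative (rather than the full derivative suggested by the factor $k_1+k_2$ in $I_1$), so the time singularity $(T-t')^{-(\sigma+\delta)}$ is integrable precisely when $\sigma+\delta<1$, i.e.\ for every $\sigma<\tfrac12$. Hence the $k>0$ part of $\hat v_0$ lies in $H^{s+\frac12-}$ under the hypothesis $\sup_{t\in I}\|u(t)\|_{H^s}<\infty$, and undoing the regularity-preserving gauge yields $P_+u_0\in H^{s+\frac12-}$. Part~(ii) follows from the same computation with $k<0$, or at once from the time-reversal symmetry of Remark~\ref{remsol}(a), under which $P_+u_0$ and $P_-u_0$ are interchanged, regularity is preserved, and \eqref{h1} is invariant.

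For part~(iii) with $I=[-T,T]$, applying (i) on $[t,T]$ and (ii) on $[-T,t]$ at an interior $t\in(-T,T)$ upgrades $P_+u(t)$ and $P_-u(t)$ (the zero mode being trivial) by $\tfrac12-$ derivatives, so $u(t)\in H^{s+\frac12-}$; reinserting this as the new a priori bound and iterating gains a further $\tfrac12-$ at each step, whence $u(t)\in H^\infty$. To reach the quantitative estimate I iterate on a nested family $I\supset I_1\supset I_2\supset\cdots$ with $|I_n\setminus I_{n+1}|\sim \rho^nT$ for some $\rho<1$ (so that infinitely many steps fit inside $(-T,T)$, the inset being forced by the need for $t$ to lie at distance $\gtrsim\rho^nT$ from the endpoints used in the endpoint-smoothing). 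At the $n$-th step the regularity rises by $\tfrac12-$ and the bound is multiplied by a factor $K_\sigma\,(a\,|I_n\setminus I_{n+1}|)^{-\mu}$ with $\mu<1$, where $\sigma$ is the current level and $K_\sigma$ is the constant of the nonresonant cubic estimate with two of the three factors taken at the fixed $H^1$ level, so the recursion is \emph{linear} in the top-regularity norm and does not blow up. Reaching $H^s$ takes $O(s)$ steps; the interval factors contribute $\prod_{n\lesssim s}\rho^{-\mu n}\sim e^{cs^2}=R_1^{s^2}$, the level-independent parts assemble into $C_1^s$, and the dependence of $C_1$ on $\sup_{t\in I}\|u(t)\|_{H^1}$, on $a^{-1}\sim\|u_0\|_{L^2}^{-1}$, and on $T^{-1}$ arises exactly as stated. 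Evaluating the interior bound at $t=0$ and undoing the gauge gives the estimate for $u_0$.

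The principal obstacle is the smoothing estimate for the nonresonant cubic nonlinearity $\mathcal N$: one must show that, despite the derivative-producing factor $k_1+k_2$ in the Raman term $I_1$, $\mathcal N$ loses only $\tfrac12$ derivative, and one must obtain this with constants whose dependence on the regularity index is explicit and mild enough to survive the infinite iteration of part~(iii). Establishing the $\tfrac12$-gain uniformly in $k$ and in the endpoint time, and organizing the nested-interval iteration so that the accumulated constants assemble into precisely $C_1^sR_1^{s^2}$ rather than a faster-growing bound, is the technical heart of the argument.
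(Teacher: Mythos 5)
Your overall architecture (gauge to the reduced equation, Duhamel against the semigroup $e^{(ak-i\omega(k))t}$, gain of regularity for $k>0$ from the decaying factor, time reversal for (ii), iteration on nested intervals accumulating $C_1^sR_1^{s^2}$ for (iii)) matches the paper's, but there is a genuine gap at the step you yourself flag as the ``technical heart'': you assume a \emph{fixed-time} smoothing estimate $\|\mathcal N(t')\|_{H^{s-\delta}}\lesssim \|u(t')\|_{H^s}^3$ with $\delta=\tfrac12$ for the nonresonant cubic term, and this is false. At a fixed time the oscillatory factor $e^{it\Phi}$ is unimodular, so nonresonance gives nothing pointwise in $t$; the high$\times$low$\to$high interactions in the Raman term defeat any static gain. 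Concretely, take $\hat v(N)=N^{-s}$, $\hat v(0)=\hat v(1)=1$: the triple $(k_1,k_2,k_3)=(N,0,1)$ lies in the nonresonant set, carries the multiplier $\Gamma(k_1+k_2)\sim \Gamma N$ (and its partner $(1,0,N)$ has the \emph{same} phase $\Phi$, so no cancellation), whence $|\hat{\mathcal N}(N+1)|\gtrsim N^{1-s}$ and $\|\mathcal N\|_{H^{s-\delta}}\gtrsim N^{1-\delta}$. Thus the minimal static loss is $\delta=1$, and your integrability condition $\sigma+\delta<1$ then forces $\sigma<0$: your Duhamel--Minkowski scheme produces no gain at all. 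The Bourgain estimate you invoke ($I_1$ bounded for $s\ge 1/2$) is an $X^{s,b}$ space-time statement and cannot be converted into the fixed-time bound your Minkowski step requires.

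The paper's proof repairs exactly this point, and the mechanism is temporal rather than spatial: it splits $D(k)=D_1(k)\cup D_2(k)$, where on $D_1$ all frequencies are comparable (so $|k|\lesssim \min_j|k_j|$ and a static estimate \eqref{est:F} holds with only $R^s$ growth in the constant), while on the frequency-separated set $D_2$ -- precisely where your static estimate fails -- it integrates by parts in time (normal form), writing $F_{3,2}=\partial_tG+H$ and using $|\Phi|\gtrsim \max_j\langle k_j\rangle^2$ from \eqref{est:D2}; the boundary term $G$ then gains a full derivative (\eqref{est:G}) and the quintic remainder $H$, after re-substituting $\partial_t\hat w$ from the equation, loses none (\eqref{est:H}). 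The elliptic gain is then harvested pointwise in $k$ via $\int_0^Te^{-akt}\,dt\le (ak)^{-1}$, yielding $\langle k\rangle^{s+1}|\hat u_0(k)|\lesssim R^s\langle\|u_0\|_{L^2}^{-2}\rangle\langle T^{-1}\rangle\langle E_1^+(T)\rangle^4E_s^+(T)$, and Cauchy--Schwarz converts this weighted $\ell^\infty$ bound into $P_+u_0\in H^{s+\frac12-}$ -- the same net $\tfrac12-$ gain you aimed for, but obtained through the boundary terms of the time integration by parts, not through a fixed-time trilinear bound. Note also that the explicit $R^s$ tracking in Lemma~\ref{lem:multilinear} is what makes the iteration in (iii) close with the bound $C_1^sR_1^{s^2}$; without a replacement for your false static estimate, neither (i) nor the quantitative part of (iii) survives as written.
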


\begin{remark}\label{rem:ip+}
An immediate consequence of Theorem~\ref{ip+} is the following:
Let $1\le s_1\le s$ and $u_0\in H^s(\mathbf{T})$.
\begin{enumerate}
\item If ($s<s_1+\frac{1}{2}$ and) $P_+u_0\not\in H^{s_1+\frac{1}{2}-}$, then for \emph{no} $T>0$ there exists a solution $u\in C([0,T];H^{s_1}(\mathbf{T}))$ on $[0,T]$.
\item If ($s<s_1+\frac{1}{2}$ and) $P_-u_0\not\in H^{s_1+\frac{1}{2}-}$, then for \emph{no} $T>0$ there exists a solution $u\in C([-T,0];H^{s_1}(\mathbf{T}))$ on $[-T,0]$.
\item If $u_0\not\in H^\infty$, then for \emph{no} $T>0$ there exists a solution\\
 $u\in C([-T,T];H^{1}(\mathbf{T}))$ on $[-T,T]$.
\item If $u_0\in H^\infty$ and the estimate
\[ \sup _{s\ge 1}R^{-s^2}\| u_0\|_{H^s}<\infty \]
is \emph{false} for any $R>0$, then for \emph{no} $T>0$ there exists a solution $u\in C([-T,T];H^{1}(\mathbf{T}))$ on $[-T,T]$.
\end{enumerate}
\end{remark}

\begin{remark}
(a) Let us compare these theorems.

On one hand, when $1\le s_1\le s<s_1+\frac{1}{2}$, Theorem~\ref{ip+} (Remark~\ref{rem:ip+}) is stronger than Theorem~\ref{ip} in the sense that the nonexistence is shown for general initial data.
Moreover, Theorem~\ref{ip+} shows the nonexistence of solution on $[-T,T]$ for arbitrarily large $s$ (not necessarily satisfying $s<s_1+1$).

On the other hand, when $s_1+\frac{1}{2}\le s<s_1+1$, Theorem~\ref{ip} is stronger in that it shows the existence of an $H^s$ function which cannot be the initial data for a solution forward in time, nor a solution backward in time. 
Note that Theorem~\ref{ip+} does not exclude the possibility of the existence of a solution toward only one side from $t=0$.

(b) Theorem~\ref{ip+}~(iii) suggests that a solution may not exist even for $C^\infty$ initial data.
In fact, the function $u_0\in H^\infty$ defined by
\[ \hat{u}_0(k):=e^{-[\log \langle k\rangle ]^{4/3}},\qquad k\in \mathbf{Z}\]
satisfies $\| u_0\| _{H^s}\ge \langle e^{s^2}-1\rangle ^s|\hat{u}_0(e^{s^2}-1)|=e^{s^3-s^{8/3}}$ for any $s\ge 1$ such that $e^{s^2}\in \mathbf{Z}$, and by Remark~\ref{rem:ip+}~(iv) it cannot be the initial data for a solution in $C([-T,T];H^1)$.
\end{remark}

Before proving these theorems, we see the $L^2$ conservation for \eqref{3NLSR}.

\begin{lemma}\label{lem:L2law}
Let $T>0$, $s>1/2$, and $u\in C([0,T];H^s(\mathbf{T}))$ be a solution to the Cauchy problem \eqref{3NLSR}--\eqref{ic} on $[0,T]$ with initial data $u_0\in H^s(\mathbf{T})$.
Then, we have
\begin{equation*}
\| u(t)\| _{L^2(\mathbf{T})}=\| u_0\| _{L^2(\mathbf{T})},\qquad t\in [0,T].
\end{equation*}
\end{lemma}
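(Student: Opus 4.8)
The plan is to differentiate $\|u(t)\|_{L^2}^2$ in time and show the derivative vanishes, exploiting the skew-adjointness structure of the linear part and the particular form of the nonlinearity. Since $u\in C([0,T];H^s)$ with $s>1/2$, by Remark~\ref{remsol}(b)--(c) the Fourier coefficients $\hat{u}(\cdot ,k)$ are in $C^1([0,T])$ and satisfy the ODE system \eqref{hateq}, with the sum converging absolutely and uniformly. I would work at the level of these Fourier coefficients, writing
\[ \frac{d}{dt}\| u(t)\| _{L^2}^2 =\frac{d}{dt}\sum _{k\in \mathbf{Z}}|\hat{u}(t,k)|^2 =\sum _{k\in \mathbf{Z}}\Bigl ( \partial _t\hat{u}(t,k)\,\overline{\hat{u}(t,k)}+\hat{u}(t,k)\,\overline{\partial _t\hat{u}(t,k)}\Bigr ) =2\,\mathrm{Re}\sum _{k\in \mathbf{Z}}\partial _t\hat{u}(t,k)\,\overline{\hat{u}(t,k)}. \]
The term-by-term differentiation under the sum must be justified; I would check this using the uniform convergence in Remark~\ref{remsol}(b) together with the a priori bound $\sup _t\| u(t)\|_{H^s}<\infty$, so that the series of derivatives converges uniformly on $[0,T]$ and the interchange is legitimate.

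Next I would substitute \eqref{hateq} for $\partial _t\hat{u}(t,k)$ and treat the resulting contributions separately. The dispersive term $-i(\alpha_1 k^3+\alpha_2 k^2)\hat{u}(t,k)$ contributes $2\,\mathrm{Re}\sum _k[-i(\alpha_1 k^3+\alpha_2 k^2)]|\hat{u}(t,k)|^2$, which is $0$ because the summand is purely imaginary times a real quantity. For the cubic terms, I would rewrite everything back in physical space: the whole nonlinearity in \eqref{3NLSR} is
\[ i\gamma_1 |u|^2u+\gamma_2 \partial _x\bigl (|u|^2u\bigr )-i\Gamma u\partial _x\bigl (|u|^2\bigr ), \]
and I would compute $\mathrm{Re}\int _{\mathbf{T}}(\text{nonlinearity})\,\bar{u}\,dx$ for each piece. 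The term $i\gamma_1\int |u|^2u\bar u=i\gamma_1\int |u|^4$ is purely imaginary, so its real part is zero. For $\gamma_2\int \partial _x(|u|^2u)\bar u\,dx$, an integration by parts (valid since $u\in H^s$, $s>1/2$, makes $|u|^2u\in H^s$) gives $-\gamma_2\int |u|^2u\,\partial _x\bar u\,dx$, and adding it to its own complex conjugate one finds the real part reduces to $-\tfrac{\gamma_2}{2}\int \partial _x(|u|^2)\,|u|^2\,dx=-\tfrac{\gamma_2}{4}\int \partial _x(|u|^4)\,dx=0$. Finally, for $-i\Gamma \int u\,\partial _x(|u|^2)\,\bar u\,dx=-i\Gamma \int |u|^2\,\partial _x(|u|^2)\,dx=-\tfrac{i\Gamma}{2}\int \partial _x\bigl ((|u|^2)^2\bigr )\,dx=0$ the integral itself vanishes on the torus. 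Hence every contribution is zero and $\tfrac{d}{dt}\| u(t)\|_{L^2}^2=0$, giving the claim by integrating from $0$.

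The main obstacle is the rigorous justification of the formal computations at the regularity $s>1/2$, rather than the algebra itself, which is routine. Specifically, one must ensure that $\partial _x(|u|^2)$ and the products $|u|^2u$ are handled in a way consistent with Definition~\ref{def}: the pairing $\langle \partial _x(|u|^2),u\bar u\rangle$ appears as a duality bracket $H^{-s}\times H^s$, not a literal integral, so the integrations by parts above should really be read as identities for this pairing. I would establish these by first proving the result for smooth $u$ (where all manipulations are classical) and then passing to the limit, using that for fixed $t$ one can approximate $u(t)$ in $H^s$ by smooth functions and that all the trilinear forms involved are continuous on $H^s\times H^s\times H^s$ for $s>1/2$ by the algebra property and the Sobolev embedding $H^s\hookrightarrow L^\infty$. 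Alternatively, and perhaps more cleanly, I would carry out the entire argument at the Fourier-coefficient level on the expression $2\,\mathrm{Re}\sum _k\partial _t\hat{u}(t,k)\overline{\hat{u}(t,k)}$, where each cubic contribution becomes a convergent quadruple sum over $k=k_1+k_2+k_3$; symmetrizing in the summation indices and using $\hat{\bar u}(k)=\overline{\hat u(-k)}$ one checks directly that the real part of each symmetrized sum vanishes, bypassing the integration-by-parts subtleties entirely.
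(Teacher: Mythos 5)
Your proof is correct, but it takes a genuinely different route from the paper's. The paper regularizes the solution itself: it sets $u_N:=P_{\le N}u$, writes out the equation satisfied by the truncation (with commutator-type error terms), runs the classical energy identity on the smooth $u_N$ --- where the three cancellations you list hold exactly --- and then sends $N\to\infty$, killing the error term $R_N$ by the trilinear Sobolev estimate $\| fg\partial_x h\| _{H^{-s}}\lesssim \| f\| _{H^s}\| g\| _{H^s}\| h\| _{H^s}$ (valid for $s>1/2$) combined with $\| u-u_N\| _{L^\infty_t H^s}\to 0$. You instead differentiate the Parseval series termwise via the ODE system \eqref{hateq} and verify the cancellations directly, by symmetrization of the quadruple sum or by density. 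Both arguments ultimately rest on the same $s>1/2$ trilinear bound: in your version it appears as the absolute, uniform-in-$t$ convergence of $\sum_k\sum_{k_1+k_2+k_3=k}(\cdots )\overline{\hat u(t,k)}$, which you rightly flag as the point requiring justification; note that a naive Weierstrass test with $\sup_t|\hat u(t,k)|$ can fail, and the correct ingredient is the uniform tail smallness of $\| u(t)\| _{H^s}$ coming from compactness of the orbit of the continuous map $t\mapsto u(t)$ on $[0,T]$, exactly as behind the uniform convergence asserted in Remark~\ref{remsol}. The paper's truncation buys robustness (a single clean limiting estimate, no termwise manipulation), while yours makes the algebraic cancellation structurally transparent and avoids commutators. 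One caveat in your write-up: the expression $2\,\mathrm{Re}\sum_k[-i(\alpha_1 k^3+\alpha_2 k^2)]|\hat u(t,k)|^2$ is a possibly divergent series when $1/2<s<3/2$ (relatedly, $\partial_t u\in H^{s-3}$ cannot be paired with $u\in H^s$ unless $s\ge 3/2$); you must observe that the real part of each \emph{summand} vanishes identically, so the dispersive contribution drops out termwise \emph{before} summation --- this is precisely why the Fourier-side route survives at low regularity, and it is a one-line fix to your phrasing rather than a gap.
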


\begin{proof}
It is not hard to derive the conservation law formally, so we only see how to make it rigorous under the assumption on regularity.

Let $P_{\le N}$ be the projection onto frequency range $\{ k\in \mathbf{Z}\,|\,|k|\le N\}$.
Then, $u_N:=P_{\le N}u$ solves 
\begin{align*}
\partial_t u_N  =&\; \alpha_1 \partial_x^3 u_N + i \alpha_2 \partial_x^2 u_N + P_{\le N}\bigl ( i\gamma_1 |u|^2 u +\gamma_2 \partial_x \bigl ( |u|^2 u \bigr ) - i\Gamma u \partial_x \bigl (|u|^2\bigr )\bigr ) \\
=&\; \alpha_1 \partial_x^3 u_N + i \alpha_2 \partial_x^2 u_N + i\gamma_1 |u_N|^2 u_N +\gamma_2 \partial_x ( |u_N|^2 u_N ) - i\Gamma u_N \partial_x (|u_N|^2)\\
& +i\gamma_1 \bigl ( P_{\le N}(|u|^2 u)-|u_N|^2u_N\bigr ) +\gamma_2 \bigl ( P_{\le N}\partial_x ( |u|^2 u) -\partial_x \bigl ( |u_N|^2 u_N \bigr ) \bigr ) \\
& - i\Gamma \bigl ( P_{\le N}\bigl ( u \partial_x (|u|^2)\bigr ) - u_N \partial_x \bigl (|u_N|^2\bigr )\bigr ) ,\\
u_N(0) &= P_{\le N}u_0.
\end{align*}
Note that $u_N$ is smooth and the above equality holds in the classical sense.
Taking the real $L^2$ inner product with $u_N$ and then integrating over $(0,t)$, we obtain after some integration by parts that
\begin{equation}\label{L2law-0}
\| u_N(t)\| _{L^2}^2= \| P_{\le N}u_0\| _{L^2}^2+\int _0^t R_N(t')\,dt',\qquad t\in (0,T],
\end{equation}
where
\begin{align*}
R_N(t):=&\; 2\Re \int _{\mathbf{T}} i\gamma_1 P_{\le N}\bigl [ |u|^2 u-|u_N|^2u_N\bigr ] (t)\bar{u}_N(t)\,dx\\
&+2\Re \int _{\mathbf{T}}\gamma_2 P_{\le N}\bigl [ \partial_x ( |u|^2 u) -\partial_x \bigl ( |u_N|^2 u_N \bigr )\bigr ] (t)\bar{u}_N(t)\,dx\\
&-2\Re \int _{\mathbf{T}}i\Gamma P_{\le N}\bigl [ u \partial_x (|u|^2)- u_N \partial_x \bigl (|u_N|^2\bigr )\bigr ] (t)\bar{u}_N(t)\,dx.
\end{align*}
Since the Sobolev estimate
\[ \| fg\partial_x h\| _{H^{-s}}\lesssim \| f\| _{H^s}\| g\| _{H^s}\| h\| _{H^s}\]
is valid if $s>1/2$, we have
\[ |R_N(t)|\lesssim \| u(t)\| _{H^s}^3\| u(t)-u_N(t)\| _{H^s}.\]
Finally, taking $N\to \infty$ in \eqref{L2law-0} and noticing that $\| u-u_N\| _{L^\infty (0,t;H^s)}\to 0$ as $N\to \infty$ for $u\in C([0,t];H^s)$, we obtain the desired $L^2$ conservation law.
\end{proof}

We will give proofs of Theorems~\ref{ip}, \ref{ip+} after some discussion on general $H^s$ solutions to the Cauchy problem.

Let $s\ge 1$, $u_0\in H^s(\mathbf{T})$, $T>0$, and let $u$ be a solution to the Cauchy problem on $[0,T]$ which belongs to $C([0,T];H^s(\mathbf{T}))$.
By Remark~\ref{remsol} (ii) (iii), $\hat{u}(\cdot ,k)\in C^1([0,T])$ and \eqref{hateq}--\eqref{hatic} holds for any $k\in \mathbf{Z}$.

We introduce a new function $v\in C([0,T];H^s(\mathbf{T}))$ by
\begin{equation*}
\hat{v}(t,k):=e^{i(\alpha_1 k^3+\alpha_2 k^2)t}\hat{u}(t,k),\qquad (t,k)\in [0,T]\times \mathbf{Z}.
\end{equation*}
Observe that $\hat{v}(\cdot ,k)\in C^1([0,T])$ and it is a solution to 
\begin{align}
\begin{split}
\partial _t\hat{v}(t,k)=& \sum _{k_1+k_2+k_3=k} \frac{i\gamma_1 +i\gamma_2 k+\Gamma (k_1+k_2)}{2\pi}e^{it\Phi}\hat{v}(t,k_1)\hat{\bar{v}}(t,k_2)\hat{v}(t,k_3),
\end{split}\label{eq:hat-v} \\
\hat{v}(0,k)=&\; \hat{u}_0(k)\label{ic:hat-v},
\end{align}
where 
\begin{equation}\label{def:Phi}
\begin{split}
\Phi =&\;\Phi (k_1,k_2,k_3)\\
:=&\;\bigl ( \alpha_1 (k_1+k_2+k_3)^3+\alpha_2 (k_1+k_2+k_3)^2\bigr ) \\
&-(\alpha_1 k_1^3+\alpha_2 k_1^2)+\bigl ( \alpha_1 (-k_2)^3+\alpha_2 (-k_2)^2\bigr ) -(\alpha_1 k_3^3+\alpha_2 k_3^2)\\
=&\;3\alpha_1 (k_1+k_2)(k_2+k_3)\bigl ( k_3+k_1+\frac{2\alpha_2}{3\alpha_1}\bigr ) .
\end{split}
\end{equation}
Under the assumption $\frac{2\alpha_2}{3\alpha_1}\not\in \mathbf{Z}$, it holds that
\begin{gather}
\Phi (k_1,k_2,k_3)=0\quad \Leftrightarrow \quad (k_1+k_2)(k_2+k_3)=0, \label{cond:Phi=0}\\ 
\label{est:Phi}
\Phi (k_1,k_2,k_3)\neq 0\quad \Rightarrow \quad |\Phi (k_1,k_2,k_3)|\sim \langle k_1+k_2\rangle \langle k_2+k_3\rangle  \langle k_3+k_1\rangle .
\end{gather}

As in the preceding section, we notice \eqref{cond:Phi=0} and separate the resonant terms ($\Phi =0$) from the summation in \eqref{eq:hat-v} to have
\begin{align*}
\partial _t\hat{v}(k)
=&\; \Bigl [ \sum _{\begin{smallmatrix} k_1+k_2+k_3=k\\ (k_1+k_2)(k_2+k_3)\neq 0\end{smallmatrix}} +\sum _{\begin{smallmatrix} k_1+k_2=0\\ k_3=k\end{smallmatrix}}+\sum _{\begin{smallmatrix} k_2+k_3=0\\ k_1=k\end{smallmatrix}}-\sum _{\begin{smallmatrix} k_1=-k_2=k_3=k\end{smallmatrix}}\Bigr ] \\
&\qquad \qquad \qquad \frac{i\gamma_1 +i\gamma_2 k+\Gamma (k_1+k_2)}{2\pi}e^{it\Phi}\hat{v}(k_1)\bar{\hat{v}}(-k_2)\hat{v}(k_3)\\
=&\; \frac{i\gamma_1+i\gamma_2k}{\pi}\bigl ( \sum _{k'\in \mathbf{Z}}|\hat{v}(k')|^2\bigr ) \hat{v}(k) +\frac{\Gamma}{2\pi}\bigl ( \sum _{k'\in \mathbf{Z}}(k-k')|\hat{v}(k')|^2\bigr ) \hat{v}(k)\\
& -\frac{i\gamma_1+i\gamma_2k}{2\pi}|\hat{v}(k)|^2\hat{v}(k) \\
& +\sum _{\begin{smallmatrix} k_1+k_2+k_3=k\\ (k_1+k_2)(k_2+k_3)\neq 0\end{smallmatrix}}\frac{i\gamma_1+i\gamma_2 k+\Gamma (k_1+k_2)}{2\pi}e^{it\Phi}\hat{v}(k_1)\bar{\hat{v}}(-k_2)\hat{v}(k_3).
\end{align*}

We further move to a reduced equation, as we did in \eqref{red3NLS}, by introducing a new function $w\in C([0,T];H^s(\mathbf{T}))$ as
\begin{equation*}
\hat{w}(t,k):=\exp \bigl ( -i\frac{\gamma_1+\gamma_2k}{\pi}\| u_0\| _{L^2(\mathbf{T})}^2t\bigr ) \hat{v}(t,k),\qquad (t,k)\in [0,T]\times \mathbf{Z}.
\end{equation*}
Recalling the $L^2$ conservation law established in Lemma~\ref{lem:L2law}, we observe that $\hat{w}$ solves
\begin{align}
\begin{split}
\partial _t\hat{w}(k)
=&\; \frac{\Gamma}{2\pi}\| u_0\| _{L^2(\mathbf{T})}^2k\hat{w}(k)\\
& -\Bigl [ \frac{\Gamma}{2\pi}\bigl ( \sum _{k'\in \mathbf{Z}}k'|\hat{w}(k')|^2\bigr ) \hat{w}(k)+\frac{i\gamma_1+i\gamma_2k}{2\pi}|\hat{w}(k)|^2\hat{w}(k) \Bigr ] \\
& +\frac{i\gamma_1}{2\pi}\sum _{\begin{smallmatrix} k_1+k_2+k_3=k\\ (k_1+k_2)(k_2+k_3)\neq 0\end{smallmatrix}}e^{it\Phi}\hat{w}(k_1)\bar{\hat{w}}(-k_2)\hat{w}(k_3)\\
& +\sum _{\begin{smallmatrix} k_1+k_2+k_3=k\\ (k_1+k_2)(k_2+k_3)\neq 0\end{smallmatrix}}\frac{i\gamma_2 k+\Gamma (k_1+k_2)}{2\pi}e^{it\Phi}\hat{w}(k_1)\bar{\hat{w}}(-k_2)\hat{w}(k_3)\\
=:&\; \frac{\Gamma}{2\pi} \| u_0\| _{L^2}^2k\hat{w}(k)+F_1[w(t)](k)+F_2[w(t)](k)+F_3[w(t)](k),
\end{split}\label{eq:hat-w} \\
\hat{w}(0,k)=&\; \hat{u}_0(k)\label{ic:hat-w}.
\end{align}

\begin{remark} \label{redeq}
The above transform into the reduced equation is not needed for showing Theorems~\ref{ip} and \ref{infl} as long as $s<s_1+\frac{1}{2}$.
Moreover, for Theorem~\ref{ip}, we need the resonant/nonresonant decomposition as above only for the nonlinear terms with derivative.
Nevertheless, we have derived the fully reduced equation for later use.
\end{remark}

The first term on the right-hand side of \eqref{eq:hat-w} causes exponential growth of the positive modes of $w(t)$, which is expected to make the Cauchy problem ill-posed.
In fact, we will see that $F_1$ and $F_2$ can be easily estimated, while one can control $F_3$ by an integration by parts in $t$, thanks to the nonresonant property.

Before applying an integration by parts to $F_3$, we decompose it into two parts as
\begin{align*}
F_3[w](k)=&\;\bigl ( \sum _{D_1(k)}+\sum _{D_2(k)}\bigr ) \frac{i\gamma_2 k+\Gamma (k_1+k_2)}{2\pi}e^{it\Phi}\hat{w}(k_1)\bar{\hat{w}}(-k_2)\hat{w}(k_3)\\
=:&\; F_{3,1}[w](k)+F_{3,2}[w](k),
\end{align*}
where
\begin{align*}
D_1(k):=&\; \{ (k_1,k_2,k_3)\in D(k)\,|\, \tfrac{1}{4}|k_2|\le |k_1|,|k_3|\le 4|k_2|\} ,\\
D_2(k):=&\; D(k)\setminus D_1(k),\\
D(k):=&\; \{ (k_1,k_2,k_3)\in \mathbf{Z}^3\,|\, k_1+k_2+k_3=k,\,(k_1+k_2)(k_2+k_3)\neq 0\} .
\end{align*}
We first note that
\begin{equation}
(k_1,k_2,k_3)\in D_1(k)\quad \Rightarrow \quad |k|\lesssim |k_1|\sim |k_2|\sim |k_3|. \label{est:D1}
\end{equation}
Moreover, if we recall \eqref{est:Phi}, then it is not hard to show that
\begin{equation}
(k_1,k_2,k_3)\in D_2(k)\quad \Rightarrow \quad |\Phi (k_1,k_2,k_3)|\gtrsim \max _{1\le j\le 3} \langle k_j \rangle ^2. \label{est:D2}
\end{equation} 
We now rewrite $F_{3,2}$ as
\begin{equation*}
\begin{split}
F_{3,2}[w(t)](k)=&\; \partial _tG[w(t)](k)+H[w(t)](k),\\
G[w(t)](k):=&\; \sum _{D_2(k)}\frac{i\gamma_2 k+\Gamma (k_1+k_2)}{2\pi i\Phi}e^{it\Phi}\hat{w}(t, k_1)\bar{\hat{w}}(t, -k_2)\hat{w}(t, k_3),\\
H[w(t)](k):=&\; -\sum _{D_2(k)}\frac{i\gamma_2 k+\Gamma (k_1+k_2)}{2\pi i\Phi}e^{it\Phi}\partial _t\bigl [ \hat{w}(t, k_1)\bar{\hat{w}}(t, -k_2)\hat{w}(t, k_3)\bigr ].
\end{split}
\end{equation*}
Here, we notice that absolute and uniform-in-$t$ convergence of the summation over $(k_1,k_2,k_3)$ allows us to exchange the order of summation and differentiation in $t$.

So far, we have obtained the following equation on $\hat{w}(t,k)$:
\begin{equation*}
\begin{split}
\partial _t\hat{w}(t,k)
=&\; \frac{\Gamma}{2\pi}  \| u_0\| _{L^2}^2k\hat{w}(t,k)+\partial _tG[w(t)](k)\\
&+F_1[w(t)](k)+F_2[w(t)](k)+F_{3,1}[w(t)](k)+H[w(t)](k).
\end{split}
\end{equation*}
Let us assume that $a:=\frac{\Gamma}{2\pi}\| u_0\| _{L^2(\mathbf{T})}^2 >0$, \mbox{i.e.}, $\| u_0\| _{L^2(\mathbf{T})}\neq 0$.
By \eqref{ic:hat-w} we have
\begin{align*}
\hat{w}(T,k)
&=e^{akT}\hat{w}(0,k)+\int _0^T e^{ak(T-t)}\Big( \partial _tG+F_1+F_2+F_{3,1}+H\Big) [w(t)](k)\,dt\\
&=e^{akT}\hat{u_0}(k)+\Big( G[w(T)](k)-e^{akT}G[u_0](k)\Big) \\
&\quad +\int _0^Te^{ak(T-t)}\big( akG+F_1+F_2+F_{3,1}+H\big) [w(t)](k)\,dt.
\end{align*}
In particular, it holds that
\begin{equation}\label{eq:hat-w2}
\begin{split}
\hat{u}_0(k)
&=e^{-akT}\hat{w}(T,k)-\Big( e^{-akT}G[w(T)](k)-G[u_0](k)\Big) \\
&\quad -\int _0^Te^{-akt}\big( akG+F_1+F_2+F_{3,1}+H\big) [w(t)](k)\,dt.
\end{split}
\end{equation}

\begin{lemma}\label{lem:multilinear}
There exists a constant $R>1$ such that for any $s\ge 1$ the following holds, with the implicit constants in all the estimates being independent on $s$.

(i) We have
\begin{gather}
\sup _{k\in \mathbf{Z}} \; \langle k\rangle ^s\bigl | F_1[f](k)+F_2[f](k)+F_{3,1}[f](k)\bigr |\lesssim R^s\| f\| _{H^{\frac{3}{4}}(\mathbf{T})}^2\| f\| _{H^s(\mathbf{T})},\label{est:F}\\
\sup _{k\in \mathbf{Z}} \; \langle k\rangle ^{s+1}\bigl | G[f](k)\bigr |\lesssim R^s\| f\| _{H^{\frac{1}{4}}(\mathbf{T})}^2\| f\| _{H^s(\mathbf{T})}.\label{est:G}
\end{gather}

(ii) Moreover, if $\hat{w}(\cdot ,k)\in C^1([0,T])$ is a solution to \eqref{eq:hat-w} and $w(t)\in H^s(\mathbf{T})$ for $t\in [0,T]$, then we have
\begin{gather}
\sup _{k\in \mathbf{Z}}\; \langle k\rangle ^{s-1}\bigl | \partial _t\hat{w}(t,k)\bigr |\lesssim R^s\| w(t)\| _{H^{\frac{1}{4}}(\mathbf{T})}^2\| w(t)\| _{H^s(\mathbf{T})},\label{est:w_t}\\
\sup _{k\in \mathbf{Z}}\; \langle k\rangle ^s\bigl | H[w(t)](k)\bigr | \lesssim R^s\| w(t)\| _{H^{\frac{1}{4}}(\mathbf{T})}^2\| w(t)\| _{H^{\frac{1}{2}+}(\mathbf{T})}^2\| w(t)\| _{H^s(\mathbf{T})}.\label{est:H}
\end{gather}
\end{lemma}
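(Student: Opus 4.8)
The plan is to reduce all four bounds to one uniform-in-$k$ estimate for a trilinear convolution, carried out by Cauchy--Schwarz. In each case one inserts the output weight $\langle k\rangle^\sigma$ ($\sigma=s$ for \eqref{est:F} and \eqref{est:H}, $\sigma=s+1$ for \eqref{est:G}, $\sigma=s-1$ for \eqref{est:w_t}) and distributes it by $\langle k\rangle^\sigma\le 3^\sigma\max_j\langle k_j\rangle^\sigma$; this is the \emph{only} source of $s$-dependence, so one may take $R=3$ (all remaining constants, e.g.\ $\sum_k\langle k\rangle^{-3}<\infty$, are $s$-independent). One then lets the largest input frequency, say $k_1$, carry the weight $\langle k_1\rangle^s$, extracts $\|f\|_{H^s}$ by Cauchy--Schwarz in $k_1$ alone, and controls the remaining sum over the two lower frequencies as a bilinear convolution in $\ell^2_{k_1}$ via Young's and H\"older's inequalities. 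The low-regularity norms and their exponents are produced entirely by this last step; the prototype is $\big\||\hat f|*|\hat f|\big\|_{\ell^2}\le\|\hat f\|_{\ell^{4/3}}^2\lesssim\|f\|_{H^{3/4}}^2$, using $\|\langle\,\cdot\,\rangle^{-3/4}\|_{\ell^4}<\infty$.

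For \eqref{est:F} the resonant term $F_1$ is immediate: its derivative sits in the scalar $\sum_{k'}k'|\hat f(k')|^2$, bounded by $\|f\|_{H^{1/2}}^2$, while the cubic $|k|\,|\hat f(k)|^2\hat f(k)$ (from the $i\gamma_2k$ coefficient) is handled pointwise by splitting $\langle k\rangle^{s+1}=\langle k\rangle^{3/2}\langle k\rangle^{s-1/2}$. The nonresonant $F_2$ carries no derivative and fits the scheme directly. The delicate piece, and the one I expect to be the main obstacle, is $F_{3,1}$, supported on $D_1$ where the three frequencies are comparable (\eqref{est:D1}) and the phase is of no use: the full derivative from $i\gamma_2k+\Gamma(k_1+k_2)$ must be absorbed by comparability alone, writing $N\sim\langle k_2\rangle^{3/4}\langle k_3\rangle^{3/4}\langle k_1\rangle^{-1/2}$ and feeding the two $\langle\,\cdot\,\rangle^{3/4}$ factors into the convolution bound above. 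The point is that the naive multiplier estimate (replacing $\sup_k$ by $\sum_k$) diverges here; one must keep the bilinear factor as a genuine convolution, and the exponent $3/4$ is precisely what that convolution affords.

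The terms $G$ and $H$ are supported on $D_2$ and arise from the integration by parts in $t$ performed before the lemma. Here division by $\Phi$ supplies the smoothing, and the essential point is to use the \emph{factorized} form \eqref{est:Phi}, $|\Phi|\sim\langle k_1+k_2\rangle\langle k_2+k_3\rangle\langle k_3+k_1\rangle$, rather than the crude lower bound $|\Phi|\gtrsim\max_j\langle k_j\rangle^2$ of \eqref{est:D2}: the extra decay furnished by the smaller pairwise sums is what makes the convolution sums converge and lowers the admissible regularity to $H^{1/4}$ in \eqref{est:G}. (The coefficient $\Gamma(k_1+k_2)$ shares a factor with $\Phi$ and is thus especially favorable; the $i\gamma_2k$ part gains one derivative from $|\Phi|\gtrsim\max_j\langle k_j\rangle^2$.) Apart from this, \eqref{est:G} uses the same Cauchy--Schwarz bookkeeping.

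For \eqref{est:w_t} I would read off the terms of \eqref{eq:hat-w}. The linear term $\tfrac{\Gamma}{2\pi}\|u_0\|_{L^2}^2 k\hat w(k)$ already realizes the claimed right-hand side, since $\langle k\rangle^{s-1}|k||\hat w(k)|\le\|w(t)\|_{H^s}$ and $\|u_0\|_{L^2}^2=\|w(t)\|_{L^2}^2\le\|w(t)\|_{H^{1/4}}^2$; the nonlinear terms are estimated exactly as above but against the one-derivative-weaker weight $\langle k\rangle^{s-1}$, whose extra half-derivative of room inserts a factor $\langle k_1\rangle^{-1/2}$ into the convolution and shifts the Young exponent just enough to replace $H^{3/4}$ by $H^{1/4}$. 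Finally \eqref{est:H} is the most computational step: using $|\Phi|^{-1}\lesssim\max_j\langle k_j\rangle^{-2}$ on $D_2$ and estimating the single differentiated factor $\partial_t\hat w$ by the already-proven \eqref{est:w_t} (which carries $\|w\|_{H^{1/4}}^2\|w\|_{H^s}$), one absorbs the two undifferentiated factors in $\|\hat w\|_{\ell^1}\lesssim\|w\|_{H^{1/2+}}$, yielding the quintic bound $\|w\|_{H^{1/4}}^2\|w\|_{H^{1/2+}}^2\|w\|_{H^s}$; a case distinction on which frequency is largest ensures the $H^s$ weight is placed correctly.
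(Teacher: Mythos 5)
Your overall architecture --- distributing the output weight over the largest input frequency at cost $R^s$, Cauchy--Schwarz in that frequency, bilinear convolution bounds for the remaining two, the crude lower bound on $\Phi$ for $G$ and $H$, and feeding \eqref{est:w_t} into $H$ --- is exactly the paper's. But your treatment of $F_{3,1}$, the term you yourself single out as the main obstacle, does not close. You write $N\sim\langle k_2\rangle^{3/4}\langle k_3\rangle^{3/4}\langle k_1\rangle^{-1/2}$ and then ``feed the two $\langle\,\cdot\,\rangle^{3/4}$ factors into'' your prototype $\||\hat f|*|\hat f|\|_{\ell^2}\le\|\hat f\|_{\ell^{4/3}}^2\lesssim\|f\|_{H^{3/4}}^2$. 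This double-counts the derivative budget: in the prototype the $3/4$ derivatives are entirely consumed in passing from $\ell^{4/3}$ to $\ell^2$ (via $\|\langle\,\cdot\,\rangle^{-3/4}\|_{\ell^4}<\infty$), so nothing remains to pay for multiplier weights. Concretely, after your split the bilinear piece is $(\langle\,\cdot\,\rangle^{3/4}|\hat f|)*(\langle\,\cdot\,\rangle^{3/4}|\hat f|)$; Young gives $\|\,\cdot\,\|_{\ell^2}\le\|\langle\,\cdot\,\rangle^{3/4}\hat f\|_{\ell^{4/3}}^2$, but $\|\langle\,\cdot\,\rangle^{3/4}\hat f\|_{\ell^{4/3}}\lesssim\|f\|_{H^{3/4}}$ is \emph{false}: setting $\hat f(k)=\langle k\rangle^{-3/4}a_k$ it would read $\|a\|_{\ell^{4/3}}\lesssim\|a\|_{\ell^2}$, and H\"older in fact costs $\|f\|_{H^{1+}}$ per factor. (Nor can you instead use $\|g*g\|_{\ell^2}\lesssim\|g\|_{\ell^2}^2$ with $g=\langle\,\cdot\,\rangle^{3/4}|\hat f|$; take $g$ the indicator of $[N,2N]$.) The correct allocation, which is the paper's, uses comparability on $D_1$ to split only \emph{half} a derivative onto each low factor, $\langle k\rangle^s|i\gamma_2 k+\Gamma(k_1+k_2)|\lesssim R^s\langle k_1\rangle^{1/2}\langle k_2\rangle^{1/2}\langle k_3\rangle^s$, leaving exactly $1/4$ for the convolution, which is then closed \emph{at the endpoint} by Plancherel and the Sobolev embedding $H^{1/4}(\mathbf{T})\hookrightarrow L^4(\mathbf{T})$: $\|(\langle\,\cdot\,\rangle^{1/2}|\hat f|)*(\langle\,\cdot\,\rangle^{1/2}|\hat f|)\|_{\ell^2}\sim\|\mathcal{F}^{-1}[\langle\,\cdot\,\rangle^{1/2}|\hat f|]\|_{L^4}^2\lesssim\|f\|_{H^{3/4}}^2$. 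Note also that your purely Fourier-side H\"older/Young machinery cannot reach any of the stated endpoint exponents even after the above repair, since $\|\hat f\|_{\ell^{4/3}}\lesssim\|f\|_{H^{1/4}}$ fails (it requires $H^{1/4+}$); this is precisely why the paper routes every bilinear piece through the $L^4$ embedding. These epsilon losses are harmless for the paper's later applications, but as written your argument proves a strictly weaker statement than the lemma.

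Two smaller points. First, your claim that \eqref{est:G} essentially requires the factorized form \eqref{est:Phi} of $\Phi$, with the pairwise-sum decay, is inaccurate: on $D_2$ the crude bound \eqref{est:D2} already gives $\langle k\rangle|i\gamma_2 k+\Gamma(k_1+k_2)|\lesssim|\Phi|$, after which $G$ is estimated exactly like $F_2$ --- this is what the paper does; the factorized lower bound is used only later, in the refined Lemma~\ref{lem:multilinear'} for the norm-inflation section. Second, your sketch for \eqref{est:H} matches the paper's ($\ell^1$--$\ell^\infty$ H\"older with \eqref{est:w_t} applied to the differentiated factor, with $\|\hat w\|_{\ell^1}\lesssim\|w\|_{H^{1/2+}}$ on the undifferentiated ones), but in the case where the weight $\langle k\rangle^{s-1}$ lands on an undifferentiated factor one gets $\|w\|_{H^{s-\frac{1}{2}+}}\|w\|_{H^{\frac{1}{2}+}}\|w\|_{H^1}\|w\|_{H^{\frac{1}{4}}}^2$, and an interpolation step is still needed to reach the stated right-hand side; your ``case distinction on which frequency is largest'' elides this. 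Your handling of $F_1$, $F_2$, and of the linear term in \eqref{est:w_t} via $L^2$ conservation is fine.
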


\begin{proof}
We begin with \eqref{est:F}.
$F_1$ is easily estimated as
\begin{equation*}
\langle k\rangle ^s\bigl | F_1[f](k)\bigr |\lesssim \bigl ( \| \langle k\rangle ^{\frac{1}{2}}\hat{f}\| _{\ell ^2}^2+\| \langle k\rangle ^{\frac{1}{2}}\hat{f}\| _{\ell ^\infty}^2\bigr ) \| \langle k\rangle ^s\hat{f}\| _{\ell ^\infty} \lesssim \| \langle k\rangle ^{\frac{1}{2}}\hat{f}\| _{\ell ^2}^2\| \langle k\rangle ^s\hat{f}\| _{\ell ^2}.
\end{equation*}
For estimate on $F_2$ we notice that $\langle k\rangle ^s\le R^s\big( \langle k_1\rangle ^s+\langle k_2\rangle ^s+\langle k_3\rangle ^s\big)$ and use Sobolev embedding to obtain that
\begin{equation*}
\langle k\rangle ^s\bigl | F_2[f](k)\bigr |\lesssim R^s\| \mathcal{F}^{-1}[|\hat{f}|]\| _{L^4(\mathbf{T})}^2\| f\|_{H^s(\mathbf{T})}
\lesssim R^s\| f\| _{H^{\frac{1}{4}}}^2\| f\| _{H^s}.
\end{equation*}
By \eqref{est:D1}, it holds that $\langle k\rangle ^s|i\gamma_2 k+\Gamma(k_1+k_2)|\lesssim R^s\langle k_1\rangle ^{\frac{1}{2}}\langle k_2\rangle ^{\frac{1}{2}}\langle k_3\rangle ^s$ in $D_1(k)$.
Hence $F_{3,1}$ is estimated by Sobolev embedding as
\begin{equation*}
\langle k\rangle ^s\bigl | F_{3,1}[f](k)\bigr |\lesssim R^s\| \mathcal{F}^{-1}[\langle k\rangle ^{\frac{1}{2}}|\hat{f}|]\| _{L^4}^2\| f\| _{H^s}\lesssim R^s\| f\| _{H^{\frac{3}{4}}}^2\| f\| _{H^s}.
\end{equation*}

Similarly to the estimate on $F_2$ above, \eqref{est:w_t} can be shown by the equation \eqref{eq:hat-w}, the H\"older inequality and Sobolev embedding.

To show \eqref{est:G} and \eqref{est:H} we recall \eqref{est:D2}, which implies that $\langle k\rangle |i\gamma_2 k+\Gamma(k_1+k_2)|\lesssim |\Phi (k_1,k_2,k_3)|$ in $D_2(k)$.
Then, proof of \eqref{est:G} is similar to the estimate on $F_2$ above.
Finally, by \eqref{est:w_t} we have
\begin{align*}
&\langle k\rangle ^s\bigl | H[w(t)](k)\bigr |\\
&\lesssim R^s\Big( \| \langle k\rangle ^{s-1}\hat{w}(t)\| _{\ell ^1}\| \hat{w}(t)\| _{\ell ^1}\| \partial _t\hat{w}(t)\| _{\ell ^\infty} +\| \hat{w}(t)\| _{\ell ^1}^2\| \langle k\rangle ^{s-1}\partial _t\hat{w}(t)\| _{\ell ^\infty} \Big) \\
&\lesssim R^s\Bigl ( \| w(t)\| _{H^{s-\frac{1}{2}+}}\| w(t)\| _{H^{\frac{1}{2}+}}\| w(t)\| _{H^1}+\| w(t)\| _{H^{\frac{1}{2}+}}^2\| w(t)\| _{H^s}\Bigr ) \| w(t)\| _{H^{\frac{1}{4}}}^2,
\end{align*}
and then \eqref{est:H} follows from interpolation.
\end{proof}

\begin{remark} \label{elliptic}
The nonresonance relation \eqref{est:D2} is used for the estimate \eqref{est:H} in Lemma \ref{lem:multilinear}, which is based on the dispersive nature of equation \eqref{eq:hat-w}.
It seems difficult to prove the ill-posedness simply by the elliptic regularity theorem of the Cauchy-Riemann operator in \eqref{eq:hat-w} without exploiting the dispersive nature.
Indeed, the point of the ill-posedness proof based on the elliptic regularity is to prove that a solution to \eqref{eq:hat-w} is in $C^\infty ((-T, T) \times \mathbf{T})$ for some $T > 0$, which is a contradiction to the fact that the initial data is in $H^s(\mathbf{T})$ and not in $C^\infty(\mathbf{T})$.
The Cauchy-Riemann operator is the elliptic operator of first order, so we can gain only one derivative over the inhomogeneous term.
Namely, if $u$ satisfies
\[
   (\partial_t - i \partial_x)u = f \ \textrm{in} \ \mathscr{D}'(\Omega), \quad f \in H^s(\Omega), 
\]
then for any $\Omega' \subset \subset \Omega$, we have $u \in H^{s+1}(\Omega')$,
where $\Omega$ is an open subset in $\mathbf{R} \times \mathbf{T}$.
But the second term $F_1$ and the fourth term $F_3$ on the right side of \eqref{eq:hat-w} contain the first derivative of the unknown function.
Therefore, without dispersion, we can not expect that when $w \in H^s((-T, T) \times \mathbf{T})$, the elliptic regularity property of equation \eqref{eq:hat-w} yields $w \in H^{s_0}((-T/2, T/2) \times \mathbf{T})$ for some $s_0 > s$.
Specifically, we need to use the nonresonance relation relevant to $\Phi$ (see \eqref{def:Phi}) for the estimate of the following partial sum appearing in $F_3$:
\[
   \sum_{\begin{subarray}{c} k_1 + k_2 + k_3 = k \\ (k_1 + k_2)(k_2 + k_3) \neq 0 \\ k \sim k_1, \ |k| \gg |k_2|, |k_3| \end{subarray}} k e^{it \Phi} \hat w(k_1) \bar{\hat w}(-k_2) \hat w (k_3).
\]
\end{remark}

Since we consider a solution $u$ in $C([0,T];H^s(\mathbf{T}))$, we have
\begin{equation*}
E^+_{s'}(T):=\sup _{t\in [0,T]}\| u(t)\|_{H^{s'}}<\infty ,\qquad s'\le s.
\end{equation*}
From \eqref{eq:hat-w2} and Lemma~\ref{lem:multilinear}, together with $|\hat{w}(t,k)|=|\hat{u}(t,k)|$, we see that 
\begin{equation*}
\begin{split}
&\langle k\rangle ^{s+1}|\hat{u}_0(k)|\\
&\le e^{-akT}\langle k\rangle ^{s+1}|\hat{w}(T,k)|+\langle k\rangle ^{s+1}\Big( |G[w(T)](k)|+|G[u_0](k)|\Big) \\
&\quad +\langle k\rangle ^{s+1}\sup _{t\in [0,T]}\Big| \big( akG+F_1+F_2+F_{3,1}+H\big) [w(t)](k)\Big| \int _0^Te^{-akt}\,dt\\
&\le e^{-akT}\langle k\rangle E_s^+(T)+CR^sE_1^+(T)^2E_s^+(T) \\
&\quad +C\langle k\rangle R^s\Big( aE_1^+(T)^2+E_1^+(T)^2+E_1^+(T)^4\Big) E_s^+(T)\int _0^Te^{-akt}\,dt.
\end{split}
\end{equation*}
Hence, for any $k>0$ we have
\begin{equation*}
\begin{split}
\langle k\rangle ^{s+1}|\hat{u}_0(k)|
&\le e^{-akT}\langle k\rangle E_s^+(T)\\
&\quad +CR^s\Big( E_1^+(T)^2+\frac{E_1^+(T)^2+E_1^+(T)^4}{\| u_0\| _{L^2}^2}\Big) E_s^+(T).
\end{split}
\end{equation*}
Finally, using
\begin{equation*}
\sup _{x>0}(1+x)e^{-\alpha x}\le \max \{ 1,\alpha ^{-1}\}\qquad (\alpha >0),
\end{equation*}
and the above estimate, we obtain
\begin{equation}\label{est:hat-w}
\begin{split}
&\langle k\rangle ^{s+1}|\hat{u}_0(k)|\\
&\lesssim \Big\{ 1+\frac{1}{\| u_0\|_{L^2}^2T}+R^s\Big( E_1^+(T)^2+\frac{E_1^+(T)^2+E_1^+(T)^4}{\| u_0\|_{L^2}^2}\Big) \Big\} E_s^+(T)\\
&\lesssim R^s\big\langle \|u_0\|_{L^2}^{-2}\big\rangle \big\langle T^{-1}\big\rangle \big\langle E_1^+(T)\big\rangle ^4E_s^+(T)
\end{split}
\end{equation}
for any $k>0$, where the implicit constant is independent of $s$, $T$ and $k$.

We are now in a position to prove Theorems~\ref{ip} and \ref{ip+}.

\begin{proof}[Proof of Theorem~\ref{ip}]
Let $s,s_1$ be such that $1\le s_1\le s<s_1+1$.
We take any $s_0\in (s,s_1+1)$ and choose initial data $u_0$ defined by
\begin{equation}\label{def:u_0}
\hat{u}_0(k):=\begin{cases}
\langle k \rangle ^{-s_0}\quad &\text{if $k=\pm 2^j$ for some $j\in \mathbf{N}$,}\\
0 &\text{otherwise,}
\end{cases}
\end{equation}
which is clearly in $H^s(\mathbf{T})$.
Suppose for contradiction that there existed positive time $T>0$ and a solution $u\in C([0,T];H^{s_1}(\mathbf{T}))$ to the Cauchy problem \eqref{3NLSR}--\eqref{ic} on $[0,T]$.
Since $E^+_{s_1}(T)<\infty$, \eqref{est:hat-w} would imply that 
\begin{equation*}
\sup _{k>0}~\langle k\rangle ^{s_1+1}|\hat{u}_0(k)|<\infty ,
\end{equation*}
which is a contradiction.
Therefore, the Cauchy problem with such initial data has no solution forward in time.
From Remark~\ref{remsol} (i) and the fact that $\hat{\bar{u}}_0(k)=\bar{\hat{u}}_0(-k)$, we obtain the corresponding result on backward solutions.
\end{proof}

\begin{proof}[Proof of Theorem~\ref{ip+}]
We see (i) by the estimate 
\begin{equation}\label{1+}
\| P_+u_0\| _{H^{s+\frac{1}{2}-}}\lesssim R^s\big\langle \|u_0\|_{L^2}^{-2}\big\rangle \big\langle T^{-1}\big\rangle \big\langle E_1^+(T)\big\rangle ^4E_s^+(T),
\end{equation}
which follows from \eqref{est:hat-w} and the Cauchy-Schwarz inequality.
Similar result holds for negative time:
If $u\in C([-T,0];H^{\frac{1}{2}+}(\mathbf{T}))$ is a solution to \eqref{3NLSR}--\eqref{ic} such that 
\begin{equation*}
u(t)\in H^s(\mathbf{T})\quad (t\in [-T,0]),\qquad E_s^-(T):=\sup_{t\in [-T,0]}\| u(t)\| _{H^s}<\infty ,
\end{equation*}
then
\begin{equation}\label{1-}
\| P_-u_0\| _{H^{s+\frac{1}{2}-}}\lesssim R^s\big\langle \|u_0\|_{L^2}^{-2}\big\rangle \big\langle T^{-1}\big\rangle \big\langle E_1^-(T)\big\rangle ^4E_s^-(T),
\end{equation}
which shows (ii).
In particular, if $u$ is a solution on $[-T,T]$, we have
\begin{equation}\label{1}
\| u_0\| _{H^{s+\frac{1}{2}-}}\lesssim R^s\big\langle \|u_0\|_{L^2}^{-2}\big\rangle \big\langle T^{-1}\big\rangle \big\langle E_1(T)\big\rangle ^4E_s(T),
\end{equation}
where $E_s(T):=\max \{ E_s^+(T),\,E_s^-(T)\}$.
This last estimate \eqref{1} can be iterated to show the $H^\infty$ regularity in the interior of the interval.

Moreover, \eqref{1} yields precise bounds on higher Sobolev norms of the initial data as follows.
In fact, \eqref{1} implies 
\begin{equation*}
\| u_0\| _{H^{s+\frac{1}{2}-}}\le C_0R^{s-1}\big\langle T^{-1}\big\rangle E_s(T),
\end{equation*}
where $C_0:=C\langle \|u_0\|_{L^2}^{-2}\rangle \langle E_1(T)\rangle^4$ does not depend on $s$. 
Since $\| u(t)\| _{L^2}$ is conserved, we use this estimate with $s=1+(n-k)(\frac{1}{2}-)$ at the $k$-th iteration for $1\le k\le n$ and obtain that for any $n\in \mathbf{N}$,
\begin{equation*}
\begin{split}
\| u_0\| _{H^{1+n(\frac{1}{2}-)}}&\le C_0R^{(n-1)(\frac{1}{2}-)}\big\langle (\tfrac{T}{n})^{-1}\big\rangle E_{1+(n-1)(\frac{1}{2}-)}(\tfrac{T}{n})\\
&\le \cdots \le \prod _{k=1}^n\Big[ C_0R^{(n-k)(\frac{1}{2}-)}\big\langle (\tfrac{T}{n})^{-1}\big\rangle \Big] 
\cdot E_1(T). 
\end{split}
\end{equation*}
As a consequence, we have (iii).
\end{proof}

We next consider the case that $\alpha_1 = 0$ and $\alpha_2 \neq 0$, that is, \eqref{3NLSR} is the Schr\"odinger equation with derivative nonlinearity.
We have the same ill-posedness result as Theorem \ref{ip}.

\begin{proposition}\label{prop:NLS}
Assume $\alpha_1= 0$ and $\alpha_2 \neq 0$.
Then Theorem \ref{ip} still holds.
\end{proposition}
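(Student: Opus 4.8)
The plan is to rerun the reduction and the estimates of Section~\ref{sec:il} almost verbatim, isolating the two places where $\alpha_1\neq 0$ was used and repairing the one that genuinely breaks. First I would recompute the phase: setting $\alpha_1=0$ in \eqref{def:Phi} makes the cubic terms cancel, leaving
\[ \Phi(k_1,k_2,k_3)=2\alpha_2(k_1+k_2)(k_2+k_3), \]
so that \eqref{cond:Phi=0} still holds, now \emph{unconditionally} (the arithmetic hypothesis $2\alpha_2/3\alpha_1\notin\mathbf{Z}$ becoming vacuous). The gauge reductions $u\mapsto v\mapsto w$ used only \eqref{cond:Phi=0}, so the equation \eqref{eq:hat-w} and the identity \eqref{eq:hat-w2} are unchanged; in particular the exponential-growth coefficient $\frac{\Gamma}{2\pi}\|u_0\|_{L^2}^2k$ survives intact and the ill-posedness mechanism is the same. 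It then suffices to re-establish the pointwise bound \eqref{est:hat-w} with the full weight $\langle k\rangle^{s+1}$ for $k>0$, after which the lacunary datum \eqref{def:u_0} gives the contradiction exactly as in the proof of Theorem~\ref{ip} (and Remark~\ref{remsol} supplies the backward-in-time case).

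Next I would revisit Lemma~\ref{lem:multilinear}. The bounds \eqref{est:F} and \eqref{est:w_t} never use the size of $\Phi$, only the resonant/balanced structure, so they carry over with no change and the pieces $F_1,F_2,F_{3,1}$ remain harmless. The difference is entirely in \eqref{est:Phi}: for $\alpha_1=0$ one now has only $|\Phi|\sim\langle k_1+k_2\rangle\langle k_2+k_3\rangle$, two factors instead of three, so on the unbalanced region $D_2$ the lower bound \eqref{est:D2} degrades from $\gtrsim\max_j\langle k_j\rangle^2$ to merely $\gtrsim\max_j\langle k_j\rangle$. Consequently the integration-by-parts terms $G$ and $H$ lose exactly one derivative relative to the $\alpha_1\neq 0$ case, and \eqref{est:G}, \eqref{est:H} as stated fail.

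To localize the failure I would decompose the derivative multiplier, e.g.
\[ i\gamma_2 k+\Gamma(k_1+k_2)=(i\gamma_2+\Gamma)(k_1+k_2)+i\gamma_2 k_3, \]
and use the $k_1\leftrightarrow k_3$ symmetry of $\Phi$ and of the cubic. In the region where the output is high and $k_2+k_3$ is comparatively small, dividing the first piece by $\Phi$ gives the bounded but non-decaying multiplier $\frac{i\gamma_2+\Gamma}{2\alpha_2(k_2+k_3)}$ (since $(k_1+k_2)\mid\Phi$), while the second piece still gains a full power; in the mirror region the roles interchange. The surviving, obstructive part therefore corresponds in physical space to the nonresonant part of the first-order term $(i\gamma_2+\Gamma)u\,\partial_x(|u|^2)$ — the Raman term together with the $u\partial_x|u|^2$ half of $\gamma_2\partial_x(|u|^2u)$. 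This is the main obstacle: for $\alpha_1=0$ this term is of the \emph{same} order in $k$ as the elliptic coefficient $ak$, hence not subordinate to it, and a first-order normal form does not help, since the residual zeroth-order quantity $G_{\mathrm{tr}}\sim u\,\partial_x^{-1}(|u|^2)$ is returned to order one the moment it is multiplied by $ak$ in \eqref{eq:hat-w2}.

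I expect the resolution to come from fine structure rather than crude size. The coefficient $\partial_x(|u|^2)$ is real and mean-free, so the term conserves $L^2$; moreover the endpoint boundary contribution of $G_{\mathrm{tr}}$ at $t=T$ in \eqref{eq:hat-w2} is already tamed by the factor $e^{-akT}$, which absorbs the extra $\langle k\rangle$ because $\sup_{k>0}\langle k\rangle e^{-akT}<\infty$. For the remaining $t=0$ boundary term and the time integral $ak\int_0^Te^{-akt}G_{\mathrm{tr}}$, I would combine the two exponents into $e^{t(-ak+i\Phi)}$ and integrate by parts against this genuinely nondegenerate exponential ($\Phi\neq0$ off the resonance), the prefactor $\frac{ak}{-ak+i\Phi}$ being bounded; equivalently, I would remove the transport term by an additional change of unknown, relying on the real, mean-free nature of the coefficient to prevent any competing exponential growth from overtaking the dominant $ak$. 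This combined-exponent step is the crux; once it yields \eqref{est:hat-w} with weight $\langle k\rangle^{s+1}$ on the positive modes, the argument closes exactly as in Theorem~\ref{ip}.
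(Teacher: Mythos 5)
You correctly reduce the problem to the phase $\Phi=2\alpha_2(k_1+k_2)(k_2+k_3)$, correctly note that \eqref{eq:hat-w}--\eqref{eq:hat-w2} survive, and correctly locate the obstruction in the high$\times$low$\to$high interactions, where \eqref{est:D2} degrades to $|\Phi|\gtrsim \max_j\langle k_j\rangle$. But your proposed repair at the self-declared crux does not close. In the bad region $|k_1|\sim k\gg |k_2|,|k_3|$ one has $|\Phi|\sim k\,\langle k_2+k_3\rangle$, so after your combined-exponent integration by parts the surviving multiplier is $\bigl|\frac{i\gamma_2 k+\Gamma(k_1+k_2)}{-ak+i\Phi}\bigr|\sim \frac{1}{a+|k_2+k_3|}$, which is bounded but has \emph{no decay in $k$}. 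Boundedness is worthless here: the plain estimate $\int_0^Te^{-akt}\,dt\lesssim (ak)^{-1}$ applied to \eqref{eq:hat-w2} \emph{without any} integration by parts already cancels the one derivative in the multiplier, and it yields only $\sup_{k>0}\langle k\rangle^{s_1}|\hat{u}_0(k)|\lesssim E^+_{s_1}(T)$ --- an inequality trivially satisfied by every $u_0\in H^{s_1}$, hence producing no contradiction with the lacunary data \eqref{def:u_0}, where $s_0>s\ge s_1$. To run the argument of Theorem~\ref{ip} you must gain a weight strictly above $s_0$, i.e.\ genuinely recover (almost) one full power of $k$, and $\frac{ak}{-ak+i\Phi}$ being $O(1)$ cannot do that. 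Your fallback, ``remove the transport term by an additional change of unknown,'' points in the right direction but is not the right target: the constant-coefficient transport $P_0(|u|^2)\partial_x$ is precisely the term you must \emph{keep} (it carries the ill-posedness mechanism $\frac{\Gamma}{2\pi}\|u_0\|_{L^2}^2 k$); what must be removed is the variable-coefficient part of the $|u|^2\partial_xu$-type nonlinearity, and no unimodular substitution acting diagonally on Fourier modes can do this.

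The paper's actual proof carries out exactly this removal by a Hayashi--Ozawa/Herr-type gauge transformation: $U=\mathcal{G}_\lambda(u)\,u$ with $\mathcal{G}_\lambda(u)=\exp\bigl(\lambda\,\partial^{-1}(|u|^2)\bigr)$ and $\lambda=\frac{2\gamma_2-i\Gamma}{2i\alpha_2}$ --- note $\lambda\notin i\mathbf{R}$ since $\Gamma>0$, so the gauge is non-unimodular and its mapping properties ($\|\mathcal{G}_\lambda(u)\|_{L^\infty}\le e^{\Re\lambda\|u\|_{L^2}^2}$, $\mathcal{G}_\lambda(u)\in H^{s+1}$ for $u\in H^s$, $s\ge1$) must be, and are, checked. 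The coefficient $2\gamma_2-i\Gamma$ is exactly the combination your multiplier decomposition isolated, so your diagnosis of \emph{which} term obstructs was accurate. After gauging, the equation \eqref{eq:U} contains only the constant-coefficient transport (giving the growth factor in \eqref{eq:hat-W}), zeroth-order remainders, and the derivative term $-\gamma_2\,uU\partial_x\bar{u}$, in which the derivative sits on the \emph{conjugate} factor; for that term the new splitting $D(k)=D_1'(k)\cup D_2'(k)$ works because on $D_2'$ one has $|\Phi|\sim\langle k_2\rangle^2=\max_j\langle k_j\rangle^2$, i.e.\ the full two powers of \eqref{est:D2} are restored precisely where they are needed. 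One further step, absent from your sketch, is then unavoidable: since the gauge mixes Fourier modes, the contradiction must be derived for $\hat{U}(0,\cdot)$ rather than $\hat{u}_0$; the paper takes small data $\epsilon u_0$ with $u_0$ as in \eqref{def:u_0} and uses $\mathcal{G}_\lambda(\epsilon u_0)\in H^{s+1}$ together with $|\frac{1}{\sqrt{2\pi}}\hat{\mathcal{G}}(0)-1|\ll1$ to show $|\hat{U}(0,k)|\ge\frac{\epsilon}{2}\langle k\rangle^{-s_0}$ along $k=2^j$, after which the argument of Theorem~\ref{ip} applies verbatim.
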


\begin{remark}
It is likely that one can show analogues of Theorem~\ref{ip+} and Theorems~\ref{infl}, \ref{infl+} below for the $\alpha_1= 0$ case by the same idea with some elaborations.
We will not pursue these problems to avoid technical issues.
\end{remark}

\begin{proof}
For $s_1\ge 1$ and a solution $u\in C([0,T];H^{s_1}(\mathbf{T}))$ to \eqref{3NLSR}--\eqref{ic} on $[0,T]$,
the function $\hat{v}(t,k):=e^{i\alpha_2 k^2t}\hat{u}(t,k)$ satisfies the same Cauchy problem \eqref{eq:hat-v}--\eqref{ic:hat-v} but with
\[ \Phi (k_1,k_2,k_3)=2\alpha _2(k_1+k_2)(k_2+k_3)\]
instead of \eqref{def:Phi}.
In this case, the lower bound \eqref{est:D2} of $\Phi$ in $D_2(k)$ is replaced with
\[ |\Phi (k_1,k_2,k_3)|\gtrsim \max _{1\le j\le 3}\langle k_j\rangle ,\]
which is not sufficient to recover the derivative loss for the high-low interactions (such as $(k_1,k_2,k_3)\in D(k)$ with $|k_1|\gg |k_2|,|k_3|$).
To overcome this difficulty, we will exploit a suitable gauge transformation (cf.~Hayashi and Ozawa \cite{HO}; see Herr \cite{Herr} for the gauge transformation in the periodic case).

We use the following notation.
For a function $f:\mathbf{T}\to \mathbf{C}$, 
\begin{gather*}
P_0f:=\frac{1}{\sqrt{2\pi}}\hat{f}(0),\qquad P_{\neq 0}f(x):=f(x)-P_0f,\\
\partial^{-1}f(x):=\frac{1}{\sqrt{2\pi}}\sum _{k\in \mathbf{Z}\setminus \{ 0\}}\frac{1}{ik}\hat{f}(k)e^{ikx}=\frac{1}{2\pi}\int _{\mathbf{T}}\int _z^xP_{\neq 0}f(y)\,dy\,dz.
\end{gather*}
Note that $\partial _xP_{\neq 0}=\partial _x$ and $\partial _x\partial^{-1}=\partial^{-1}\partial_x=P_{\neq 0}$.
Define
\[ \mathcal{G}_\lambda (f)(x):=\exp \bigl ( \lambda \partial^{-1}(|f|^2)(x)\bigr ),\qquad U(t,x):=\mathcal{G}_\lambda (u(t))(x)u(t,x)\]
with
\[ \lambda :=\frac{2\gamma_2-i\Gamma}{2i\alpha_2}\in \mathbf{C}\setminus i\mathbf{R}.\]
Here, we notice that $\| \mathcal{G}_\lambda (u)\| _{L^\infty}\le \exp \bigl ( \Re \lambda \| u\| _{L^2}^2\bigr )$, and that
\[ u\in H^s(\mathbf{T}),\quad s\ge 1\qquad \Longrightarrow \qquad \mathcal{G}_\lambda (u)\in H^{s+1}(\mathbf{T}).\] 

A calculation shows that $U(t,x)$ (formally) solves
\begin{equation}
\partial _tU=i\alpha_2 \partial _x^2U+(2\gamma_2 -i\Gamma )P_0(|u|^2)\partial _xU-\gamma_2 uU\partial _x\bar{u}+\mathcal{R}(u,U) \label{eq:U}
\end{equation}
with
\begin{align*}
\mathcal{R}(u,U):=&\;\Bigl [ i\gamma_1 |u|^2+\lambda \Bigl \{ \frac{3}{2}\gamma_2 P_{\neq 0}(|u|^4)-i\alpha_2 \lambda \bigl ( P_{\neq 0}(|u|^2)\bigr ) ^2 \\
&\qquad -2i\alpha_2 \lambda P_0(|u|^2)P_{\neq 0}(|u|^2)+2\alpha_2 \Im P_0(\bar{u}\partial_x u)\Bigr \} \Bigr ] U.
\end{align*}
In fact, one can show in a similar way to the proof of Lemma~\ref{lem:L2law} that \eqref{eq:U} holds in $C([0,T];H^{s_1-2}(\mathbf{T}))$. 
The second term in the right-hand side of \eqref{eq:U} is peculiar to the periodic problem. 
In the real line case, this term does not appear and it is known that the Cauchy problem is well-posed (see Hayashi and Ozawa \cite{HO} and Chihara \cite{Ch1}).

Then, $\hat{W}(t,k):=\exp \bigl ( i(\alpha_2 k^2-\frac{\gamma_2}{\pi}\| u_0\| _{L^2}^2k)t\bigr ) \hat{U}(t,k)$ solves
\begin{equation}\label{eq:hat-W}
\begin{split}
&\partial _t\hat{W}(k)=\frac{\Gamma}{2\pi}\| u_0\| _{L^2}^2k\hat{W}(k)-\frac{i\gamma_2}{2\pi}\sum _{\begin{smallmatrix} k_1+k_2+k_3=k\\ (k_1+k_2)(k_2+k_3)\neq 0\end{smallmatrix}}e^{i\Phi t}k_2\hat{w}(k_1)\bar{\hat{w}}(-k_2)\hat{W}(k_3)\\
&+\frac{i\gamma_2}{2\pi}\Bigl [ \bigl ( \sum _{k'\in \mathbf{Z}}k'|\hat{w}(k')|^2\bigr )\hat{W}(k)+\bigl ( \sum _{k'\in \mathbf{Z}}k'\hat{W}(k')\bar{\hat{w}}(k')\bigr ) \hat{w}(k) + k|\hat{w}(k)|^2\hat{W}(k) \Bigr ]\\
&+\exp \bigl ( i(\alpha_2 k^2-\frac{\gamma_2}{\pi}\| u_0\| _{L^2}^2k)t\bigr ) \hat{\mathcal{R}}(u,U),
\end{split}
\end{equation}
where $\hat{w}(t,k):=\exp \bigl ( i(\alpha_2 k^2-\frac{\gamma_2}{\pi}\| u_0\| _{L^2}^2k)t\bigr ) \hat{u}(t,k)$.

The last two terms in the right-hand side of \eqref{eq:hat-W} can be easily estimated as $F_1$ and $F_2$ in \eqref{eq:hat-w}.
To treat the second term, we introduce the decomposition $D(k)=D_1'(k)\cup D_2'(k)$ which is different from the previous one:
\begin{align*}
D_1'(k):=&\; \{ (k_1,k_2,k_3)\in D(k)\,|\, |k_2|\le 4|k_1|~~\text{or}~~|k_2|\le 4|k_3|\} ,\\
D_2'(k):=&\; D(k)\setminus D_1'(k).
\end{align*}
By observing that
\begin{align*}
(k_1,k_2,k_3)\in D_1'(k)&\quad \Rightarrow \quad |k|\lesssim \max \{ |k_1|,\,|k_3|\} ,\\
(k_1,k_2,k_3)\in D_2'(k)&\quad \Rightarrow \quad |\Phi (k_1,k_2,k_3)|\sim \langle k_2 \rangle ^2= \max _{1\le j\le 3} \langle k_j \rangle ^2,
\end{align*}
we can show analogous estimates to \eqref{est:F}--\eqref{est:H} and obtain an estimate corresponding to \eqref{est:hat-w}:
\begin{align*}
\sup _{k>0}~\langle k\rangle ^{s_1+1}|\hat{U}(0,k)|\le C,
\end{align*}
where the constant $C>0$ depends on the parameters in the equation, $s_1$, $\| u_0\| _{L^2}^{-1}$, $T^{-1}$, and $\sup _{t\in [0,T]}\| (u,U,w,W)(t)\| _{H^{s_1}}$.
Note that $\| w\| _{H^{s_1}}=\| u\| _{H^{s_1}}$, $\| W\| _{H^{s_1}}=\| U\| _{H^{s_1}}$ and $\| U\| _{H^{s_1}}$ can be estimated in terms of $\| u\| _{H^{s_1}}$.

Finally, we take initial data $\epsilon u_0\in H^s(\mathbf{T})$ with $u_0$ as in \eqref{def:u_0} and $0<\epsilon \ll 1$.
Note that $\| \lambda \partial ^{-1}(|\epsilon u_0|^2)\| _{L^\infty}\ll 1$ and
\[ \mathrm{supp}\;\hat{u}_0\cap \{ k>0\} ~=~\{ 1,2,4,8,\dots \} .\]
Then, since $\mathcal{G}=\mathcal{G}_\lambda (\epsilon u_0)\in H^{s+1}$, we have for any $k\in \mathrm{supp}\;\hat{u}_0\cap \{ k>0\}$,
\begin{align*}
\big| \widehat{\mathcal{G}u_0}(k)-\frac{1}{\sqrt{2\pi}} \hat{\mathcal{G}}(0)\hat{u}_0(k)\big| 
\le &\; \frac{1}{\sqrt{2\pi}} \sum _{|l|\ge k/2}|\hat{\mathcal{G}}(l)||\hat{u}_0(k-l)|\\
\lesssim &\; \langle k \rangle ^{-s-1}\| \mathcal{G}\| _{H^{s+1}}\| u_0\| _{L^2} \lesssim \langle k \rangle ^{-s-1}.
\end{align*}
The condition $\| \lambda \partial ^{-1}(|\epsilon u_0|^2)\| _{L^\infty}\ll 1$ implies that $|\frac{1}{\sqrt{2\pi}} \hat{\mathcal{G}}(0)-1|\ll 1$, and thus $U(0)=\mathcal{G}\epsilon u_0$ satisfies 
\[ |\hat{U}(0,k)| \ge \frac{\epsilon}{2}\langle k \rangle ^{-s_0}\]
for all sufficiently large $k=2^j$.
Therefore, we can argue as in the proof of Theorem~\ref{ip} to conclude that there exists no solution forward in time.
\end{proof}

%%%%%%%%%%%%%%%%%%%%%%%%%%%%%%%%%%%%%%%%%%%%%
%
%     norm inflation
%
%%%%%%%%%%%%%%%%%%%%%%%%%%%%%%%%%%%%%%%%%%%%

\section{Norm inflation}\label{sec:infl}

Our results on norm inflation can be stated as the following two theorems, which imply Theorem~\ref{infl0} in particular.

\begin{theorem}\label{infl}
We assume that \eqref{h1} holds.
Let real numbers $s,s_1$ satisfy
\begin{equation}\label{cond:infl}
1\le s_1\le s<\min \{ \tfrac{5}{3}s_1+\tfrac{1}{2},\,s_1+\tfrac{3}{2} \} .
\end{equation}
Then, for any $\varepsilon ,\tau >0$ there exists a real analytic function $\psi_{\varepsilon ,\tau}$ satisfying $\|\psi_{\varepsilon ,\tau}\|_{H^s}\le \varepsilon$ such that if there exists a solution $u\in C([0,\tau ];H^{s_1}(\mathbf{T}))$ to \eqref{3NLSR} with the initial condition $u(0)=\psi_{\varepsilon,\tau}$, it holds that 
\[ \sup_{t\in [0,\tau ]}\| u(t)\| _{H^{s_1}}\ge \varepsilon ^{-1}.\]
The same is true for the negative time direction.
\end{theorem}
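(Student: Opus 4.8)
The plan is to argue by contradiction, turning the Cauchy--Riemann growth in the reduced equation \eqref{eq:hat-w} into inflation. Since $|\hat w(t,k)|=|\hat u(t,k)|$, it is enough to exhibit a positive mode whose forced growth $e^{akt}$, at rate $a=\frac{\Gamma}{2\pi}\|u_0\|_{L^2}^2>0$, overwhelms the assumed $H^{s_1}$ bound. First I would fix
\[ \psi_{\varepsilon,\tau}(x)=\frac{1}{\sqrt{2\pi}}\bigl( A+\delta\,e^{iNx}\bigr),\qquad A\sim\varepsilon,\quad \delta\sim\varepsilon\langle N\rangle^{-s}, \]
with $N=N(\varepsilon,\tau)\gg1$ chosen last. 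This is a trigonometric polynomial, hence real analytic, and $\|\psi_{\varepsilon,\tau}\|_{H^s}\le\varepsilon$; the constant piece $A$ is there only to keep the conserved $L^2$ norm of size $\varepsilon$, so that $a\gtrsim\varepsilon^2$ is bounded below, while the slowly decaying mode at frequency $N$ seeds the inflation. I then assume, for contradiction, the existence of a solution $u\in C([0,\tau];H^{s_1})$ with $M:=\sup_{t\in[0,\tau]}\|u(t)\|_{H^{s_1}}<\varepsilon^{-1}$.

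Next I would rerun the computation of Section~\ref{sec:il} at the regularity $s_1$ actually available. Solving \eqref{eq:hat-w} forward with the integrating factor $e^{akt}$ and integrating $F_{3,2}$ by parts exactly as in the derivation of \eqref{eq:hat-w2}, one gets for the $N$th mode
\[ \hat w(\tau,N)=e^{aN\tau}\hat u_0(N)+R(N), \]
where $R(N)$ collects $G[w(\tau)](N)$, $e^{aN\tau}G[u_0](N)$ and the time integral of $\bigl(aNG+F_1+F_2+F_{3,1}+H\bigr)[w(t)](N)$. The decisive new input, unavailable in the nonexistence argument, is that the datum is \emph{small}: interpolating the conservation law $\|w(t)\|_{L^2}=\|u_0\|_{L^2}\sim\varepsilon$ of Lemma~\ref{lem:L2law} against the standing bound $\|w(t)\|_{H^{s_1}}\le M$ via $\|w\|_{H^\sigma}\le\|w\|_{L^2}^{1-\sigma/s_1}\|w\|_{H^{s_1}}^{\sigma/s_1}$ renders the intermediate norms $\|w\|_{H^{1/4}},\|w\|_{H^{1/2+}},\|w\|_{H^{3/4}}$ occurring in \eqref{est:F}--\eqref{est:H} quantitatively small in $\varepsilon$. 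Inserting these into Lemma~\ref{lem:multilinear}, and keeping the nonresonant gain \eqref{est:D2} responsible for $G$ and $H$, I would bound $R(N)$ by a negative power of $\langle N\rangle$ times a harmless power of $\varepsilon$.

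The conclusion is then quantitative. For $N$ large depending only on $\varepsilon$ and $\tau$, the linear term dominates $R(N)$ and
\[ \langle N\rangle^{s_1}|\hat u(\tau,N)|=\langle N\rangle^{s_1}|\hat w(\tau,N)|\ \gtrsim\ \langle N\rangle^{s_1}e^{aN\tau}\delta\ \sim\ \varepsilon\,\langle N\rangle^{s_1-s}e^{c\varepsilon^2N\tau}, \]
and since the exponential beats the fixed power $\langle N\rangle^{s_1-s}$, the right-hand side exceeds $\varepsilon^{-1}$ once $N$ is large enough, contradicting $M<\varepsilon^{-1}$. This gives the dichotomy forward in time; the backward assertion follows from the reflection symmetry of Remark~\ref{remsol}~(i), applied to data concentrated on negative modes, as in Theorem~\ref{ip}.

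The hard part is sharpening the bound on $R(N)$ to reach the full range \eqref{cond:infl}. The terms $F_1,F_2$ and the remainders $G,H$ are harmless, but $F_3$ loses a derivative, and the crude estimate only closes for $s<s_1+1$. Pushing to $s<\min\{\tfrac53 s_1+\tfrac12,\,s_1+\tfrac32\}$ forces one to combine the nonresonance relation \eqref{est:D2} with the interpolation smallness above more carefully, tracking how fast the genuinely high-frequency part of $w(t)$ --- the only part that can feed $F_{3,1}$ into mode $N$, by \eqref{est:D1} --- is produced by the flow. Balancing this controlled high-frequency growth against the derivative loss and the forced amplification is exactly what pins down the two thresholds in \eqref{cond:infl}.
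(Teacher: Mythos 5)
Your skeleton --- perturb at a single high frequency, pass to the reduced equation \eqref{eq:hat-w}, integrate the nonresonant part by parts, and derive a contradiction from $E_{s_1}^+(\tau )<\varepsilon ^{-1}$ --- is indeed the paper's skeleton, and your argument as written does close in the restricted range $s<s_1+1$ (there it is essentially the paper's proof of Theorem~\ref{infl+}~(ii), run through \eqref{est:hat-w}). But there is a genuine gap exactly where you defer to ``the hard part'': your proposed mechanism for reaching the full range \eqref{cond:infl} cannot work with your choice of data. With $\| \psi _{\varepsilon ,\tau}\| _{L^2}\sim \varepsilon$ fixed independently of $N$, every available bound on the remainder $R(N)$ carries at best the decay $\langle N\rangle ^{-s_1-1}$ after the $1/(aN)$ gain from the time integral: the high$\times$low$\to$high part of $F_3$ and the momentum part of $F_1$ contribute at this order with constants that do \emph{not} decay in $N$ --- note $E_{3/4}^+(T)^2/\| u_0\| _{L^2}^2\ge 1$ always. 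Hence for $s\ge s_1+1$ the remainder dominates the seed $e^{aN\tau}\varepsilon \langle N\rangle ^{-s}$, your displayed lower bound fails, and no contradiction arises. The obstruction is a polynomial-in-$N$ deficit, which your ``decisive new input'' --- interpolation smallness in $\varepsilon$, a fixed constant once $\varepsilon$ is fixed --- cannot repair; the paper states this dead end explicitly in the discussion preceding its refined estimates.

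The missing idea is to let the $L^2$ norm of the datum \emph{decay with the frequency}: the paper takes \eqref{def:u_0n}, namely $\psi _{\varepsilon ,\tau}=\langle k_0\rangle ^{-\sigma (s)}+\frac{\varepsilon}{\sqrt{4\pi}}\langle k_0\rangle ^{-s}e^{ik_0x}$ with $0<\sigma (s)<\tfrac{1}{2}$, so that the Cauchy--Riemann growth rate $ak_0\sim \langle k_0\rangle ^{1-2\sigma (s)}$ still diverges, while interpolating intermediate norms against $\| u_0\| _{L^2}\sim \langle k_0\rangle ^{-\sigma (s)}$ converts $L^2$ smallness into genuine negative powers of $k_0$ in the remainder. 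This is combined with three further devices absent from your proposal: the momentum $P[u(t)]$ is absorbed into the integrating factor (positivity of the exponent being ensured by \eqref{cond:k}); $F_3$ is redecomposed into $\tilde{F},\tilde{G},\tilde{H}_1,\tilde{H}_2$ using an improved bound on $|\Phi |^{-1}$; and the refined trilinear estimates of Lemma~\ref{lem:multilinear'}, notably \eqref{est:F'} and \eqref{est:H'1}, are proved. Together these yield \eqref{est:hat-w'2}, whose four terms, tested against $\varepsilon \langle k_0\rangle ^{-s}$ via the alternatives \eqref{cond:a1}--\eqref{cond:a4}, are precisely what pins down the two thresholds $\tfrac{5}{3}s_1+\tfrac{1}{2}$ and $s_1+\tfrac{3}{2}$ through the optimization defining $\sigma (s)$ --- not any tracking of how fast high frequencies are produced by the flow. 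Remark~\ref{rem:infl} explains why the $k_0$-dependent $L^2$ norm is forced by the high$\times$low$\to$high interaction $\tilde{H}_1$, which is also why Theorem~\ref{infl}, unlike Theorem~\ref{infl+}, gives inflation only around the zero solution; your constant mode of fixed amplitude $A\sim \varepsilon$ is exactly the choice that remark rules out.
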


\begin{theorem}\label{infl+}
We assume that \eqref{h1} holds.

(i) Let $s\ge 1$.
Then, for any $\varepsilon ,\tau >0$ there exists a real analytic function $\phi_{\varepsilon ,\tau}$ satisfying $\|\phi_{\varepsilon ,\tau}\|_{H^s}\le \varepsilon$ such that the following holds:
Let $T>0$ and $u^*\in C([-T,T];H^1(\mathbf{T}))$ be a solution to \eqref{3NLSR} on $[-T,T]$.
Let $\varepsilon,\tau >0$ be such that $0<\tau \le T$, $\sup_{t\in [-\tau ,\tau ]}\| u^*(t)\|_{H^1}\le \varepsilon ^{-1}$, and such that $2\varepsilon \le \| u^*(0)\|_{L^2}$ if $u^*(0)\neq 0$.
Then, if there exists a solution $u\in C([-\tau,\tau ];H^1(\mathbf{T}))$ to \eqref{3NLSR} with the initial condition $u(0)=u^*(0)+\phi_{\varepsilon,\tau}$, it holds that 
\[ \sup_{t\in [-\tau ,\tau ]}\| u(t)-u^*(t)\| _{H^1}\ge \varepsilon ^{-1}.\]

(ii) Let $1\le s_1\le s<s_1+1$.
Then, for any $\varepsilon ,\tau >0$ there exists a real analytic function $\tilde\phi_{\varepsilon ,\tau}$ satisfying $\|\tilde\phi_{\varepsilon ,\tau}\|_{H^s}\le \varepsilon$ such that the following holds:
Let $T>0$ and $u^*\in C([0,T];H^1(\mathbf{T}))$ be a solution to \eqref{3NLSR} on $[0,T]$.
Let $\varepsilon,\tau >0$ be such that $0<\tau \le T$, $\sup_{t\in [0,\tau ]}\| u^*(t)\|_{H^{s_1}}\le \varepsilon ^{-1}$, and such that $2\varepsilon \le \| u^*(0)\|_{L^2}$ if $u^*(0)\neq 0$.
Then, if there exists a solution $u\in C([0,\tau ];H^{s_1}(\mathbf{T}))$ to \eqref{3NLSR} with the initial condition $u(0)=u^*(0)+\tilde\phi_{\varepsilon,\tau}$, it holds that 
\[ \sup_{t\in [0,\tau ]}\| u(t)-u^*(t)\| _{H^{s_1}}\ge \varepsilon ^{-1}.\]
The same is true for the negative time direction.
\end{theorem}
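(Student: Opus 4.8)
The plan is to perturb the given background $u^*$ by a \emph{fixed, universal} real analytic bump concentrated at one large positive frequency, and to read the estimate \eqref{est:hat-w} (and, for part~(i), the iterated two-sided version behind the proof of Theorem~\ref{ip+}(iii)) as a forced lower bound on the Sobolev norm of any solution issuing from $u^*(0)+(\text{bump})$. Concretely, I would take $\tilde\phi_{\varepsilon,\tau}$ (resp.\ $\phi_{\varepsilon,\tau}$) to be the trigonometric polynomial with $\hat\phi(1)\sim\varepsilon$ and $\hat\phi(N)\sim\varepsilon\langle N\rangle^{-s}$ (all other modes zero), which is real analytic and satisfies $\|\phi\|_{H^s}\le\varepsilon$; here the low mode guarantees $\|u_0\|_{L^2}\gtrsim\varepsilon$, so that $\|u_0\|_{L^2}^{-1}$ stays bounded by a constant depending only on $\varepsilon$. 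This is where the hypothesis $2\varepsilon\le\|u^*(0)\|_{L^2}$ enters in the case $u^*(0)\neq0$, while $L^2$-conservation (Lemma~\ref{lem:L2law}) forces $u^*\equiv0$ when $u^*(0)=0$, so the background then contributes nothing. The large frequency $N=N(\varepsilon,\tau)$ is chosen at the very end.

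For part~(ii) I argue by contradiction. Suppose a solution $u\in C([0,\tau];H^{s_1})$ with $u(0)=u^*(0)+\tilde\phi_{\varepsilon,\tau}$ exists and $\sup_{[0,\tau]}\|u-u^*\|_{H^{s_1}}<\varepsilon^{-1}$. Combined with $\sup_{[0,\tau]}\|u^*\|_{H^{s_1}}\le\varepsilon^{-1}$ and $s_1\ge1$, this bounds $E^+_{s_1}(\tau)$ and $E^+_1(\tau)$ for $u$ by $2\varepsilon^{-1}$, so applying \eqref{est:hat-w} at regularity $s_1$ to $u$ gives, for every $N>0$,
\[ \langle N\rangle ^{s_1+1}\bigl|\hat{u}_0(N)\bigr| \lesssim R^{s_1}\big\langle \|u_0\|_{L^2}^{-2}\big\rangle \big\langle \tau ^{-1}\big\rangle \big\langle E_1^+(\tau )\big\rangle ^4 E_{s_1}^+(\tau )=:C(\varepsilon ,\tau ). \]
On the other hand, applying the same estimate \eqref{est:hat-w} to the admissible background $u^*$ itself yields $|\hat{u}^*(0,N)|\lesssim \langle N\rangle^{-(s_1+1)}C(\varepsilon,\tau)$, and since $s<s_1+1$ this is $\ll\langle N\rangle^{-s}$ for large $N$; hence $|\hat u_0(N)|\ge\tfrac{\varepsilon}{4}\langle N\rangle^{-s}$, so the left-hand side is $\gtrsim\varepsilon\langle N\rangle^{s_1+1-s}$. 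As $s_1+1-s>0$, choosing $N=N(\varepsilon,\tau)$ large enough contradicts the upper bound $C(\varepsilon,\tau)$. Thus either no such solution exists or $\sup_{[0,\tau]}\|u-u^*\|_{H^{s_1}}\ge\varepsilon^{-1}$; the negative-time direction is identical with $P_-$ in place of $P_+$.

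For part~(i), where $s\ge1$ is unrestricted and inflation is measured in $H^1$, the single forward estimate at $s_1=1$ only handles $s<2$, so I would instead use the two-sided interior smoothing. If $u\in C([-\tau,\tau];H^1)$ solves \eqref{3NLSR} with $u(0)=u^*(0)+\phi_{\varepsilon,\tau}$, then Theorem~\ref{ip+}(iii) (applied on $[-\tau,\tau]$) shows $u_0\in H^\infty$, and iterating \eqref{1} exactly as in its proof gives, for each fixed $n\in\mathbf{N}$,
\[ \|u_0\|_{H^{1+n(\frac12-)}}\lesssim K_n\big\langle E_1(\tau )\big\rangle ^{4n+1},\qquad K_n=K_n(\tau ,\|u_0\|_{L^2}^{-1}). \]
Since $u^*$ is two-sided it is also in $H^\infty$ with norms controlled, via Theorem~\ref{ip+}(iii), uniformly over admissible backgrounds (using $\sup\|u^*\|_{H^1}\le\varepsilon^{-1}$ and $\|u^*(0)\|_{L^2}\ge2\varepsilon$), so $|\hat{u}^*(0,N)|$ is negligible and $|\hat u_0(N)|\ge\tfrac{\varepsilon}{4}\langle N\rangle^{-s}$. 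Fixing $n$ with $n(\tfrac12-)>s-1$, the left-hand side is $\ge\tfrac{\varepsilon}{4}\langle N\rangle^{1+n(\frac12-)-s}$, and inverting the polynomial bound gives $E_1(\tau)\gtrsim\big(\langle N\rangle^{1+n(\frac12-)-s}/K_n\big)^{1/(4n+1)}$. Choosing $N$ large forces $E_1(\tau)\ge2\varepsilon^{-1}$, whence $\sup\|u-u^*\|_{H^1}\ge E_1(\tau)-\sup\|u^*\|_{H^1}\ge\varepsilon^{-1}$.

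The main obstacle I anticipate is the uniform constant-tracking, particularly in part~(i): one must verify that iterating \eqref{1} produces a bound that is genuinely polynomial in $E_1(\tau)$ with a constant $K_n$ independent of $N$ and uniform over all admissible $u^*$, so that the inversion truly drives $E_1(\tau)\to\infty$; this requires controlling the $s$-dependence through $R^s$ and the $\|u_0\|_{L^2}^{-1}$-dependence simultaneously. A closely related and essential point in both parts is the domination of the background's $N$-th Fourier coefficient by the perturbation, which rests on the a priori decay of the positive modes of $u^*$ (itself an output of \eqref{est:hat-w} applied to $u^*$) together with the hypothesis $s<s_1+1$ in part~(ii), resp.\ the $H^\infty$-regularity of two-sided solutions in part~(i).
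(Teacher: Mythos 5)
Your proposal is correct and matches the paper's own proof in all essentials: the same fixed two-mode analytic perturbation (a low mode ensuring $\| u_0\| _{L^2}\gtrsim \varepsilon$ uniformly in the high frequency, plus a high mode of size $\varepsilon \langle N\rangle ^{-s}$), the pointwise estimate \eqref{est:hat-w} at the high frequency under $s<s_1+1$ for part (ii), the iterated two-sided estimate \eqref{1} (the paper's \eqref{11}) driving $E_1(\tau )$ above $2\varepsilon ^{-1}$ for part (i), and the hypothesis $2\varepsilon \le \| u^*(0)\| _{L^2}$ entering exactly where you place it, to control $\| u_0\| _{L^2}^{-1}$ and the background's own a priori bound. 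The only deviations --- using mode $1$ instead of the paper's constant mode, the contradiction framing, and comparing Fourier coefficients of $u^*(0)$ at the single frequency $N$ (with a fixed $n$ satisfying $1+n(\frac{1}{2}-)>s$) rather than the paper's triangle inequality at the level of $H^{s+1}$ norms in part (i) --- are cosmetic.
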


\begin{remark}
(a) For $s,s_1$ satisfying $1\le s_1\le s<s_1+1$, Theorem~\ref{infl+} (ii) is stronger than Theorem~\ref{infl+} (i) in that we do not need to assume the existence of solution to both sides from $t=0$, and also stronger than Theorem~\ref{infl} in that norm inflation occurs not only around $u^*\equiv 0$ but also around any solution $u^*$.

(b) We do not try to optimize the condition \eqref{cond:infl} in Theorem~\ref{infl}; our goal here is to include the case $s_1 = s-1$, which seems to be natural since the nonlinearity of \eqref{3NLSR} includes the first derivative of the unknown function.
However, in contrast to Theorem~\ref{infl+}, the estimate \eqref{est:hat-w'2} for each solution derived below cannot yield norm inflation at non-zero initial data.
See Remark~\ref{rem:infl} below.

(c) The analytic perturbations $\phi _{\varepsilon ,\tau},\tilde\phi _{\varepsilon ,\tau}$ which we take in the proof of Theorem~\ref{infl+} is depending only on $s,s_1,\varepsilon, \tau$ and independent of $u^*$.
\end{remark}

Let us begin with showing Theorem~\ref{infl+}, which is a direct consequence of the argument in the preceding section.

\begin{proof}[Proof of Theorem~\ref{infl+}]
(i) We can show the following estimate by iterating \eqref{1}:
For any $s'\!\ge 1$ there exist constants $C,C'\!>0$ such that for any solution $u\in C([-T,T];H^1)$ to the Cauchy problem \eqref{3NLSR}--\eqref{ic} on $[-T,T]$, it holds that
\begin{equation}\label{11}
\| u_0\| _{H^{s'}}\le C\Big[ \big\langle \|u_0\|_{L^2}^{-2}\big\rangle \big\langle T^{-1}\big\rangle \big\langle E_1(T)\big\rangle ^4\Big] ^{C'}E_1(T).
\end{equation}
(Hereafter, we do not care about the $s'$-dependence of the constants.)

We set
\[ \phi_{\varepsilon ,\tau}(x):=\frac{\varepsilon}{\sqrt{4\pi}}\big( 1+\langle k_0\rangle ^{-s}e^{ik_0x}\big) ,\]
where $k_0\in \mathbf{Z}$ is some large positive frequency to be chosen later.
Note that $\| \phi_{\varepsilon ,\tau}\| _{H^s}=\varepsilon$ and $\| \phi_{\varepsilon ,\tau}\| _{L^2}\sim \varepsilon$ are small, while $\| \phi_{\varepsilon ,\tau}\| _{H^{s+1}}\sim \varepsilon \langle k_0\rangle$ can be large.

We first consider the case $u^*(0)=0$ (we do not assume $u^*\equiv 0$).
Let $0<\tau \le T$, $\varepsilon ^{-1}\ge E_1^*(\tau ):=\sup_{t\in [-\tau ,\tau ]}\| u^*(t)\|_{H^1}$, and assume that a solution $u\in C([-\tau,\tau ];H^1)$ to \eqref{3NLSR} with $u(0)=\phi_{\varepsilon,\tau}$ exists.
Then, from \eqref{11} with $s'=s+1$ we have
\[ \varepsilon \langle k_0\rangle \le C\| u(0)\| _{H^{s+1}}\le C\Big[ \big\langle \varepsilon ^{-2}\big\rangle \big\langle \tau ^{-1}\big\rangle \big\langle E_1(\tau )\big\rangle ^4\Big] ^{C'}E_1(\tau ).\]
If we take $k_0$ as
\[ \varepsilon \langle k_0\rangle \ge C\Big[ \big\langle \varepsilon ^{-2}\big\rangle \big\langle \tau ^{-1}\big\rangle \big\langle 2\varepsilon ^{-1}\big\rangle ^4\Big] ^{C'}2\varepsilon ^{-1},\]
then it must hold that $E_1(\tau )\ge 2\varepsilon ^{-1}$.
Since $E_1^*(\tau )\le \varepsilon ^{-1}$, we have
\[ \sup _{t\in [-\tau ,\tau ]}\| u(t)-u^*(t)\| _{H^1}\ge E_1(\tau )-E_1^*(\tau )\ge \varepsilon ^{-1},\]
and the claim follows.

For $u^*(0)\neq 0$, by the assumption on $\varepsilon$ we have $\| u^*(0)+\phi_{\varepsilon ,\tau}\|_{L^2}\ge \varepsilon$.
We estimate $\phi_{\varepsilon,\tau}=(u^*(0)+\phi _{\varepsilon,\tau})-u^*(0)$ by \eqref{eq:hat-w2} of $u$ and $u^*$ to obtain
\begin{align*}
\| \phi _{\varepsilon ,\tau}\| _{H^{s+1}}&\lesssim \big\langle \| u^*(0)+\phi_{\varepsilon ,\tau}\|_{L^2}^{-2}\big\rangle \big\langle \tau ^{-1}\big\rangle \big\langle E_1(\tau )\big\rangle ^4E_{s+\frac{1}{2}+}(\tau )\\
&\quad +\big\langle \| u^*(0)\|_{L^2}^{-2}\big\rangle \big\langle \tau ^{-1}\big\rangle \big\langle E_1^*(\tau )\big\rangle ^4E_{s+\frac{1}{2}+}^*(\tau ).
\end{align*}
In the same manner as above, this inequality and \eqref{11} yield
\begin{align*}
\varepsilon \langle k_0\rangle &\le C\Big[ \big\langle \| u^*(0)+\phi_{\varepsilon ,\tau}\|_{L^2}^{-2}\big\rangle \big\langle \tau ^{-1}\big\rangle \big\langle E_1(\tau )\big\rangle ^4\Big] ^{C'}E_1(\tau )\\
&\quad +C\Big[ \big\langle \| u^*(0)\|_{L^2}^{-2}\big\rangle \big\langle \tau ^{-1}\big\rangle \big\langle E_1^*(\tau )\big\rangle ^4\Big] ^{C'}E_1^*(\tau )\\
&\le C\Big[ \big\langle \varepsilon ^{-2}\big\rangle \big\langle \tau ^{-1}\big\rangle \big\langle E_1(\tau )\big\rangle ^4\Big] ^{C'}E_1(\tau )+C\Big[ \big\langle (2\varepsilon )^{-2}\big\rangle \big\langle \tau ^{-1}\big\rangle \big\langle \varepsilon^{-1}\big\rangle ^4\Big] ^{C'}\varepsilon ^{-1}.
\end{align*}
Therefore, if we retake $k_0$ as 
\[ \varepsilon \langle k_0\rangle \ge C\Big[ \big\langle \varepsilon ^{-2}\big\rangle \big\langle \tau ^{-1}\big\rangle \big\langle 2\varepsilon ^{-1}\big\rangle ^4\Big] ^{C'}2\varepsilon ^{-1}+C\Big[ \big\langle (2\varepsilon )^{-2}\big\rangle \big\langle \tau ^{-1}\big\rangle \big\langle \varepsilon^{-1}\big\rangle ^4\Big] ^{C'}\varepsilon ^{-1},\]
it must hold that $E_1(\tau )\ge 2\varepsilon ^{-1}$.
The claim follows as before.

(ii) 
As $\tilde\phi_{\varepsilon,\tau}$, we use the same function $\phi_{\varepsilon,\tau}$ as above, but with a different $k_0$ depending on $s$ and $s_1$.
Note that $\| \tilde\phi_{\varepsilon ,\tau}\| _{H^s}=\varepsilon$, $\| \tilde\phi_{\varepsilon ,\tau}\| _{L^2}\sim \varepsilon$, and $\langle k_0\rangle ^{s_1+1}|\hat{\tilde\phi}_{\varepsilon ,\tau}(k_0)|\sim \varepsilon \langle k_0\rangle ^{s_1+1-s}$ can be large if $s<s_1+1$.
Then, the claim is shown in the same way as (i) using the estimate \eqref{est:hat-w} with $k=k_0$ and $s$ replaced by $s_1$, instead of \eqref{11}.
\end{proof}

We turn to the proof of Theorem~\ref{infl}, concentrating on the case of positive time direction as before.
One can actually show a slightly stronger estimate than \eqref{est:hat-w}: 
\begin{equation*}
\begin{split}
&\langle k\rangle ^{s_1+1}|\hat{u}_0(k)|\\
&\lesssim \Big( e^{-\frac{\Gamma}{2\pi}\| u_0\| _{L^2}^2kT}\langle k\rangle +E_{\frac{1}{4}}^+(T)^2+\frac{E_{\frac{3}{4}}^+(T)^2+E_{\frac{1}{4}}^+(T)^2E_{\frac{1}{2}+}^+(T)^2}{\| u_0\|_{L^2}^2}\Big) E_{s_1}^+(T)
\end{split}
\end{equation*}
for a solution $u\in C([0,T];H^{s_1})$ and any positive $k$, but it seems still useless for $s\ge s_1+1$.
In fact, the left-hand side of the above estimate is bounded with respect to $k>0$ and $u_0$ in a bounded set of $H^s$ when $s\ge s_1+1$.
Then, since $E_{3/4}^+(T)^2/\| u_0\| _{L^2}^2\ge 1$, this estimate is not likely to give some diverging lower bounds on $E_{s_1}^+(T)$ for a bounded sequence of initial data in $H^s$.
Hence, we need some refined estimates for proving Theorem~\ref{infl} when $s\ge s_1+1$.

Now, assume that $u\in C([0,T];H^{s_1}(\mathbf{T}))$ is a solution to \eqref{3NLSR}--\eqref{ic}, $s_1\ge 1$, $T>0$, and rewrite the equations \eqref{eq:hat-w}, \eqref{eq:hat-w2} for $\hat{w}(t,k)$ as follows:
\begin{align}
\partial _t\hat{w}(k)
=&\; \frac{\Gamma}{2\pi}\| u_0\| _{L^2}^2k\hat{w}(k) -\frac{\Gamma}{2\pi}\bigl ( \sum _{k'\in \mathbf{Z}}k'|\hat{w}(k')|^2\bigr ) \hat{w}(k)-\frac{i\gamma_1+i\gamma_2k}{2\pi}|\hat{w}(k)|^2\hat{w}(k) \notag \\
& +\sum _{\begin{smallmatrix} k_1+k_2+k_3=k\\ (k_1+k_2)(k_2+k_3)\neq 0\end{smallmatrix}}\frac{i\gamma_1+i\gamma_2 k+\Gamma (k_1+k_2)}{2\pi}e^{it\Phi}\hat{w}(k_1)\bar{\hat{w}}(-k_2)\hat{w}(k_3) \notag \\
=&\; \frac{\Gamma}{2\pi}\Bigl ( \| u_0\| _{L^2}^2k-P[u(t)]\Bigr ) \hat{w}(k) \notag \\
& +\Bigl [ \sum _{D_1(k)}\frac{i\gamma_1+i\gamma_2 k+\Gamma (k_1+k_2)}{2\pi}e^{it\Phi}\hat{w}(k_1)\bar{\hat{w}}(-k_2)\hat{w}(k_3) \notag \\
&\qquad -\frac{i\gamma_1+i\gamma_2k}{2\pi}|\hat{w}(k)|^2\hat{w}(k) \Bigr ] \notag \\
& +\partial _t \sum _{D_2(k)}\frac{i\gamma_1+i\gamma_2 k+\Gamma (k_1+k_2)}{2\pi i\Phi}e^{it\Phi}\hat{w}(k_1)\bar{\hat{w}}(-k_2)\hat{w}(k_3) \notag \\
& -\sum _{D_2(k)}\frac{i\gamma_1+i\gamma_2 k+\Gamma (k_1+k_2)}{2\pi i\Phi}e^{it\Phi}\partial_t \bigl [ \hat{w}(k_1)\bar{\hat{w}}(-k_2)\hat{w}(k_3)\bigr ] \notag \\
=:&\; \frac{\Gamma}{2\pi}\Bigl ( \| u_0\| _{L^2}^2k-P[u(t)]\Bigr ) \hat{w}(k) \notag \\
&+\tilde{F}[w(t)](k)+\partial _t\tilde{G}[w(t)](k)+\tilde{H}[w(t)](k), \notag
\end{align}
where
\[ P[u(t)]:=\Im \int _{\mathbf{T}}\bar{u}(t,x)\partial_x u(t,x)\,dx=\sum _{k\in \mathbf{Z}}k|\hat{u}(t,k)|^2=\sum _{k\in \mathbf{Z}}k|\hat{w}(t,k)|^2\in \mathbf{R}.\]
The quantity $P[u]$ is called the momentum.
Note that 
\[ |P[u(t)]|\le E_{\frac{1}{2}}^+(T)^2:=\sup _{t\in [0,T]}\| u(t)\|_{H^{\frac{1}{2}}}^2,\qquad t\in [0,T].\]
We consider one more decomposition; for $j=1,2,3$, let 
\[ D_{2,j}(k):=\bigl \{ (k_1,k_2,k_3)\in D_2(k)\,\big| \,|k_j|>4\max _{1\le l\le 3,\, l\neq j}|k_l|\bigr \} ,\]
and define 
\begin{align*}
\tilde{H}_1[w(t)](k):=&\; -\sum _{D_{2,1}(k)}\frac{i\gamma_1+i\gamma_2 k+\Gamma (k_1+k_2)}{2\pi i\Phi}e^{it\Phi}(\partial_t \hat{w})(k_1)\bar{\hat{w}}(-k_2)\hat{w}(k_3) \\
&\; -\sum _{D_{2,2}(k)}\frac{i\gamma_1+i\gamma_2 k+\Gamma (k_1+k_2)}{2\pi i\Phi}e^{it\Phi}\hat{w}(k_1)(\partial_t \bar{\hat{w}})(-k_2)\hat{w}(k_3) \\
&\; -\sum _{D_{2,3}(k)}\frac{i\gamma_1+i\gamma_2 k+\Gamma (k_1+k_2)}{2\pi i\Phi}e^{it\Phi}\hat{w}(k_1)\bar{\hat{w}}(-k_2)(\partial_t \hat{w})(k_3) ,\\
\tilde{H}_2[w(t)](k):=&\; \tilde{H}[w(t)](k)-\tilde{H}_1[w(t)](k). 
\end{align*}
Hence, we obtain 
\[ \partial _t\hat{w}(k)=\frac{\Gamma}{2\pi}\Bigl ( \| u_0\| _{L^2}^2k-P[u(t)]\Bigr ) \hat{w}(k) +\partial _t\tilde{G}[w(t)](k)+\big( \tilde{F}+\tilde{H}_1+\tilde{H}_2\big) [w(t)](k)
\]
for $t\in [0,T]$, which combined with \eqref{ic:hat-w} implies
\begin{equation}\label{eq:hat-w3}
\begin{split}
\hat{u}_0(k)
&=e^{-\frac{\Gamma}{2\pi}\int _0^T( \| u_0\| _{L^2}^2k-P[u(t)]) \,dt}\hat{w}(T,k)\\
&\quad -\int _0^{T}e^{-\frac{\Gamma}{2\pi}\int _0^t(\| u_0\|_{L^2}^2k-P[u(t')])\,dt'}\partial _t\tilde{G}[w(t)](k)\,dt\\
&\quad -\int _0^Te^{-\frac{\Gamma}{2\pi}\int _0^t( \| u_0\| _{L^2}^2k-P[u(t')]) \,dt'}\big( \tilde{F}+\tilde{H}_1+\tilde{H}_2\big) [w(t)](k)\,dt.
\end{split}
\end{equation}

Now, we assume that the solution $u$ and a positive integer $k$ satisfy
\begin{equation}\label{cond:k}
u_0\neq 0,\qquad 2E_{\frac{1}{2}}^+(T)^2\le \| u_0\| _{L^2}^2k.
\end{equation}
Then, we have
\[ \frac{\Gamma}{2\pi}\big( \| u_0\| _{L^2}^2k-P[u(t)])\ge \frac{\Gamma}{4\pi}\| u_0\|_{L^2}^2k>0, \qquad t\in [0,T].\]
We estimate the integral in the right-hand side of \eqref{eq:hat-w3} similarly to the estimates in \eqref{est:hat-w}.
First, by an integration by parts, we have
\begin{align*}
&\Big| \int _0^{T}e^{-\frac{\Gamma}{2\pi}\int _0^t(\| u_0\|_{L^2}^2k-P[u(t')])\,dt'}\partial _t\tilde{G}[w(t)](k)\,dt\Big| \\
&\le \Big| e^{-\frac{\Gamma}{2\pi}\int _0^T( \| u_0\| _{L^2}^2k-P[u(t)]) \,dt}\tilde{G}[w(T)](k)-\tilde{G}[u_0](k)\Big| \\
&\quad +\Big| \int _0^T\tfrac{\Gamma}{2\pi}\big( \| u_0\| _{L^2}^2k-P[u(t)]\big) e^{-\frac{\Gamma}{2\pi}\int _0^t( \| u_0\| _{L^2}^2k-P[u(t')]) \,dt'}\tilde{G} [w(t)](k)\,dt\Big| \\
&\le 4\sup _{t\in [0,T]}|\tilde{G}[w(t)](k)|.
\end{align*}
Secondly, we have
\begin{align*}
&\Big| \int _0^Te^{-\frac{\Gamma}{2\pi}\int _0^t( \| u_0\| _{L^2}^2k-P[u(t')]) \,dt'}\big( \tilde{F}+\tilde{H}_1+\tilde{H}_2\big) [w(t)](k)\,dt\Big| \\
&\le \sup _{t\in [0,T]}\big| \big( \tilde{F}+\tilde{H}_1+\tilde{H}_2\big) [w(t)](k)\big| \int _0^Te^{-\frac{\Gamma}{4\pi}\| u_0\|_{L^2}^2kt}\,dt \\
&\lesssim \frac{1}{\| u_0\|_{L^2}^2\langle k\rangle}\sup _{t\in [0,T]}\big| \big( \tilde{F}+\tilde{H}_1+\tilde{H}_2\big) [w(t)](k)\big| .
\end{align*}
Combining these estimates with \eqref{eq:hat-w3}, we obtain
\begin{equation}\label{est:hat-w'}
\begin{split}
|\hat{u}_0(k)|\lesssim \Big[ &\;e^{-\frac{\Gamma}{4\pi}\| u_0\| _{L^2}^2kT}|\hat{w}(T,k)|+\sup _{t\in [0,T]}|\tilde{G}[w(t)](k)|\\
&+\frac{1}{\| u_0\|_{L^2}^2\langle k\rangle}\sup _{t\in [0,T]}\big| \big( \tilde{F}+\tilde{H}_1+\tilde{H}_2\big) [w(t)](k)\big| \Big] 
\end{split}
\end{equation}
for any $k$ satisfying \eqref{cond:k}.

The next lemma is a refinement of Lemma~\ref{lem:multilinear}.
(Note that we do not clarify here the $s$-dependence of the constants.)
\begin{lemma}\label{lem:multilinear'}
Let $s\ge 1$.
We have
\begin{gather}
\sup _{k\in \mathbf{Z}} \; \langle k\rangle ^{3s-\frac{3}{2}}\bigl | \tilde{F}[f](k)\bigr |\lesssim \| f\| _{H^s}^3,\label{est:F'}\\
\sup _{k\in \mathbf{Z}} \; \langle k\rangle ^{s+1}\bigl | \tilde{G}[f](k)\bigr |\lesssim \| f\| _{H^{\frac{1}{4}}}^2\| f\| _{H^s}.\label{est:G'}
\end{gather}
Moreover, if $\hat{w}(\cdot ,k)\in C^1([0,T])$ is a solution to \eqref{eq:hat-w} and $w(t)\in H^s(\mathbf{T})$ for $t\in [0,T]$, then 
\begin{gather}
\sup _{k\in \mathbf{Z}}\; \langle k\rangle ^s\bigl | \tilde{H}_1[w(t)](k)\bigr | \lesssim \| w(t)\| _{H^{0+}}^2\| w(t)\| _{H^{\frac{1}{4}}}^2\| w(t)\| _{H^s},\label{est:H'1} \\
\sup _{k\in \mathbf{Z}}\; \langle k\rangle ^{s+1}\bigl | \tilde{H}_2[w(t)](k)\bigr | \lesssim \| w(t)\| _{H^{\frac{1}{4}}}^2\| w(t)\| _{H^{\frac{1}{2}+}}\| w(t)\| _{H^1}\| w(t)\| _{H^s}.\label{est:H'2}
\end{gather}
\end{lemma}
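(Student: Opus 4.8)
The plan is to spend the available weights against the frequency localizations defining $D_1$, $D_2$, $D_{2,j}$ and the nonresonance relation \eqref{est:Phi}, invoking \eqref{est:w_t} whenever a factor $\partial_t\hat{w}$ appears; throughout I set $M:=\max_{1\le j\le 3}\langle k_j\rangle$ and use $\langle k\rangle\lesssim M$ and $|i\gamma_1+i\gamma_2 k+\Gamma(k_1+k_2)|\lesssim M$ on $D(k)$. For \eqref{est:F'}, on $D_1(k)$ the three frequencies are comparable with $\langle k\rangle\lesssim\langle k_1\rangle\sim\langle k_2\rangle\sim\langle k_3\rangle$ by \eqref{est:D1}, so weight and multiplier combine to $\langle k\rangle^{3s-\frac32}|i\gamma_1+i\gamma_2 k+\Gamma(k_1+k_2)|\lesssim M^{3s-\frac12}\sim\langle k_1\rangle^{s-\frac16}\langle k_2\rangle^{s-\frac16}\langle k_3\rangle^{s-\frac16}$, distributing the power symmetrically. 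The remaining sum over $D_1(k)$ is dominated by the $k$-th Fourier coefficient of $\bigl(\mathcal{F}^{-1}[\langle\cdot\rangle^{s-\frac16}|\hat{f}|]\bigr)^3$, so estimating $\ell^\infty_k$ of a Fourier transform by an $L^1_x$ norm, using Hölder in $L^3$ and the embedding $H^{\frac16}(\mathbf{T})\hookrightarrow L^3(\mathbf{T})$, gives
\[ \sup_k\langle k\rangle^{3s-\frac32}\Bigl|\sum_{D_1(k)}(\cdots)\Bigr|\lesssim\bigl\|\mathcal{F}^{-1}[\langle\cdot\rangle^{s-\frac16}|\hat{f}|]\bigr\|_{L^3}^3\lesssim\|f\|_{H^s}^3, \]
while the subtracted diagonal term is immediate from $\langle k\rangle^{3s-\frac32}\langle k\rangle|\hat{f}(k)|^3=\langle k\rangle^{-\frac12}\bigl(\langle k\rangle^{s}|\hat{f}(k)|\bigr)^3\le\|f\|_{H^s}^3$.

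The estimate \eqref{est:G'} follows exactly as the estimates on $F_2$ and $G$ in Lemma~\ref{lem:multilinear}. By \eqref{est:D2} we have $|\Phi|\gtrsim M^2$ on $D_2(k)$, hence $\langle k\rangle^{s+1}\bigl|\tfrac{i\gamma_1+i\gamma_2 k+\Gamma(k_1+k_2)}{2\pi i\Phi}\bigr|\lesssim M^{s}\lesssim\langle k_1\rangle^s+\langle k_2\rangle^s+\langle k_3\rangle^s$. Placing the surviving $\langle\cdot\rangle^s$ on the corresponding factor and estimating the $\ell^\infty_k$ Fourier coefficient by the $L^1_x$ norm of a product, Hölder $L^4\cdot L^4\cdot L^2$ together with $\|\mathcal{F}^{-1}[|\hat{f}|]\|_{L^4}\lesssim\|f\|_{H^{1/4}}$ yields \eqref{est:G'}.

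The estimates \eqref{est:H'1}--\eqref{est:H'2} carry the real difficulty, and I expect \eqref{est:H'1} to be the main obstacle. In each term I would first replace the differentiated factor using \eqref{est:w_t}, namely $|\partial_t\hat{w}(k_j)|\lesssim\langle k_j\rangle^{1-s'}\|w\|_{H^{1/4}}^2\|w\|_{H^{s'}}$ with $s'$ chosen below, which already produces the two factors $\|w\|_{H^{1/4}}^2$. The crucial device is the sharp factorized form $|\Phi|\sim\langle k_1+k_2\rangle\langle k_2+k_3\rangle\langle k_3+k_1\rangle$ from \eqref{est:Phi}: two of these brackets are $\sim M$ and are spent to cancel the top weight and the derivative loss coming from \eqref{est:w_t}, while the smallest bracket $\langle k_i+k_j\rangle^{-1}$ is kept as an off-diagonal gain. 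For \eqref{est:H'1} the derivative falls on the dominant frequency on each $D_{2,j}$; taking $s'=s$ so that the weight $\langle k_{\max}\rangle^{s-1}$ cancels $\langle k_{\max}\rangle^{1-s}$ from \eqref{est:w_t}, on $D_{2,1}$ one is reduced to $\sum_{k_2,k_3}\langle k_2+k_3\rangle^{-1}|\hat{w}(-k_2)|\,|\hat{w}(k_3)|$. The delicate point is that this weight together with an arbitrarily small power of regularity on the two low factors makes the double sum converge: writing $|\hat{w}(k_j)|=\langle k_j\rangle^{-(0+)}\cdot\langle k_j\rangle^{0+}|\hat{w}(k_j)|$ and applying a Cauchy–Schwarz (Schur-type) bound controls it by $\|w\|_{H^{0+}}^2$, which is precisely why $H^{0+}$, rather than $H^{1/2+}$, suffices. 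The three regions $D_{2,j}$ are symmetric.

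Finally, for \eqref{est:H'2}, which gathers the remaining terms of $\tilde{H}$ (the derivative on a non-dominant factor on each $D_{2,j}$, together with all terms on the balanced region $D_2\setminus\bigcup_j D_{2,j}$), I would apply \eqref{est:w_t} with $s'=1$ to the differentiated factor (yielding $\|w\|_{H^1}$), place $\langle\cdot\rangle^s$ on the dominant factor (yielding $\|w\|_{H^s}$), and bound a remaining low factor in $\ell^1$ by $\|\hat{w}\|_{\ell^1}\lesssim\|w\|_{H^{1/2+}}$. Here the off-diagonal bracket $\langle k_i+k_j\rangle^{-1}$ localizes the dominant frequency near $k$, so that a single Cauchy–Schwarz in that variable recovers $\|w\|_{H^s}$ against an absolutely convergent weight $\sum_m\langle m\rangle^{-2}<\infty$, giving $\|w\|_{H^{1/4}}^2\|w\|_{H^{1/2+}}\|w\|_{H^1}\|w\|_{H^s}$. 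The same peeling of the smallest $\Phi$-bracket handles the balanced region uniformly, since whenever two frequencies are comparable and nearly cancel their sum is localized by that bracket. The heart of the argument, and the step I expect to demand the most care, is exactly this allocation of the single derivative gained from $\Phi$: it must simultaneously absorb the derivative loss in \eqref{est:w_t} and still leave enough off-diagonal decay to sum the low-frequency factors at the minimal regularity asserted.
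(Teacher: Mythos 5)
Your arguments for \eqref{est:F'}, \eqref{est:G'} and \eqref{est:H'1} coincide with the paper's proof: the same distribution of the weight as $\langle k_1\rangle^{s-\frac16}\langle k_2\rangle^{s-\frac16}\langle k_3\rangle^{s-\frac16}$ with $L^3$ H\"older and $H^{1/6}\hookrightarrow L^3$ for $\tilde F$, the same reduction of $\tilde G$ to the $F_2$-type $L^4$ estimate, and for $\tilde H_1$ the same improved lower bound $|\Phi|^{-1}\lesssim M^{-2}\bigl(\langle k_1+k_2\rangle^{-1}+\langle k_1+k_3\rangle^{-1}+\langle k_2+k_3\rangle^{-1}\bigr)$ on $D_2(k)$, the cancellation $\langle k\rangle^s\cdot M\cdot M^{-2}\cdot M^{1-s}\lesssim 1$ via \eqref{est:w_t} with $s'=s$ on the dominant differentiated factor, and the Schur/Cauchy--Schwarz bound with $0+$ weights (the paper's estimate \eqref{est:H'a}) giving $\|w\|_{H^{0+}}^2$.

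For \eqref{est:H'2}, however, your uniform recipe has a genuine gap. Take the representative term the paper isolates: $\partial_t$ on $\hat w(k_3)$, $|k_1|\ge|k_2|$ (so $\langle k_1\rangle\sim M$), and consider the configuration in which the small bracket is $\langle k_1+k_3\rangle$, i.e., $k_1\approx -k_3$ are both large while $k_2$ and the output $k$ are small (high$\times$high$\to$low). Since $k_1+k_3=k-k_2$, once you fix $k_2$ by your $\ell^1$ summation of the low factor, the bracket $\langle k_1+k_3\rangle^{-1}=\langle k-k_2\rangle^{-1}$ is \emph{constant in $k_1$}: contrary to your claim, it localizes the output near the low frequency, not the dominant frequency near $k$. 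Hence no off-diagonal decay in the dominant variable survives, your advertised ``single Cauchy--Schwarz against $\sum_m\langle m\rangle^{-2}$'' cannot be performed, and what remains is $\langle k-k_2\rangle^{-1}\sum_{k_1}\langle k_1\rangle^{s}|\hat w(k_1)|$, which is controlled only by $\|w\|_{H^{s+\frac12+}}$, not by $\|w\|_{H^s}$. The paper closes exactly this case with a device absent from your sketch: it writes $\langle k\rangle^{s+1}\langle k_1\rangle/|\Phi|\lesssim\langle k\rangle^{\frac12+}\langle k_1\rangle^{s-\frac12-}/\langle k_1+k_3\rangle$ and then splits $\langle k\rangle^{\frac12+}\le\langle k_2\rangle^{\frac12+}+\langle k_1+k_3\rangle^{\frac12+}$, so the half-derivative is either transferred onto the low factor --- which must then be kept in \emph{weighted $\ell^2$} (yielding $\|w\|_{H^{\frac12+}}$), not dumped into $\ell^1$ --- or spent cancelling part of the bracket; the dominant factor is summed with the subcritical weight $\langle k_1\rangle^{s-\frac12-}$, which is $\ell^1$-summable against $\|w\|_{H^s}$ by Cauchy--Schwarz with no bracket help at all. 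Your other two configurations (small bracket $\langle k_1+k_2\rangle$, or $\langle k_2+k_3\rangle=\langle k-k_1\rangle$) do go through as you describe; it is precisely the pairing of the $H^s$-weighted factor with the differentiated factor that breaks your scheme, and it is there that the factors $\|w\|_{H^{\frac12+}}\|w\|_{H^1}$ in \eqref{est:H'2} are genuinely earned.
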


\begin{proof}
For \eqref{est:F'}, we observe that all the frequencies of three functions in each term of $F'$ are of the same size.
By the H\"older inequality and Sobolev embedding, we have
\begin{equation*}
\langle k\rangle ^{3s-\frac{3}{2}}\bigl | \tilde{F}[f](k)\bigr |\lesssim \| \mathcal{F}^{-1}[\langle k\rangle ^{s-\frac{1}{6}}|\hat{f}|]\| _{L^3}^3+\| \langle k\rangle ^{s-\frac{1}{6}}\hat{f}\| _{\ell ^\infty}^3\lesssim \| f\| _{H^s}^3.
\end{equation*}

The proof of \eqref{est:G'} is exactly the same as that of \eqref{est:G}.

To show \eqref{est:H'1} and \eqref{est:H'2}, we exploit the estimate
\[ | \Phi (k_1,k_2,k_3)| ^{-1}\lesssim \bigl [ \max_{1\le j\le 3}\langle k_j \rangle \bigr ] ^{-2}\Bigl ( \langle k_1+k_2 \rangle ^{-1}+\langle k_1+k_3 \rangle ^{-1}+\langle k_2+k_3 \rangle ^{-1}\Bigr ) ,\]
which is valid in $D_2(k)$ and improves \eqref{est:D2}.

A similar argument to the previous one for \eqref{est:H} then reduces the proof of \eqref{est:H'1} to showing
\begin{equation}\label{est:H'a}
\| \sum _{k_1+k_2+k_3=k}\frac{f_1(k_1)f_2(k_2)f_3(k_3)}{\langle k_1+k_2\rangle}\| _{\ell ^\infty}\lesssim \| f_1\| _{\ell ^2}\| \langle k\rangle ^{0+}f_2\| _{\ell ^2}\| f_3\| _{\ell ^\infty}.
\end{equation}
It can be shown by the H\"oder inequality as follows:
\begin{align*}
&\| \sum _{k_1+k_2+k_3=k}\frac{f_1(k_1)f_2(k_2)f_3(k_3)}{\langle k_1+k_2\rangle}\| _{\ell ^\infty}\\
&\le \| f_3\| _{\ell ^\infty}\Bigl ( \sum _{k_1,k_2\in \mathbf{Z}}\frac{|f_1(k_1)|^2}{\langle k_1+k_2\rangle ^{1-}\langle k_2\rangle ^{0+}}\Bigr )^{\frac{1}{2}} \Bigl ( \sum _{k_1,k_2\in \mathbf{Z}}\frac{\langle k_2\rangle ^{0+}|f_2(k_2)|^2}{\langle k_1+k_2\rangle ^{1+}}\Bigr )^{\frac{1}{2}}\\
&\lesssim \| f_1\| _{\ell ^2}\| \langle k\rangle ^{0+}f_2\| _{\ell ^2}\| f_3\| _{\ell ^\infty}.
\end{align*}

For the proof of \eqref{est:H'2}, we may focus on estimating
\[ \sup _{k\in \mathbf{Z}}\; \langle k\rangle ^{s+1}\!\!\sum _{\begin{smallmatrix} D_2(k)\setminus D_{2,3}(k)\\ |k_1|\ge |k_2|\end{smallmatrix}}\frac{\langle k_1 \rangle}{|\Phi |}|\hat{w}(k_1)||\hat{w}(-k_2)||\partial_t \hat{w}(k_3)| \] 
without loss of generality.
If $|\Phi |\gtrsim \langle k_1\rangle ^2\langle k_1+k_2 \rangle$, the desired estimate follows from \eqref{est:H'a} and \eqref{est:w_t}.
When $|\Phi |\gtrsim \langle k_1\rangle ^2\langle k_2+k_3 \rangle$, it suffices to apply the following estimates:
\begin{align*}
\| \sum _{k_1+k_2+k_3=k}\frac{f_1(k_1)f_2(k_2)f_3(k_3)}{\langle k_2+k_3\rangle}\| _{\ell ^\infty}\le &\; \| f_3\| _{\ell ^\infty}\sup _{k\in \mathbf{Z}}\sum _{k_1,k_2\in \mathbf{Z}}\frac{|f_1(k_1)|}{\langle k-k_1\rangle}|f_2(k_2)|\\
\lesssim &\; \| f_1\| _{\ell ^2}\| \langle k\rangle ^{\frac{1}{2}+}f_2\| _{\ell ^2}\| f_3\| _{\ell ^\infty}.
\end{align*}
Finally, in the case where $|\Phi |\gtrsim \langle k_1\rangle ^2\langle k_1+k_3 \rangle$, we notice that
\[ \frac{\langle k\rangle ^{s+1}\langle k_1 \rangle}{|\Phi |}\lesssim \frac{\langle k\rangle ^{\frac{1}{2}+}\langle k_1 \rangle ^{s-\frac{1}{2}-}}{\langle k_1+k_3\rangle}
\le \frac{\langle k_1\rangle ^{s-\frac{1}{2}-}}{\langle k_1+k_3\rangle}\Bigl ( \langle k_2\rangle ^{\frac{1}{2}+}+\langle k_1+k_3\rangle ^{\frac{1}{2}+}\Bigr ) . \]
Hence, the following estimates imply the claim:
\begin{align*}
&\| \sum _{k_1+k_2+k_3=k}\frac{\langle k_1\rangle ^{s-\frac{1}{2}-}f_1(k_1)\langle k_2\rangle ^{\frac{1}{2}+}f_2(k_2)f_3(k_3)}{\langle k_1+k_3\rangle}\| _{\ell ^\infty}\\
&\quad \le \| f_3\| _{\ell ^\infty}\sup _{k\in \mathbf{Z}}\sum _{k_1,k_2\in \mathbf{Z}}\langle k_1\rangle ^{s-\frac{1}{2}-}|f_1(k_1)|\frac{\langle k_2\rangle ^{\frac{1}{2}+}|f_2(k_2)|}{\langle k-k_2\rangle}\\
&\quad \lesssim \| \langle k\rangle ^{s}f_1\| _{\ell ^2}\| \langle k\rangle ^{\frac{1}{2}+}f_2\| _{\ell ^2}\| f_3\| _{\ell ^\infty},\\
&\| \sum _{k_1+k_2+k_3=k}\frac{\langle k_1\rangle ^{s-\frac{1}{2}-}f_1(k_1)f_2(k_2)f_3(k_3)}{\langle k_1+k_3\rangle ^{\frac{1}{2}-}}\| _{\ell ^\infty}\\
&\quad \le \| f_3\| _{\ell ^\infty}\sup _{k\in \mathbf{Z}}\sum _{k_1,k_2\in \mathbf{Z}}\langle k_1\rangle ^{s-\frac{1}{2}-}|f_1(k_1)|\frac{|f_2(k_2)|}{\langle k-k_2\rangle ^{\frac{1}{2}-}}\\
&\quad \lesssim \| \langle k\rangle ^{s}f_1\| _{\ell ^2}\| \langle k\rangle ^{0+}f_2\| _{\ell ^2}\| f_3\| _{\ell ^\infty}.
\end{align*}

This completes the proof.
\end{proof}

By means of Lemma~\ref{lem:multilinear'}, interpolation, and Lemma~\ref{lem:L2law}, we deduce from \eqref{est:hat-w'} that
\begin{equation}\label{est:hat-w'2}
\begin{split}
|\hat{u}_0(k)|\le C\Big[ &\;\frac{e^{-\frac{\Gamma}{4\pi}\| u_0\| _{L^2}^2kT}}{\langle k\rangle ^{s_1}}E_{s_1}^+(T)+\frac{\| u_0\| _{L^2}^{2-\frac{1}{2{s_1}}-}}{\langle k \rangle ^{{s_1}+1}}E_{s_1}^+(T)^{1+\frac{1}{2{s_1}}+}\\
&+\frac{\| u_0\| _{L^2}^{2-\frac{2}{{s_1}}-}}{\langle k \rangle ^{{s_1}+2}}E_{s_1}^+(T)^{1+\frac{2}{{s_1}}+}+\frac{1}{\| u_0\|_{L^2}^2\langle k \rangle ^{3{s_1}-\frac{1}{2}}}E_{s_1}^+(T)^3 \Big] 
\end{split}
\end{equation}
for any $k$ satisfying \eqref{cond:k}, where the constant $C>0$ depends only on the parameters in the equation and ${s_1}$.

\begin{remark}\label{rem:infl}
We observe that the first term on the right-hand side of \eqref{est:hat-w'2} has an arbitrary decay in $k$ as long as, say, $\| u_0\| _{L^2}^2kT\gtrsim k^{\frac{1}{4}}$, while the last two terms decay faster than $k^{-s_1-1}$.
To deduce norm inflation for $s=s_1+1$, we need to make the second term (which comes from the ``high$\times$low$\to$high'' type nonlinear interactions $\tilde{H}_1$) also decay faster than $k^{-s_1-1}$.
This forces us to choose initial data $u_0=u_{0,k}$ so that $\| u_{0,k}\|_{L^2}$ decays as $k\to \infty$.
That is why it is difficult to treat the case of non-zero $u^*(0)$ in the same way as Theorem~\ref{infl+}.
Even if we consider the equation for the difference $u-u^*$, there would remain a term with $\sup _t\| u(t)-u^*(t)\| _{H^{s_1}}$ (and without $\| u(0)-u^*(0)\| _{L^2}$), which would have no extra decay in $k$.
It might be possible to overcome this difficulty by applying an integration by parts once more to $\tilde{H}_1$.
\end{remark}

Now, we give a proof of Theorem~\ref{infl}.

\begin{proof}[Proof of Theorem~\ref{infl}]
Let $s,s_1$ satisfy \eqref{cond:infl}. 
We define the analytic function $\psi _{\varepsilon ,\tau}$ by
\begin{equation}\label{def:u_0n}
\psi _{\varepsilon ,\tau}(x):=\langle k_0\rangle ^{-\sigma (s)}+\frac{\varepsilon}{\sqrt{4\pi}} \langle k_0\rangle ^{-s}e^{ik_0x},\quad \sigma (s):=
\begin{cases}
\frac{2}{3}s-\frac{1}{2} &\text{if $1\le s\le \frac{5}{4}$,}\\
\frac{1}{3} &\text{if $s>\frac{5}{4}$,}
\end{cases}
\end{equation}
where $k_0$ is a large positive frequency to be chosen later.
Note that $\| \psi _{\varepsilon ,\tau}\| _{L^2}\sim \langle k_0\rangle ^{-\sigma (s)}$, and that $\| \psi _{\varepsilon ,\tau}\| _{H^s}\le \varepsilon$ if $k_0$ is sufficiently large.

Assume that there exists a solution $u\in C([0,\tau ];H^{s_1}(\mathbf{T}))$ to \eqref{3NLSR} with initial condition $u(0)=\psi _{\varepsilon ,\tau}$.
We need to show $E_{s_1}^+(\tau )\ge \varepsilon ^{-1}$ if $k_0$ is chosen appropriately.

Suppose that $E_{s_1}^+(\tau )<\varepsilon ^{-1}$.
We see that the condition \eqref{cond:k} (with $T$ replaced by $\tau$) is fulfilled if $k_0^{\frac{1}{3}}\ge C\varepsilon ^{-2}$. 
Then, from \eqref{est:hat-w'2}, at least one of the following four conditions holds:
\begin{align}
\varepsilon \langle k_0\rangle ^{-s}&\le Ce^{-C'\langle k_0\rangle ^{1-2\sigma (s)}\tau}\langle k_0\rangle ^{-s_1}E_{s_1}^+(\tau ),\label{cond:a1}\\
\varepsilon \langle k_0\rangle ^{-s}&\le C\langle k_0 \rangle ^{-\big[ s_1+1+\sigma (s)\big( 2-\frac{1}{2s_1}\big) \big] +}E_{s_1}^+(\tau )^{1+\frac{1}{2{s_1}}+},\label{cond:a2}\\
\varepsilon \langle k_0\rangle ^{-s}&\le C\langle k_0 \rangle ^{-\big[ s_1+2+\sigma (s)\big( 2-\frac{2}{s_1}\big) \big] +}E_{s_1}^+(\tau )^{1+\frac{2}{{s_1}}+},\label{cond:a3}\\
\varepsilon \langle k_0\rangle ^{-s}&\le C\langle k_0 \rangle ^{-\big[ 3s_1-\frac{1}{2}-2\sigma (s)\big]}E_{s_1}^+(\tau )^3.\label{cond:a4} 
\end{align}

If \eqref{cond:a1} holds, we have
\[ \varepsilon \langle k_0\rangle ^{-s}\le C\langle k_0\rangle ^{-5(1-2\sigma (s))-s_1}\tau ^{-5}E_{s_1}^+(\tau ).\]
Since $s<5(1-2\sigma (s))+s_1$ under the condition \eqref{cond:infl}, we can take $k_0$ sufficiently large so that
\[ \varepsilon \langle k_0\rangle ^{-s}\ge C\langle k_0\rangle ^{-5(1-2\sigma (s))-s_1}\tau ^{-5}\varepsilon^{-1}.\]
For such $k_0$ it must hold that $E_{s_1}^+(\tau )\ge \varepsilon^{-1}$.
The same argument can be applied to the other cases \eqref{cond:a2}--\eqref{cond:a4}; it suffices to check that
\[ s<\min \Big\{ s_1+1+\sigma (s)\big( 2-\frac{1}{2s_1}\big) ,\,s_1+2+\sigma (s)\big( 2-\frac{2}{s_1}\big) ,\,3s_1-\frac{1}{2}-2\sigma (s)\Big\} \]
under the condition \eqref{cond:infl}, which is easy to show.
Hence, we have $E_{s_1}^+(\tau )\ge \varepsilon^{-1}$ for any sufficiently large $k_0$, which contradicts our hypothesis.

This concludes the proof.
\end{proof}

%%%%%%%%%%%%%%%%%%%%%%%%%%%%%%%%%%%%%%%%%%%%%
%
%     Analytic lwp
%
%%%%%%%%%%%%%%%%%%%%%%%%%%%%%%%%%%%%%%%%%%%%%

\section{Existence of analytic solutions}

In this section, we show the unique local solvability of the Cauchy problem \eqref{3NLSR}-\eqref{ic} in the analytic function space.
We begin with the definition of the function space with which we work.

\begin{definition}\label{def:analytic}
For $r>0$, we define a Banach space $\mathcal{A}(r)$ by
\begin{equation*}
\mathcal{A}(r):=\bigl \{ f\in L^2 (\mathbf{T})\,\big| \, \| f\| _{\mathcal{A}(r)}:=\| e^{r|k|}\hat{f}(k)\| _{\ell ^1(\mathbf{Z})}<\infty \bigr \} .
\end{equation*}
\end{definition}

\begin{remark}
The function space $\mathcal{A}(r)$ was introduced by Ukai \cite [norm (2.6) and Definition 2.2 on page 143] {Uk} for the Boltzmann equation, by Kato and Masuda \cite [the definition of $A(r)$ on page 459] {KM} for a class of nonlinear evolution equations and by Foias and Temam \cite [(1.10) on page 361] {FT} for the incompressible Navier-Stokes equations.
Functions in $\mathcal{A}(r)$ are real analytic and have analytic extensions on the strip $\{ z\in \mathbf{C}|\,|\Im z|<r\}$ (see, e.g., \cite [Exercise 4.4 in Exercises for Section 4 on page 28] {Katz}).
In fact, for any $f\in \mathcal{A}(r)$ and positive integer $n$, we see that
\begin{align*}
\| \partial_x^n f\| _{L^\infty (\mathbf{T})}
&\lesssim \| |k|^n\hat{f}(k) \| _{\ell ^1(\mathbf{Z})}\le \| f\| _{\mathcal{A}(r)}\sup _{k\in \mathbf{Z}}|k|^ne^{-r|k|}\\
&=\| f\| _{\mathcal{A}(r)}\Bigl ( \frac{n}{r}\Bigr ) ^{n}\sup _{k\in \mathbf{Z}}\Bigl ( \frac{r}{n}|k|e^{-\frac{r}{n}|k|}\Bigr ) ^{n}\le  \| f\| _{\mathcal{A}(r)}\Bigl ( \frac{1}{r}\Bigr ) ^nn!,
\end{align*}
where at the last inequality we have used $n^n<n!e^{n}$ and $\sup _{\xi \ge 0}\xi e^{-\xi}=e^{-1}$.
\end{remark}

\begin{proposition}\label{prop:analytic}
Let $\alpha_j$, $j= 1, 2$ be two real numbers and let $r>0$.
For any $u_0\in \mathcal{A}(r)$, there exist $T>0$ such that the Cauchy problem \eqref{3NLSR}--\eqref{ic} has a unique solution $u\in C([-T,T];\mathcal{A}(r/2))$ on $(-T, T)$.
Moreover, $T$ can be chosen as
\begin{equation*}
T\gtrsim \min \{ 1,r\} \| u_0\| _{\mathcal{A}(r)}^{-2},
\end{equation*}
where the implicit constant does not depend on $r$ and $u_0$.
\end{proposition}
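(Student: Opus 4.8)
The plan is to work directly with the system \eqref{hateq} for the Fourier coefficients and to exploit two elementary features of the weight $e^{r|k|}$. First, since $|k|\le |k_1|+|k_2|+|k_3|$ whenever $k=k_1+k_2+k_3$, we have $e^{\rho|k|}\le e^{\rho|k_1|}e^{\rho|k_2|}e^{\rho|k_3|}$, so that $\mathcal{A}(\rho)$ is a Banach algebra (up to a harmless constant) for every $\rho\in(0,r]$; together with $|i\gamma_1+i\gamma_2 k+\Gamma(k_1+k_2)|\lesssim \langle k_1\rangle+\langle k_2\rangle+\langle k_3\rangle$, this shows that the cubic sum $\hat{N}[u](k)$ on the right-hand side of \eqref{hateq} loses exactly one derivative. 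Second, and crucially, the linear symbol $\omega(k):=\alpha_1 k^3+\alpha_2 k^2$ is real, so the dispersive term contributes nothing to the growth of the $\ell^1$-type norm: writing $\partial_t\hat{u}(k)=-i\omega(k)\hat{u}(k)+\hat{N}[u](k)$ and using $\Re\big(\overline{\hat{u}}\,(-i\omega\hat{u})\big)=0$, one obtains the pointwise bound $\frac{d}{dt}|\hat{u}(t,k)|\le |\hat{N}[u](t,k)|$ for every $k$. (Note that the nonresonance relation \eqref{est:Phi} plays no role here, which is why the statement holds for arbitrary real $\alpha_1,\alpha_2$.)

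Building on this, I would introduce a time-dependent radius $\rho(t):=r-\beta|t|$ and control $\mathcal{N}(t):=\|u(t)\|_{\mathcal{A}(\rho(t))}=\sum_k e^{\rho(t)|k|}|\hat{u}(t,k)|$. Differentiating and inserting the three facts above gives, for $t>0$,
\[
\frac{d}{dt}\mathcal{N}(t)\le -\beta\mathcal{D}(t)+\frac{C}{2\pi}\big(\mathcal{N}(t)+\mathcal{D}(t)\big)\mathcal{N}(t)^2,
\]
where $\mathcal{D}(t):=\sum_k|k|e^{\rho(t)|k|}|\hat{u}(t,k)|$ carries the lost derivative and the favourable term $-\beta\mathcal{D}$ comes from $\dot\rho=-\beta<0$. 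The key observation is that the derivative-losing contribution $\frac{C}{2\pi}\mathcal{D}\mathcal{N}^2$ is absorbed by $-\beta\mathcal{D}$ as soon as $\frac{C}{2\pi}\mathcal{N}(t)^2\le\beta$, leaving the closed inequality $\frac{d}{dt}\mathcal{N}\le\frac{C}{2\pi}\mathcal{N}^3$. With $A:=\|u_0\|_{\mathcal{A}(r)}$ I would set $\beta:=\frac{C}{2\pi}(2A)^2$ and run a continuity (bootstrap) argument: as long as $\mathcal{N}\le 2A$ the absorption is valid and $\mathcal{N}(t)\le A+\frac{C}{2\pi}(2A)^3 t<2A$ for $t<\frac{\pi}{4CA^2}$, so the bound $\mathcal{N}\le 2A$ persists. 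This is maintained up to the time where $\rho(t)=r/2$, i.e. $\beta t=r/2$; taking $T$ to be the smaller of the two thresholds yields $T\gtrsim\min\{1,r\}\,\|u_0\|_{\mathcal{A}(r)}^{-2}$ together with the a priori bound $\sup_{|t|\le T}\|u(t)\|_{\mathcal{A}(r/2)}\le 2A$.

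To convert this estimate into an existence proof I would regularize by frequency truncation: for $N\in\mathbf{N}$ let $u^N$ solve the finite-dimensional ODE obtained by projecting \eqref{hateq} onto $\{|k|\le N\}$. Each $u^N$ exists locally, and since the three facts above survive the projection, it obeys the same differential inequality; hence $\sup_{|t|\le T}\|u^N(t)\|_{\mathcal{A}(r/2)}\le 2A$ uniformly in $N$ on the common interval $[-T,T]$. A difference estimate of the same type—writing the equation for $u^N-u^M$, bounding its coefficients by the uniform bound, and once more absorbing the single lost derivative into the shrinking radius—shows that $\{u^N\}$ is Cauchy in $C([-T,T];\mathcal{A}(r/2))$, and its limit $u$ solves \eqref{3NLSR}--\eqref{ic}. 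Uniqueness follows by the identical absorption argument applied to the difference of two solutions, with radius decreasing from $r/2$ to $r/4$, combined with Gronwall's inequality and iteration in time; continuity in time comes from the equation and the uniform bounds. The negative-time statement is already contained in the choice $\rho(t)=r-\beta|t|$ (alternatively it follows from Remark~\ref{remsol}~(a)).

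The main obstacle is the rigorous justification of the differential inequality for $\mathcal{N}(t)$, i.e. differentiating the $\ell^1$-type norm and validating the absorption. The clean resolution is precisely the frequency truncation above, which reduces $\mathcal{N}$ to a finite sum of $C^1$ functions; one may then use the one-sided (Dini) bound $D^+|\hat{u}^N(t,k)|\le|\hat{N}[u^N](t,k)|$ termwise and sum, and the continuity argument upgrades the conditional inequality $\frac{d}{dt}\mathcal{N}\le\frac{C}{2\pi}\mathcal{N}^3$ into the persistent bound $\mathcal{N}\le 2A$. Beyond this bookkeeping I expect no essential difficulty, since only the reality of $\omega(k)$, the algebra property of $\mathcal{A}(\rho)$, and the single-derivative loss are used.
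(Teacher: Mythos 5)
Your argument is correct in substance, but it takes a genuinely different route from the paper. The paper proves Proposition~\ref{prop:analytic} by a contraction mapping on the Duhamel formulation \eqref{IntEq}: it works in the Banach space $B_{r,T}$ whose norm $|\!|\!|u|\!|\!|_{r,T}$ carries the time-dependent weight $e^{r(1-\frac{|t|}{2T})|k|}$, and the one lost derivative is absorbed through the elementary integral bound $\bigl |\int_0^t |k|\,e^{-\frac{r}{2T}(|t|-|t'|)|k|}\,dt'\bigr |\lesssim T/r$, which directly yields the smallness condition $T\lesssim \min\{1,r\}\|u_0\|_{\mathcal{A}(r)}^{-2}$ for the contraction. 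You instead run a Cauchy--Kovalevskaya-type energy argument on \eqref{hateq}: the same linear shrinkage of the radius, $\rho(t)=r-\beta|t|$ with $\beta\sim\|u_0\|_{\mathcal{A}(r)}^2$, produces the dissipative term $-\beta\mathcal{D}$ that absorbs the derivative loss at the level of a differential inequality, with Galerkin truncation and Dini derivatives supplying the rigor. The two mechanisms are the same (linear radius shrinkage at rate $\sim r/T$ beats a single derivative, giving the identical time scale), so both yield the stated lower bound on $T$. What each buys: the paper's fixed-point argument gives uniqueness within the iteration space and Lipschitz continuity of the data-to-solution map for free (the paper remarks on this after the proof), and sidesteps the technicalities of differentiating an $\ell^1$-type norm; your approach is more robust (it does not require the Duhamel map to be a contraction and survives, e.g., quasilinear perturbations) at the cost of the approximation/compactness bookkeeping you describe. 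Two small points deserve care in your write-up. First, your bootstrap gives $u^N$ bounded in $\mathcal{A}(\rho(t))$ with $\rho(T)=r/2$ exactly, so the Cauchy property and endpoint continuity in $C([-T,T];\mathcal{A}(r/2))$ are cleanest if you shrink $T$ by a fixed factor so that $\rho(t)>r/2$ with a uniform gap on $[-T,T]$; this changes nothing in the final bound. Second, in the uniqueness step for two solutions known only in $C([-T,T];\mathcal{A}(r/2))$, the radius gap between the solutions and the difference norm vanishes at $t=0$, so when the derivative lands on a solution factor the Gronwall coefficient degenerates like $1/|t|$, which is not integrable; for the Galerkin differences this is harmless (there you have the uniform gap $\rho(t)-\rho''(t)\ge\delta$ from the a priori bound), but for uniqueness proper you need an extra device, e.g.\ first propagating the shrinking-radius bound for an arbitrary $C([-T,T];\mathcal{A}(r/2))$ solution. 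The paper declares uniqueness standard and omits it, so your sketch is at a comparable level of detail, but this is the one step where your absorption argument does not apply verbatim.
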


\begin{remark}
We do not have to assume \eqref{h1} in Proposition \ref{prop:analytic}.
Even when $\alpha_1= \alpha_2 = 0$, Proposition \ref{prop:analytic} holds.
\end{remark}

\begin{proof}
We will construct a solution by a fixed point argument on the associated integral equation
\begin{align}
u(t)=&\;U(t)u_0+\int _0^t U(t-t')\bigl [i\gamma_1 |u|^2u+\gamma_2 \partial_x (|u|^2u)-i\Gamma u\partial_x (|u|^2)\bigr ] (t')\,dt'\label{IntEq}\\
=:&\; \Psi [u_0](u)(t),\qquad t\in [-T ,T],\notag
\end{align}
where $U(t):=e^{t(\alpha_1 \partial_x^3 +i\alpha_2 \partial_x^2)}$.
We shall show that for $u_0\in \mathcal{A}(r)$, $\Psi [u_0]$ is a contraction on 
\begin{align}
B_{r,T}:=\Big\{ u\in C([-T,T]; \mathcal{A}(r/2))\, \Big| \  &u(t) \in \mathcal{A}\bigl (1- \frac {|t|} {2T} \bigr ), \ t \in [-T, T],  \notag \\
&|\!|\!| u |\!|\!| _{r,T}\le 2\| u_0\| _{\mathcal{A}(r)} \Bigr \} ,  \label{afsp} \\
|\!|\!| u |\!|\!| _{r,T}:=\big\| \sup _{|t|\le T}e^{r(1-\frac{|t|}{2T})|k|}|\hat{u}(t,k)|&\big\| _{\ell ^1(\mathbf{Z})} \notag
\end{align}
for suitable $T>0$.
Note that $\sup _{|t|\le T}\| u(t)\| _{\mathcal{A}(r/2)}\le |\!|\!| u|\!|\!| _{r,T}$.

Clearly, we have
\[ |\!|\!| U(t)u_0 |\!|\!| _{r,T}=\| u_0\| _{\mathcal{A}(r)}.\]
Next, we notice that 
\begin{align*}
&|\!|\!| \int _0^tU(t-t')[u_1\bar{u}_2u_3](t')\,dt'|\!|\!| _{r,T}\\
&\lesssim \big\| \sum _{k_1+k_2+k_3=k}\sup _{|t|\le T}e^{r(1-\frac{|t|}{2T})|k|}\int _0^t|\hat{u}_1(t',k_1)\hat{\bar{u}}_2(t',k_2)\hat{u}(t',k_3)|\,dt'\big\| _{\ell ^1}\\
&\le T \big\| \sum _{k_1+k_2+k_3=k}\sup _{|t'|\le T}e^{r(1-\frac{|t'|}{2T})(|k_1|+|k_2|+|k_3|)}|\hat{u}_1(t',k_1)\hat{\bar{u}}_2(t',k_2)\hat{u}(t',k_3)|\big\| _{\ell ^1}\\
&\lesssim T\prod _{j=1}^3|\!|\!| u_j|\!|\!| _{r,T}.
\end{align*}

For nonlinear terms with derivative, we observe that
\begin{align*}
e^{r(1-\frac{|t|}{2T})|k|}|k|\le &\; |k|e^{-\frac{r}{2T}(|t|-|t'|)|k|}\prod _{j=1}^3e^{r(1-\frac{|t'|}{2T})|k_j|},\\
e^{r(1-\frac{|t|}{2T})|k|}|k_1+k_2|\le &\; e^{r(1-\frac{|t|}{2T})|k_1+k_2|}|k_1+k_2|\cdot e^{r(1-\frac{|t'|}{2T})|k_3|}\\
\le &\; |k_1+k_2|e^{-\frac{r}{2T}(|t|-|t'|)|k_1+k_2|}\prod _{j=1}^3e^{r(1-\frac{|t'|}{2T})|k_j|}
\end{align*}
for $k=k_1+k_2+k_3$ and $0\le |t'|\le |t|\le T$.
Since a simple computation yields
\[
   \Bigl | \int_0^t |k| e^{-\frac r {2T} (|t| - |t'|) |k|} \ dt' \Bigr | \lesssim \frac T r,
\]
we thus obtain that
\begin{align*}
&|\!|\!| \int _0^tU(t-t')\bigl [\partial_x (u_1\bar{u}_2u_3)\bigr ] (t')\,dt'|\!|\!| _{r,T}\\
&\lesssim \big\| \sum _{k_1+k_2+k_3=k}\sup _{|t|\le T}\int _0^t |k|e^{-\frac{r}{2T}(|t|-|t'|)|k|}e^{r(1-\frac{|t'|}{2T})\sum _j|k_j|}\\[-10pt]
&\hspace{180pt} \times |\hat{u}_1(t',k_1)\hat{\bar{u}}_2(t',k_2)\hat{u}(t',k_3)|\,dt' \big\| _{\ell ^1}\\
&\lesssim \frac{T}{r}\big\| \sum _{k_1+k_2+k_3=k}\sup _{|t'|\le T}e^{r(1-\frac{|t'|}{2T})(|k_1|+|k_2|+|k_3|)}|\hat{u}_1(t',k_1)\hat{\bar{u}}_2(t',k_2)\hat{u}(t',k_3)|\big\| _{\ell ^1}\\
&\lesssim \frac{T}{r}\prod _{j=1}^3|\!|\!| u_j|\!|\!| _{r,T},
\end{align*}
and similarly, 
\[ |\!|\!| \int _0^tU(t-t')\bigl [u_3\partial_x (u_1\bar{u}_2)\bigr ] (t')\,dt'|\!|\!| _{r,T}\lesssim \frac{T}{r}\prod _{j=1}^3|\!|\!| u_j|\!|\!| _{r,T}.\]
Therefore, we have
\begin{align*}
|\!|\!| \Psi [u_0](u)|\!|\!| _{r,T}&\le \| u_0\| _{\mathcal{A}(r)}+CT(1+r^{-1})|\!|\!| u|\!|\!| _{r,T}^3,\\
|\!|\!| \Psi [u_0](u)-\Psi [u_0](v)|\!|\!| _{r,T}&\le CT(1+r^{-1})\bigl ( |\!|\!| u|\!|\!| _{r,T}^2+|\!|\!| v|\!|\!| _{r,T}^2\bigr ) |\!|\!| u-v|\!|\!| _{r,T}.
\end{align*}
Furthermore, it is easy to show that $\Psi [u_0](u)\in C([-T,T];\mathcal{A}(r/2))$ for $u_0\in \mathcal{A}(r)$ and $u\in B_{r,T}$.
Hence, $\Psi [u_0]$ is a contraction on $B_{r,T}$ if
\[ T\le c\min \{ 1,r\} \| u_0\| _{\mathcal{A}(r)}^{-2}\]
for some small constant $c>0$.
By Banach's fixed point theorem, we obtain a solution $u\in C([-T,T];\mathcal{A}(r/2))$ to the integral equation \eqref{IntEq}, which clearly solves \eqref{3NLSR}--\eqref{ic}.
Now the proof for the uniqueness of solutions is standard and so we omit it.
\end{proof}

\begin{remark}
(a) Even when the initial datum is a Gaussian pulse, it is open whether the solution given by Proposition \ref{prop:analytic} exists globally in time or not.
\par (b) The proof of Proposition \ref{prop:analytic} is based on the contraction mapping principle, which yields the continuous dependence of solutions on initial data in a sense.
This implies that the solution map is Lipschitz continuous from $u_0 \in \mathcal{A}(r)$ to $u \in B_{r,T}$, where $B_{r,T}$ is defined as in \eqref{afsp}.
\end{remark}

\begin{example}
Consider as initial data the rescaled periodic Gaussian $g_\lambda$ ($\lambda >0$) defined by
\[ \hat{g}_\lambda (k):=\lambda e^{-\lambda ^2k^2},\qquad k\in \mathbf{Z}.\]
We choose $r=\lambda$ and estimate the $\mathcal{A}(\lambda )$-norm of $g_\lambda$ as
\[ \| g_\lambda \| _{\mathcal{A}(\lambda )}\lesssim \int _0^\infty \lambda e^{-\lambda ^2\xi ^2+\lambda \xi}\,d\xi +\sup _{\xi \ge 0}\lambda e^{-\lambda ^2\xi ^2+\lambda \xi}\lesssim 1+\lambda .\]
Proposition~\ref{prop:analytic} then shows that if $0<\lambda \lesssim 1$, the corresponding solution $u_\lambda$ to \eqref{3NLSR} exists on $(-T_\lambda ,T_\lambda )$ with
\begin{equation}
    T_\lambda \gtrsim \lambda. \label{span}
\end{equation}
In most literature, numerical computations are carried out for five to ten times as long a period of time as the dispersion length (see, e.g., \cite[Figure 4.23 on page 112]{Agr}).
When  $\alpha_1 = 0$ and the initial datum is the rescaled periodic Gaussian pulse defined as above, the dispersion length $L_D$ is defined as $L_D = \lambda^2 / |\alpha_2|$ (see \cite[(4.4.2) in Section 4.4]{Agr}).
From \eqref{span}, it is presumed that the numerical solution for the ill-posed Cauchy problem \eqref{3NLSR}-\eqref{ic} may approximate the analytic solution given by Proposition \ref{prop:analytic} for as long a period of time as the length of  $\lambda = \frac {|\alpha_2|} {\lambda} L_D$.
We note that if $\lambda$ is small and $|\alpha_2| \sim 1$, this time range may be able to cover the period of time for which the numerical simulations are carried out in previous literature.
\end{example}

\noindent {\bf Concluding Remark}.
In \cite{Tsug}, Tsugawa introduced the notion of \lq\lq parabolic resonance'', by which some nonlinear terms could yield the smoothing type effect either forward or backward in time.
This might be applicable to nonlinear Schr\"odinger equations on the one dimensional torus, which leads to the ill-posedness.
But his proof is different from ours because our estimates are mainly done in the Fourier space while his proof proceeds in the $x$ variable space.

\section*{Acknowledgements}

The authors would like to thank the referee for suggesting that Hadamard's ill-posedness proof for the Cauchy problem of the Laplace equation should be applicable to the third order NLS with Raman scattering term.
In particular, the referee's suggestions enabled them to improve the ill-posedness results for solutions existing both forward and backward in the previous manuscript of this paper.
They are also grateful to Dr.~Tomoyuki Miyaji for fruitful discussions on the third order NLS with Raman scattering term and for showing them his interesting numerical simulations, which were helpful for their study of the ill-posedness.
The first author N.K is partially supported by JSPS KAKENHI Grant-in-Aid for Young Researchers (B) (16K17626).
The second author Y.T is partially supported by JSPS KAKENHI Grant-in-Aid for Scientific Research (B) (17H02853) and Grant-in-Aid for Exploratory Research (16K13770).

% \bibliographystyle{mrl}
% \bibliography{bibl-arxiv}

% \noindent
% \address{Research Institute for Mathematical Sciences, Kyoto University \\
%   Kitashirakawa Oiwake-cho, Sakyo-ku, Kyoto 606-8502, Japan \\
%   \email{nobu@kurims.kyoto-u.ac.jp}
%   }

% \bigskip
% \noindent
% \address{Department of Mathematics, Graduate School of Science, Kyoto University \\
%   Kitashirakawa Oiwake-cho, Sakyo-ku, Kyoto 606-8502, Japan \\
%   \email{tsutsumi@math.kyoto-u.ac.jp}
%   }

\end{document}